\documentclass[11pt]{amsart}

\usepackage[margin=1in]{geometry}
\usepackage[
colorlinks=true, citecolor=blue]{hyperref}
\usepackage{amsmath, amssymb, amsthm, esint,  mathtools, mathrsfs, bbm}
\usepackage{enumerate, subcaption, graphicx}
\usepackage{xcolor}
\usepackage[T1]{fontenc}
\usepackage[utf8]{inputenc}
\usepackage{geometry}
\usepackage{graphicx}
\usepackage{enumitem}
\usepackage{lmodern}
\usepackage{cancel}
\usepackage{listings}%
\usepackage{booktabs}%
\usepackage{algorithm}%
\usepackage{algorithmicx}%
\usepackage{algpseudocode}
\usepackage{textcomp}%

\usepackage[normalem]{ulem}
\newcommand{\cred}{\color{red}}

\newcommand{\bu}{\boldsymbol{u}}
\newcommand{\bw}{\boldsymbol{w}}
\newcommand{\bv}{\boldsymbol{v}}
\newcommand{\bH}{{\bf H}}
\newcommand{\bq}{{\bf q}}

\newcommand{\bL}{{\bf L}}
\newcommand{\bn}{{\boldsymbol{n}}}
\newcommand{\bftau}{\boldsymbol{\tau}}
\newcommand{\bfphi}{\boldsymbol{\phi}}

\newcommand{\bfeta}{\boldsymbol{\eta}}
\newcommand{\bfxi}{\boldsymbol{\xi}}
\newcommand{\bfzeta}{\boldsymbol{\zeta}}
\newcommand{\bA}{{\bf A}}

\newcommand{\sO}{\mathcal{O}}
\newcommand{\Pin}{{P}_{in}}
\newcommand{\Pout}{{P}_{out}}
\newcommand{\Pinout}{{P}_{in/out}}
\newcommand{\sB}{\mathcal{B}}

\newcommand{\blue}[1]{\textcolor{blue}{#1}}
\newcommand{\green}[1]{\textcolor{green}{#1}}
\let\cal\mathcal
\allowdisplaybreaks[1]

\theoremstyle{thmstyleone}%
\newtheorem{theorem}{Theorem}%
\newtheorem{proposition}[theorem]{Proposition}%
\newtheorem{lemma}[theorem]{Lemma}%

\title[Finite-time contact: Navier-slip]{Finite-time contact in fluid-elastic structure interaction: Navier-slip coupling condition}
\author[K. Tawri, N. Ward]{Krutika Tawri$^1$, Nash Ward$^1$} 

\address{	\newline
$^1$ Department of Applied Mathematics, University of Washington, WA, USA.}
	
\email{ ktawri@uw.edu (Krutika Tawri), Nashw1@uw.edu (Nash Ward)}

\date{} 

\begin{document} 
\begin{abstract}
    We consider a fluid-structure interaction problem involving a viscous, incompressible fluid flow, modeled by the 2D Navier-Stokes equations, through a thin deformable elastic tube, displacement of which is not known a priori. The elastodynamics problem is given by 1D plate equations. The fluid and the structure are nonlinearly coupled via the kinematic and dynamic coupling conditions at the fluid-structure interface. The fluid flow is driven by dynamic pressure data imposed at the inlet and outlet of the tube. We impose the Navier-slip boundary condition at the deformable fluid-structure interface and at the bottom rigid boundary of the fluid domain. Hence, beyond the usual geometric nonlinearities arising from nonlinear coupling in FSI with no-slip, the analysis is more challenging due to the possibility of tangential jumps of the fluid and structural velocities at the moving interface. We first discuss the existence of weak solutions and then establish a `hidden' spatial regularity result for the structure displacement.

  Our main result proves the existence of a finite time for weak solutions at which the compliant upper boundary meets the lower boundary (i.e., the tube collapses), provided that there is a sufficient pressure drop across the channel. This resolves the ``no-collision'' paradox identified by Grandmont and Hillairet in the no-slip setting in [{\it Arch. Ration. Mech. Anal., 220(3): 1283-1333, (2016)}], the counterpart to the present work. To the best of our knowledge, this is the first work that rigorously establishes finite-time contact in a fluid-elastic structure interaction system, thereby validating the model to correctly capture near-contact dynamics.
\end{abstract}
\maketitle

\section{Introduction}

In this article, we study the behavior of weak solutions to a fluid-structure interaction model.
The incompressible flow of the viscous fluid is described by the 2D Navier-Stokes equations, while the thin (visco-)elastic membrane is characterized by plate equations. The fluid and the structure are fully coupled across the moving fluid-structure interface through a two-way coupling:
(i) the dynamic coupling condition ensures continuity of  contact forces at the interface, and (ii) the kinematic coupling condition ensures the continuity only of the normal components of their velocities. In other words, we impose a Navier-slip condition at the moving interface, permitting relative motion between the fluid and the structure in directions tangential to the current configuration of the fluid domain. We prescribe dynamic pressure (Bernoulli) conditions at the inlet and the outlet of the elastic tube; we also prescribe a constant transversal flow rate condition {\it only} at the outlet of the channel.
The fluid flow is thus driven by a pressure drop across the channel. We assume that the elastic plate is clamped at both ends. This model, incorporating the slip of the viscous fluid across the structure, was first proposed in \cite{MC16}.

There has been a considerable amount of work done concerning well-posedness analysis of fluid-elastic structure interaction (FSI) over the past two decades, specifically in the setting where no-slip kinematic coupling condition, that ensures the continuity of the fluid and structure velocities at their interface, is considered. More precisely, see \cite{CS06, CS10, KTZ10} and the references therein, for results on local-in-time existence of regular solutions under different settings, and \cite{CDEG, G08, MC13, LR14} for local-in-time existence of weak solutions. While some works address collisions by constructing special global-in-time weak solutions that are continued past collisions (see, for instance, \cite{CGH21} for elastic bodies and \cite{F03, SST02} for rigid bodies), most well-posedness results establish existence only up to the point of collision. In particular, for the aforementioned fluid-elastic structure interaction results, existence is given until the point when the elastic boundary makes contact with the rigid bottom boundary of the fluid domain.

The 2016 article \cite{GH_noslip} gave a global existence result of strong solutions to an FSI problem consisting of a 1D viscoelastic beam and a 2D viscous incompressible fluid, under the no-slip coupling condition, and revealed, for the first time, that contact
of the walls of the compliant tube in finite time is not possible. 
This is the famous `no-contact' paradox, also known as the {\it d'Alembert or Cox-Brenner paradox}. In the no-slip case, the no-contact result given in  \cite{GH_noslip} was extended to 3D fluid-2D structure interaction setting, recently in \cite{BKLM25}, under certain assumptions on the regularity of the structure.

These works highlight the fact that, although the no-slip is a common assumption in the blood flow literature and more broadly in general FSI applications, the slip condition is considered to be a more realistic boundary condition in modeling near-contact dynamics, such as the closure of heart valves, because it allows for the possibility of collisions and self-intersections (see e.g. \cite{NP10}, \cite{MC16}).
Despite this, the Navier-slip coupling condition, in moving boundary problems involving elastic structures, has been far less explored.
The works \cite{MC16, MC19} and \cite{T24} establish the existence of a weak solution for the 2D fluid-1D structure model incorporating the Navier-slip coupling condition at the fluid-structure interface, considered in the present study, both in deterministic and stochastic frameworks, up to the point of tube collapse. 
It should be emphasized that both of the aforementioned studies \cite{MC16, T24} consider a {\it purely elastic} structure (no structural viscosity) and make no assumptions restricting the motion of the structure, allowing deformations in either direction. While there are several difference in the analysis carried out in \cite{MC19} and \cite{T24}, we particularly note that the compactness results established in \cite{MC19} and \cite{T24} rely on different approaches, and point out that in this manuscript, we employ a version of the compactness argument introduced in \cite{T24}.

A substantial body of work explores fluid-{\it rigid} (immersed) body interactions under both slip and no-slip kinematic conditions. Here, we focus only on those studies relevant to the slip case.
The local-in-time existence of weak solutions in 3D was first proven in \cite{GH14}. In \cite{GH_slip}, the authors studied the effect of different kinematic coupling and boundary conditions on contact problems. They proved that when slip boundary conditions are imposed both at the fluid-structure interface and at the bottom boundary of the fluid domain, a smooth (spherical) rigid body immersed in a viscous fluid and subjected to gravity will come into contact with the bottom boundary in finite time. Whereas, if either the fluid-structure interface or the bottom boundary enforces a no-slip condition, the rigid body cannot make contact in finite-time, regardless of the relative densities of the fluid and the structure.
This work builds on previous results regarding the calculation of drag forces experienced by a rigid body in simplified settings considering Stokes flow \cite{S04, GH12}.

While fluid-elastic structure interaction under the Navier-slip condition has historically received limited attention, interest in these models has been growing in recent years, highlighting the need for further investigation. In the recent work \cite{MR26}, the authors study an FSI model with slip in 3D under the assumption that the structure is only allowed to have transversal deformations, and prove the existence of weak solutions until the point of contact. Another recent work \cite{CKS26} considers largely deforming viscoelastic bulk solid immersed in a viscous, incompressible fluid, and also proves the existence of weak solutions (defined two ways) until the point of contact. 
There has also been significant interest in computationally understanding and capturing the contact, no-contact or contactless rebound phenomenon in different FSI settings in the recent years; see e.g. \cite{GSST22, BFFG22}. However, the analysis is at a very early stage.

In this article, due to the imposition of a non-standard inlet-outlet boundary condition, we first provide a brief discussion on the existence of weak solutions for the fluid-structure system with Navier-slip coupling conditions at both the moving interface and the bottom boundary of the fluid domain. The weak solution is defined using fluid-structure joint test functions that allow a jump in the direction tangential to the fluid-structure moving interface.  We also assume that the thin clamped elastic structure deforms only in the direction normal to the reference geometry (which is assumed to be flat). While this condition is {\it not} required for the existence result, we impose this restricted structural deformation condition for the next part of our paper.
More precisely, the fluid flow is supplemented with inlet-outlet dynamic pressure conditions together with a constant flow rate condition at the outlet. 
Such boundary conditions, involving prescribed dynamic pressure data and additionally prescribed transversal flux rate (on partial boundary) have been studied in several previous works; see e.g. \cite{HRT96,S23} and references therein. This construction is based on a penalty method, in which we augment the original weak formulation with a term that penalizes deviations of the outlet total flux from unity. The addition of this penalty term introduces a {singular perturbation} to the original problem, creating challenges in establishing the compactness of the approximate solutions. To address this, we develop compactness techniques tailored to our penalization method and demonstrate that the fluid and structure velocities admit bounded fractional time derivatives, independently of the penalty parameter.

We then present our result on finite-time contact for the weak solutions constructed in the preceding section. Specifically, we show that when a pressure drop is maintained across the elastic channel for a sufficiently long duration, the upper compliant boundary of the fluid domain collapses onto the lower rigid boundary in finite time. To the best of our knowledge, this represents the {\bf first finite-time contact result} in fluid-structure interaction involving {\it elastic} structures

Our approach focuses on constructing an appropriate joint test function for the coupled fluid-structure system. This construction builds on the foundational idea of \cite{GH_noslip}, in which the fluid test function is obtained by approximating the solution of the steady Stokes equations posed on the frozen configuration of the moving domain at each instant in time. However, while \cite{GH_noslip} considers a rigid, smooth body, we work with an elastic structure, which is not necessarily smooth.
This distinction introduces several additional challenges. The treatment of the Navier-slip condition, which is formulated in terms of normal and tangential directions at the evolving interface, becomes significantly more involved in the elastic case, since these directions now evolve in time together with the deformation of the structure. {Moreover, in contrast to the rigid body case considered in \cite{GH_noslip}, the present setting allows the entire upper boundary of the fluid domain to be compliant. This may give rise to genuine topological degeneracies, as the collapse of the structure leads to loss of connectivity and fragmentation of the fluid domain.} Furthermore, several analytical tools, such as the trace theorem in a moving domain setting (previously studied in works such as \cite{CDEG, LR14}), are unavailable as the constants of continuity depend inversely on the minimum height of the fluid domain. The techniques of \cite{CGH21}, where fluid velocity is extended outside of the moving domain to deal with the cusps formed in the fluid domain at contact, do not apply either since we consider the slip conditions which {induce} discontinuity between the fluid and structure velocities at the interface.
To circumvent these issues, we develop {delicate, alternative} approaches that avoid relying on the aforementioned tools altogether.

Another key difference is that \cite{GH_noslip} enforces the contact to occur at only a single, predetermined point, an assumption that is physically plausible. Within our framework, such an assumption does not reflect the physical behavior of the system and therefore cannot be imposed. Indeed, the points corresponding to minimal structural height may be non-unique and can even form an infinite set, as the structure may exhibit highly oscillatory behavior. As a result, the localized analysis techniques developed in \cite{GH_noslip} are not directly applicable in our setting. Hence, we do not use the methods of \cite{GH_noslip} that rely on proving that the drag force experienced by the rigid body, on account of the viscous fluid, is not strong enough to counteract the forces of gravity. 
Instead, we construct a test pair that does not rely on such localized analyses or computation of drag. Our construction of test function pair involves the first spatial derivative of the structure displacement.
However, this requires the structure to have more regularity than afforded by the energy estimates. In particular, its curvature must be bounded in space. This regularity is not available for the weak solutions we consider in this manuscript. Hence, we first prove a `hidden' regularity result which improves the spatial regularity of structure displacement over the ``basic'' regularity provided by the energy estimate. This regularity result, does {\it not} involve bounds that are independent of the structure displacement. Instead, this result only claims that for as long as there is no contact, the structure belongs to the more regular space $L^2_tH^3_x$ (i.e. the bounds may blow up at contact). This is sufficient to justify the use of our constructed test functions, for as long as the structure is away from the bottom boundary.

The paper is organized as follows: In Section \ref{sec:setup} we describe the fluid-structure interaction model, derive the weak-formulation and the energy of the system. We then layout the mathematical framework by defining appropriate functional spaces and by giving the definition of weak solutions. Our main results are stated in Section \ref{sec:mainresults}. In Section \ref{sec:exist}, we discuss a proof of existence of weak solutions satisfying appropriate boundary conditions.
In Section \ref{sec:regularity}, we prove the `hidden' spatial regularity of the structure displacement and in Section \ref{sec:contact} we prove the finite-time contact result.


\section{Problem set up}\label{sec:setup}
This paper investigates the interactions between an elastic membrane and a two dimensional flow of a viscous, incompressible
Newtonian fluid. The fluid flows through a channel, denoted by $F_h$, which consists of four boundaries: The compliant boundary on top is denoted by $\Gamma_h$, the rigid bottom boundary is denoted by $\Gamma_b$ and the fixed inlet and outlet boundaries that permit fluid flow are denoted by $\Gamma_{in},\Gamma_{out}$ (see Fig. \ref{FSI}).
\begin{figure}[h]
    \centering
    \includegraphics[width=0.6\linewidth]{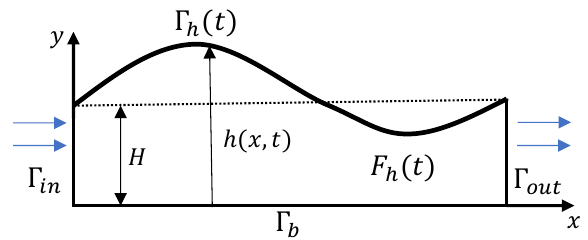}
    \caption{A snapshot of fluid domain}
    \label{FSI}
\end{figure}

For a continuous function $h(x,t):[0,L]\times [0,\infty)\mapsto \mathbb{T}$, we introduce the notation for fluid domain,
\begin{equation*}
    {F_h(t)} := \{ (x,y): 0< x < L, \,\, 0 < y < h(x,t)\}.
\end{equation*}
In this work, $h$ which determines the location of the elastic structure sitting atop the fluid domain, is itself an unknown in the problem.
The time-dependent (deformable) boundary of the fluid domain ${F_h(t)}$ is given by
$${ \Gamma_h(t) := \{(x,h(x,t)): x\in (0,L)\} ,}$$
whereas the rest of the fixed boundaries $\Gamma_b$, $\Gamma_{in}$ and $\Gamma_{out}$, denoting the bottom boundary and the inlet and outlet boundaries are as follows,
\begin{align*}
        \Gamma_b &:= (0,L) \times \{0\},\\
        \Gamma_{in} := \{0\} \times (0,H),&\quad
        \Gamma_{out} := \{L\} \times (0,H).
\end{align*}

The incompressible flow of a viscous fluid is governed by the Navier-Stokes equations. We look for fluid velocity $\bu =(u_1,u_2):{F_h(t)}\to \mathbb{R}^2$ and pressure $p:{F_h(t)}\to\mathbb{R}$ satisfying,
\begin{equation}
   \begin{split}\label{fluid_eqn} 
      &  \rho_f(\partial_t\bu +\bu\cdot\nabla \bu) - \text{div}\boldsymbol{\sigma}(\bu,p) = 0, \qquad \text{ in } {F_h(t)}\\
   & \text{div }\bu=0, 
\qquad \text{ in } {F_h(t)},
   \end{split}
\end{equation}
where $\displaystyle\boldsymbol{\sigma}(\bu,p) = \mu\nabla \bu - p{\bf I}$ is the Cauchy stress tensor. This constitutive law ignores the contributions of the symmetric part $(\nabla\bu)^T$. Such constitutive laws are not uncommon in the literature. In the present setting, Korn's inequality depends on the curvature of the surface $\Gamma_h$ due to the tangential jump of the velocities at the surface (see below \eqref{kin_bc}). Hence, challenges arise in the application of Korn's equation, due to the absence of sufficient regularity of $\Gamma_h$. This can be remedied if a more regular structure is considered.

Here $\rho_f$ is the constant fluid density which, for simplicity, will be set to 1.

We impose the following Navier-slip condition at the bottom boundary $\Gamma_b$:
\begin{equation}
    \begin{split}\label{bottom_bc}
       &   \bu\cdot \bn^b=0, \quad \text{and}    \quad  \bu\cdot\bftau^b=-\beta_b(\boldsymbol{\sigma} \bn^b)\cdot\bftau^b \text{ on }\Gamma_b,
    \end{split}
\end{equation}
where $\bn^b=(0,-1)$ is the unit normal to $\Gamma_b$ and $\bftau^b=(1,0)$. The proportionality constant $\beta_b>0$ is the slip-length.

At the inlet and the outlet of the channel, we assume that the flow is purely transversal. At these boundaries, we also impose the following dynamic pressure, and fixed outflow rate conditions: 
\begin{equation}
    \begin{split}\label{bc_inout}
        &  p(t)+\frac12{|u_1(t)|^2} =\Pinout(t),   \quad \text{ and }\quad u_2(t) = 0, \quad \text{ on } \Gamma_{in/out},
     \\   & { \int_{\Gamma_{out}}u_1(t) = 1},
    \end{split}
\end{equation}
where the given time-dependent pressure data $\Pinout$ are assumed to be in $L^2_{loc}(0,\infty)$. Moreover, we assume that the following global-in-time bound is satisfied at the inlet only: For some constant $C_0>0$, 
\begin{align}\label{pressureL^2}
    \|\Pin\|_{L^2(0,\infty)}\leq C_0.
\end{align}
To prove the desired finite-time contact of the top compliant boundary with the bottom boundary, we will also suppose, for some constant pressure $p_0>0$, that
\begin{align}
    \label{pressure_difference}
    \Pout(t)-\Pin(t) \geq p_0,\qquad \text{ for all times } t\geq 0.
\end{align}
We assume that the structure can be deformed only in the direction normal to the bottom boundary $\Gamma_b$.
The elastodynamics problem, describing the height of the thin elastic structure from the bottom boundary, is given by the following plate equations:
\begin{equation}
   \rho_s \partial_{tt}h +\alpha\partial_{xxxx}h-\gamma\partial_{xxt}h = f(\bu,p,h),\qquad \text{ in } (0,L).
\label{beam_eqn}
\end{equation}
Here $\alpha>0$ and $\gamma>0$ are the elastic and visco-elastic coefficients, respectively, $\rho_s$ is the density of the structure which will be set to 1, and $f$ is the load to the structure. Given that there are no external forces on the structure, this force in the coupled problem results from the jump in the normal stress (traction) across the structure. This is described below in \eqref{dyn_bc}.

The structure assumed to be clamped and satisfies the following boundary conditions:
\begin{equation}
    \begin{split}\label{bc_h}
           h(0,t) = h(L,t) = H,\\
        \partial_x h(0,t) = \partial_x h(L,t) =0.
    \end{split}
\end{equation}
The fluid and the structure are coupled via two coupling conditions and these interactions are described in detail below. 

1. The {\bf kinematic coupling condition} describes the behavior of kinematic entities at the fluid-structure interface. We impose the following Navier-slip coupling condition, which reads,
\begin{equation}\label{kin_bc}
    \begin{split}
       & (0,\partial_t h)\cdot \bn^h = \bu\cdot \bn^h, \text{ on } \Gamma_h\\
       & ((0,\partial_t h)-\bu)\cdot \bftau^h = \beta_s \boldsymbol{\sigma} \bn^h\cdot \bftau^h, \text{ on } \Gamma_h.
    \end{split}
\end{equation}
Here, 
\begin{align}\label{normaltangent}
    \bn^h=\frac{(-\partial_xh,1)}{\sqrt{1+|\partial_xh|^2}}, \qquad \text{and}\qquad  \bftau^h={ \frac{(1,\partial_xh)}{\sqrt{1+|\partial_xh|^2}}},
\end{align}
are, respectively, unit normal and tangential along the top time-dependent (unknown) boundary $\Gamma_h$ and $\beta_s>0$ is the slip-length. 

The first equation in \eqref{kin_bc} describes the continuity of fluid and structure velocities in the direction $\bn^h$ normal to $\Gamma_h$ thus implying that the fluid does not permeate through the top structure boundary. Unlike the no-slip coupling condition, wherein the fluid sticks to the top boundary and thus has no motion relative to the structure, the Navier-slip coupling condition allows for the motion of the fluid in the direction tangential to the boundary $\Gamma_h$. This is reflected in the second equation in \eqref{kin_bc} which implies that the jump in the fluid and structure velocities in the direction $\bftau_h$, tangential to $\Gamma_h$, is proportional to the tangential component of the normal stresses at the interface, where the proportionality constant is the structure slip-length $\beta_s$.\\

2. The {\bf dynamic coupling condition} describes the balance of forces at the fluid-structure interface and reads,
\begin{equation}\label{dyn_bc}
    f(\bu,p,h) = -\cal{S}_h(\bu,p)\boldsymbol{\sigma}(\bu,p)\bn^h \cdot (0,1), \quad \text{ on }\Gamma_h.
\end{equation}
Here, $\cal{S}_h=\sqrt{1+|\partial_xh|^2}$ is the surface measure of $\Gamma_h$, i.e. $d\Gamma_h=\cal{S}_hd\Gamma$. This condition states that the structure elastodynamics is driven by the jump in the normal stress across the interface $\Gamma_h$.

Finally, this system is supplemented with the following initial conditions:
\begin{align}\label{ic}
	\bu(t=0)=\bu_0,\quad {h}(t=0)={h}_0, \quad \partial_t{h}(t=0)=v_0.
\end{align}

\subsection{The energy of the problem}
In this section, we formally derive the energy of the coupled fluid-structure system.
We begin by multiplying the fluid equation by $\bu$ and integrating over the fluid domain ${F_h(t)}$, which yields
\begin{equation*}
    \int\limits_{{F_h(t)}} (\partial_t \bu) \cdot \bu 
    + \int\limits_{F_h(t)} (\bu\cdot \nabla \bu)\cdot \bu 
    - \int\limits_{{F_h(t)}} (\nabla\cdot\boldsymbol{\sigma}) \cdot\bu 
    =0
\end{equation*}
Using the Reynolds' transport theorem (see e.g. \cite{CKMT25}) with domain velocity $w$ on the first term yields
\begin{equation*}
    \int\limits_{{F_h(t)}} (\partial_t \bu)\cdot \bu 
    = \frac{1}{2}\frac{d}{dt}\int\limits_{{F_h(t)}} |\bu|^2 - \frac{1}{2} \int\limits_{ {\Gamma_h(t)}}  |\bu|^2 \bv \cdot \bn^h,
\end{equation*}
where we use the following notation to denote the velocity of the compliant boundary,
$$\bv := (0,\partial_t h).$$

For the second term, incompressibility gives us $\displaystyle(\bu\cdot \nabla \bu) \cdot \bu = \frac{1}{2}\nabla \cdot (|\bu|^2 \bu)$ and we write,
\begin{equation*}
    \int\limits_{{F_h(t)}}(\bu\cdot \nabla \bu)\cdot \bu 
    =\frac{1}{2}\int\limits_{\partial {F_h(t)}} |\bu|^2 (\bu\cdot \bn)=\frac{1}{2}\int\limits_{ {\Gamma_h(t)}} |\bu|^2 (\bv\cdot \bn^h)- \frac{1}{2}\int\limits_{ {\Gamma_{in}}} |\bu|^2 u_1+\frac{1}{2}\int\limits_{ {\Gamma_{out}}} |\bu|^2 u_1.
\end{equation*}
Next, we consider the third and final term.
Since $\nabla \cdot \bu =0$ we have 
$$ \int\limits_{{F_h(t)}}(\nabla\cdot\boldsymbol{\sigma}) \cdot\bu =- \mu \int\limits_{{F_h(t)}}|\nabla\bu|^2+\int\limits_{\partial {F_h(t)}}\boldsymbol{\sigma} \bn \cdot \bu,$$
where $\bn$ is the unit normal to the fluid domain boundary $\partial F_h$.
We split the boundary integral on the right-hand side of the equation above.
At the bottom boundary we apply the  Navier-slip condition \eqref{bottom_bc} to obtain that,
\begin{equation*}
    \int\limits_{\Gamma_b} \boldsymbol{\sigma} \bn^b \cdot \bu 
    = -\frac{1}{\beta_b} \int\limits_{\Gamma_b}|\bu\cdot \bftau^b|^2.
\end{equation*}
Next, we turn to the moving boundary and using the slip conditions \eqref{kin_bc}$_1$, we obtain
\begin{equation*}
    \int\limits_{\Gamma_h} \boldsymbol{\sigma} \bn^h\cdot \bu 
    = \int\limits_{\Gamma_h} \boldsymbol{\sigma} \bn^h \cdot (\bu-\bv) 
    + \int\limits_{\Gamma_h} \boldsymbol{\sigma} \bn^h \cdot \bv
    = -\frac{1}{\beta_s} \int\limits_{\Gamma_h} |(\bu-\bv)\cdot \bftau^h|^2 
    + \int\limits_{\Gamma_h} \boldsymbol{\sigma} \bn^h \cdot \bv.
\end{equation*}
Observe that, due to the dynamic pressure data at the inlet and the outlet boundaries \eqref{bc_inout}, we also have, 
\begin{equation*}
\begin{split}
    \int\limits_{\Gamma_{in}} \boldsymbol{\sigma} \bn \cdot \bu = \int\limits_{\Gamma_{in}} \Pin u_1 
    - \frac{1}{2}\int\limits_{ {\Gamma_{in}}} |\bu|^2 u_1,\qquad
    \int\limits_{\Gamma_{out}} \boldsymbol{\sigma} \bn \cdot \bu =
    -\int\limits_{\Gamma_{out}} \Pout u_1+\frac{1}{2}\int\limits_{ {\Gamma_{out}}} |\bu|^2 u_1.
\end{split}
\end{equation*}
Thus combining these identities we arrive at,
\begin{equation} \label{fluid_en}
   \begin{split}
        \frac{1}{2}\frac{d}{dt}\int\limits_{{F_h(t)}}  |\bu|^2
  &  +\mu \int\limits_{{F_h(t)}}|\nabla\bu|^2 + \frac{1}{\beta_b} \int\limits_{\Gamma_b}|\bu\cdot \bftau^b|^2
    +\frac{1}{\beta_s} \int\limits_{\Gamma_h} |(\bu-\bv)\cdot \bftau^h|^2 \\
    &
    =\int\limits_{\Gamma_{in}} \Pin u_1
    -\int\limits_{\Gamma_{out}} \Pout u_1
    + \int\limits_{\Gamma_h} \boldsymbol{\sigma}\bn^h \cdot \bv.
   \end{split}
\end{equation}
We next multiply the beam equation \eqref{beam_eqn} with the structure velocity $\partial_t h$, integrate and then integrate by parts. This yields,
\begin{equation} \label{beam_en}
    \frac{1}{2} \frac{d}{dt} \int\limits_0^L  |\partial_t h|^2 
    + \frac{1}{2} \frac{d}{dt}
     \int\limits_0^L\alpha |\partial_{xx}h|^2 
    + \gamma \int\limits_0^L |\partial_{xt} h|^2 
    =- \int\limits_{\Gamma_h} \boldsymbol{\sigma} \bn^h \cdot \bv
\end{equation}
Hence, by combining the two equations \eqref{fluid_en} and \eqref{beam_en} and integrating in time over $[0,t]$, we arrive at,
\begin{equation}
  \begin{split}\label{en_eqn}
        &\frac{1}{2} \left[ \int\limits_{{F_h(t)}}  |\bu(t)|^2
    +\int\limits_0^L  |\partial_t h(t)|^2 
    + \alpha \int\limits_0^L |\partial_{xx}h(t)|^2
    \right] 
    + \mu \int\limits_0^t \int\limits_{F_h(s)} |\nabla\bu|^2
    \\ 
    & + \frac{1}{\beta_b}\int\limits_0^t \int\limits_{\Gamma_b} |\bu\cdot \bftau^b|^2
    + \frac{1}{\beta_s}\int\limits_0^t \int\limits_{\Gamma_h} |(\bu-\bv)\cdot \bftau^h|^2 
    + \gamma \int\limits_0^t \int\limits_0^L |\partial_{xt}h|^2 
    \\  
    &= \frac{1}{2} \left[\int\limits_{F_{h_0}} |\bu_0|^2 
    + \int\limits_0^L  |v_0|^2
    + \alpha \int\limits_0^L |\partial_{xx}h_0|^2
    \right]
     + \int\limits_0^t \left( \int\limits_{\Gamma_{in}} \Pin u_1 - \int\limits_{\Gamma_{out}}\Pout u_1\right).
  \end{split}
\end{equation}
To complete this energy estimate, for any $T>0$, we next take $\sup_{t\in[0,T]}$ on both sides of the equation above and then find bounds for the boundary integrals appearing on the right-hand side of the equation above. Instead of using the typical trace theorem where the constant of continuity depends inversely on the height of the fluid domain, i.e. the function $h$ (see e.g. \cite{CDEG,CKMT25}), which is not amenable to our current setting of analyzing near-contact dynamics, we make the following observations: Firstly, due to the incompressibility condition, we have 

\begin{align*}
   \int_{\Gamma_{in}} \bu|_{\Gamma_{in}} \cdot (-1,0) +    \int_{\Gamma_{out}} \bu|_{\Gamma_{out}} \cdot (1,0) + 
   \int_{\Gamma_{h}} \bu|_{\Gamma_{h}} \cdot \bn^h = \int_{F_h}\nabla\cdot{\bu} =0.
\end{align*}
Using the continuity of fluid and structure velocities in the normal direction at the fluid-structure interface, namely the kinematic condition \eqref{kin_bc}, we further simplify this equation as follows,
\begin{align}\label{fluxdF}
 \int_{\Gamma_{out}} u_1   -\int_{\Gamma_{in}} u_1 = -\int_0^L  \partial_t h . 
\end{align}

Hence, the second term on the right hand side of \eqref{en_eqn} can be written as,
\begin{align*}
    \int_0^T\left(\Pin\int_{\Gamma_{in}}u_{1}- \Pout\int_{\Gamma_{out}}u_{1} \right)   
    = \int_0^T\left(\Pin\int_0^L  \partial_t h+ (\Pin-\Pout)\int_{\Gamma_{out}}u_1\right).
\end{align*}
Recall the boundary condition \eqref{kin_bc}$_2$ which imposes the fixed outflux condition $\displaystyle\int_{\Gamma_{out}}u_1=1$. Then, due to the assumption that $P_{in}(t)-P_{out}(t)\leq 0$ for all $t\geq0$, stated in \eqref{pressure_difference}, the second term on the right hand side of the equation above is negative. Hence, we obtain,
 $$ \int_0^T\left(\Pin\int_{\Gamma_{in}}u_{1}- \Pout\int_{\Gamma_{out}}u_{1} \right) \leq C_\gamma\|\Pin\|_{L^2(0,\infty)}^2 +\frac{\gamma}{2}\int_0^T\|\partial_{tx}h\|^2_{L^2(0,L)},$$
 where the constant $C_\gamma>0$ depends only on $\gamma$ and is independent of $T$.
 We stress that it is crucial for this constant to be independent of $T$. We also remark that, with these a-priori estimates at our disposal, we can derive an appropriate trace inequality (see \eqref{trace0}).

\subsection{Weak formulation on the moving domain}
Using the convention that bold-faced letters denote spaces containing vector-valued functions, we define the following relevant function spaces for the fluid velocity and the structure displacement motivated by the energy inequality obtained above:
\begin{align*}
  &  {\cal{V}}_{F_h(t)} = \{ \bu\in \bH^1({F_h(t)}):\,\,\nabla\cdot \bu=0, \,\, u_2\big|_{ \Gamma_b,\Gamma_{in},\Gamma_{out}}=0 \}, \\
  &  {\cal{W}}_F(0,T)=L^\infty(0,T;\bL^2({F_h})) \cap L^2(0,T;{\mathcal{V}}_{F_h}),\\
   & {\mathcal{W}}_S(0,T)=W^{1,\infty}(0,T;L^2(0,L))\cap L^\infty(0,T;H^2(0,L))\cap H^1(0,T;H^1_0(0,L)),\\
  &  {\mathcal{W}}(0,T)=\{(\bu,h)\!\in\!{\mathcal{W}}_F(0,T)\times {\mathcal{W}}_S(0,T):\\
  &\hspace{1in}\bu(t,x,h(x,t))\cdot \bn^h=(0,\partial_t h(x,t))\cdot \bn^h, (t,x)\in [0,T]\times[0,L] \},\\
  &  {\cal{T}}_F(0,T)=\{ \bq\in H^1(0,T;\bL^2({F_h})) \cap L^2(0,T;\bH^1({F_h})), \,\,\,\nabla\cdot\bq=0,\,\, q_2|_{ \Gamma_b,\Gamma_{in},\Gamma_{out}}=0\},\\
   & {\cal{T}}_S(0,T) = H^1(0,T;L^2(0,L)) \cap L^2(0,T;H^2(0,L)),\\
   & {\cal{T}}(0,T) = \{(\bq,\zeta)\in {\cal{T}}_F(0,T) \times {\cal{T}}_S(0,T): \\
    &\hspace{1in} \bq(t,x,h(x,t)) \cdot \bn^h = (0,\zeta(x,t) )\cdot \bn^h, (t,x) \in [0,T]\times[0,L]\}.
\end{align*}
We note here that the continuity of velocities in the normal direction at $\Gamma_h$ is part of our solution and test spaces. The jump in the tangential direction is enforced weakly.

In what follows, we aim to derive the weak formulation of the problem \eqref{fluid_eqn}-\eqref{dyn_bc}. We consider a test pair $(\bq,\zeta)\in \cal{T}(0,T)$. 
We begin by formally multiplying the fluid equations \eqref{fluid_eqn} with $\bq$, then integrating in time over $[0,T]$, for some $T>0$. This yields,
\begin{equation}\label{inter}
   \int\limits_0^T\int\limits_{{F_h(t)}} \partial_t \bu \cdot \bq 
   + \int\limits_0^T\int\limits_{{F_h(t)}} (\bu\cdot \nabla \bu) \cdot \bq
   - \int\limits_0^T\int\limits_{{F_h(t)}} (\nabla \cdot \boldsymbol{\sigma})\cdot \bq 
   = 0.
\end{equation}
We will treat each of the terms individually.

    \textbullet\, 
    With the aid of the Reynolds transport theorem (see e.g. \cite{CKMT25}) we write the first term on the left side of \eqref{inter} as,
    $$\int\limits_0^T\int\limits_{{F_h(t)}} \partial_t \bu \cdot \bq 
    = \left[\int\limits_{F_h(s)} \bu\cdot\bq\right]_{s=0}^{s=T}
    -\int\limits_0^T\int\limits_{{F_h(t)}}\bu\cdot\partial_t \bq
    - \int\limits_0^T\int\limits_{ {\Gamma_h(t)}} (\bv \cdot \bn^h)(\bu\cdot\bq),$$
     where we denote by
    \begin{align}\label{structurevelocity}
        \bv=(0,\partial_t h(x,t)),
    \end{align}
 the velocity of the top boundary $\Gamma_h$.

     \textbullet\,  For the advection term in \eqref{inter}, integration by parts yields,
  \begin{align*}
	\int\limits_0^T\int\limits_{{F_h(t)}} (\bu\cdot\nabla)\bu\cdot\bq&=\frac12\int\limits_0^T\int\limits_{{F_h(t)}} (\bu\cdot\nabla)\bu\cdot\bq-\frac12\int\limits_0^T\int\limits_{{F_h(t)}} (\bu\cdot\nabla)\bq\cdot\bu +\frac12\int\limits_0^T\int\limits_{\partial F_h(t)}(\bu\cdot\bq )(\bu\cdot \bn^{h}) \\
	&=\frac12\int\limits_0^T\int\limits_{{F_h(t)}} (\bu\cdot\nabla)\bu\cdot\bq-\frac12\int\limits_0^T\int\limits_{{F_h(t)}} (\bu\cdot\nabla)\bq\cdot\bu +\frac12\int\limits_0^T\int\limits_{\Gamma_h(t)}(\bu\cdot\bq)(\bu\cdot \bn^{h}) \\
    &- \frac12\int\limits_0^T\int_{\Gamma_{in}}|u_1|^2q_1+\frac12\int\limits_0^T\int_{\Gamma_{out}}|u_1|^2q_1.
\end{align*}
      
    \textbullet\,  For the last term in \eqref{inter}, we apply integration by parts and obtain, 
    $$\int\limits_0^T\int\limits_{{F_h(t)}} (\nabla \cdot \boldsymbol{\sigma})\cdot \bq 
    = -\int\limits_0^T\int\limits_{{F_h(t)}}\mu \nabla\bu : \nabla\bq 
    + \int\limits_0^T\int\limits_{\partial {F_h(t)}} \boldsymbol{\sigma} \bn \cdot \bq,$$
   where $\bn$ is the unit normal at the boundary $\partial F_h$.
    
We will next analyze the boundary term. For that purpose we recall the Navier-slip boundary conditions \eqref{bottom_bc}, \eqref{kin_bc} that are imposed at $\Gamma_b$ and $\Gamma_h$, respectively.
Denoting $\bfzeta=(0,\zeta)$, we find,
    \begin{align*}
    & \int\limits_0^T\int\limits_{\Gamma_b}\boldsymbol{\sigma} \bn^b\cdot\bq 
        + \int\limits_0^T\int\limits_{\Gamma_h(t)}\boldsymbol{\sigma} \bn^h \cdot\bq \\
        &= \int\limits_0^T\int\limits_{\Gamma_b}\boldsymbol{\sigma} \bn^b \cdot((\bq\cdot \bn^b)\bn^b + (\bq\cdot\bftau^b)\bftau^b) 
        + \int\limits_0^T\int\limits_{\Gamma_h(t)}\boldsymbol{\sigma} \bn^h\cdot((\bq\cdot \bn^h)\bn^h + (\bq\cdot\bftau^h)\bftau^h) \\
        &= -\frac{1}{\beta_b}\int\limits_0^T\int\limits_{\Gamma_b}(\bu\cdot\bftau^b)\cdot (\bq\cdot\bftau^b) 
        - \frac{1}{\beta_s}\int\limits_0^T\int\limits_{\Gamma_h(t)}((\bu-\bv)\cdot\bftau^h)\cdot (\bq\cdot\bftau^h)
+\int\limits_0^T\int\limits_{\Gamma_h(t)}\boldsymbol{\sigma} \bn^h\cdot \bn^h(\bfzeta\cdot \bn^h).
    \end{align*}
Next, we consider the inlet and the outlet boundaries. By denoting $\bq=(q_1,q_2)$ we observe that,
\begin{align*}
  \int\limits_0^T \int\limits_{\Gamma_{in/out}} \boldsymbol{\sigma} \bn \cdot \bq  =\int\limits_0^T \int\limits_{\Gamma_{in/out}}\pm p\, q_1 =\pm \int\limits_0^T \int\limits_{\Gamma_{in/out}} \left( \Pinout-\frac12|u_1|^2\right) q_1 .
\end{align*}
Now we plug all the above identities into \eqref{inter} and obtain

\begin{equation} \label{WF_fluid}
\begin{split}
      & \left[\int\limits_{F_h(t)} \bu(t)\cdot\bq(t)\right]_{t=0}^{t=T}
     -\int\limits_0^T\int\limits_{{F_h(t)}}\bu\cdot\partial_t \bq   
     + \int\limits_0^T\int\limits_{{F_h(t)}}\mu \nabla\bu : \nabla\bq  -\frac12\int\limits_0^T\int\limits_{\Gamma_h(t)}(\bu\cdot\bq)(\bu\cdot \bn^{h})\\
    & +\frac12\int\limits_0^T\int\limits_{{F_h(t)}} (\bu\cdot\nabla)\bu\cdot\bq -\frac12\int\limits_0^T\int\limits_{{F_h(t)}} (\bu\cdot\nabla)\bq\cdot\bu -\int\limits_0^T\int\limits_{\Gamma_h(t)}\boldsymbol{\sigma} \bn^h\cdot \bn^h(\bfzeta\cdot \bn^h)\\ 
 &+\frac{1}{\beta_b}\int\limits_0^T\int\limits_{\Gamma_b}(\bu\cdot\bftau^b)\cdot (\bq\cdot\bftau^b) 
        + \frac{1}{\beta_s}\int\limits_0^T\int\limits_{\Gamma_h(t)}((\bu-\bv)\cdot\bftau^h)\cdot (\bq\cdot\bftau^h)
= \int\limits_0^T \int\limits_{\Gamma_{in}} \Pin q_1 
	-\int\limits_0^T \int\limits_{\Gamma_{out}}  \Pout q_1.
\end{split}
\end{equation}

Next, we multiply the structure equation \eqref{beam_eqn} with the test function $\zeta$,
and integrate by parts the terms on the left side. Then by applying the dynamic coupling conditions \eqref{dyn_bc} to the term on the right of the structure equation \eqref{beam_eqn}, 
and noticing for $\bfzeta=(0,\zeta)$, that
\begin{align*}
 \int\limits_0^T\int\limits_0^L\cal{S}_h\boldsymbol{\sigma} \bn^h \cdot \bfzeta ds
    & = \int\limits_0^T\int\limits_{\Gamma_h(t)}\boldsymbol{\sigma} \bn^h\cdot \bn^h(\bfzeta\cdot \bn^h) 
    + \frac{1}{\beta_s}\int\limits_0^T\int\limits_{\Gamma_h(t)}((\bv-\bu)\cdot\bftau^h) \cdot (\bfzeta\cdot\bftau^h),
\end{align*}
we obtain, 
\begin{align} \label{beam_int}
    \left[\int\limits_{0}^{L}\partial_th(t)\zeta(t) \right]_{t=0}^{t=T}   - \int\limits_0^T\int\limits_{0}^{L}\partial_th\partial_t\zeta 
    + \int\limits_0^T\int\limits_{0}^{L}\alpha \partial_{xx}h\partial_{xx}\zeta 
    + \int\limits_0^T\int\limits_{0}^{L}\gamma\partial_{xt}h\partial_x\zeta\\
    \nonumber
    +\int\limits_0^T\int\limits_{\Gamma_h(t)}\boldsymbol{\sigma} \bn^h\cdot \bn^h(\bfzeta\cdot \bn^h)
    +\frac{1}{\beta_s}\int\limits_0^T\int\limits_{\Gamma_h(t)}((\bv-\bu)\cdot\bftau^h) \cdot (\bfzeta\cdot\bftau^h)=0.
\end{align}

Thus the weak formulation of our problem is attained by adding together the identities \eqref{WF_fluid} and \eqref{beam_int}:
	For any given $T>0$, we seek $(\bu,h) \in {\mathcal{W}}(0,T)$, with $\int_{\Gamma_{out}}u_1=1$ that satisfies the following equation for any test function $(\bq,\zeta) \in  {\mathcal{T}}(0,T)$ such that $\int_{\Gamma_{out}}q_1=1$,
 \begin{equation} \label{weak_form_gen}
  \begin{split}
     &  \int\limits_{F_h(T)} \bu(T)\cdot\bq(T)+\int\limits_{0}^{L}\partial_th(T)\zeta(T)  
     -\int\limits_0^T\int\limits_{{F_h(t)}}\bu\cdot\partial_t \bq   
     + \int\limits_0^T\int\limits_{{F_h(t)}}\mu \nabla\bu : \nabla\bq-\frac12\int\limits_0^T\int\limits_{\Gamma_h(t)}(\bu\cdot\bq)(\bu\cdot \bn^{h})
     \\
   & +\frac12\int\limits_0^T\int\limits_{{F_h(t)}} (\bu\cdot\nabla)\bu\cdot\bq -\frac12\int\limits_0^T\int\limits_{{F_h(t)}} (\bu\cdot\nabla)\bq\cdot\bu +\frac{1}{\beta_b}\int\limits_0^T\int\limits_{\Gamma_b}(\bu\cdot\bftau^b)\cdot (\bq\cdot\bftau^b)  \\
  & 
        + \frac{1}{\beta_s}\int\limits_0^T\int\limits_{\Gamma_h(t)}((\bu-\bv)\cdot\bftau^h)\cdot ((\bq-\boldsymbol{\zeta})\cdot\bftau^h)
         - \int\limits_0^T\int\limits_{0}^{L}\partial_th\partial_t\zeta 
  + \int\limits_0^T\int\limits_{0}^{L}\alpha \partial_{xx}h\partial_{xx}\zeta 
    + \int\limits_0^T\int\limits_{0}^{L}\gamma\partial_{xt}h\partial_x\zeta \\
&= \int\limits_{F_{h_0}} \bu_0\cdot\bq(0)+ \int\limits_{0}^{L}v_0\zeta(0)  
+\int\limits_0^T \int\limits_{\Gamma_{in}} \Pin q_1 
	-\int\limits_0^T \int\limits_{\Gamma_{out}}  \Pout q_1,
  \end{split}
\end{equation}
where we use the notation $\bv=(0,\partial_t h)$.

We are now in a position to state our main results. This will be done in the following section.

\section{Main results}\label{sec:mainresults}
In this section we present the main results of this paper. 
First, we recall here the results given in \cite{MC16, T24} that provide a proof of existence of weak solutions  locally-in-time. A modification of the approaches used in these papers, give us the following existence result that incorporates the new boundary condition \eqref{bc_inout}$_2$.
\begin{theorem}\label{thm:exist}
	Let the initial data for structure displacement, structure velocity and fluid velocity be such that $h_0-H \in H^{2}_0(0,L),v_0\in H^1_0(0,L)$ and $\bu_0\in \bL^2(F_{h_0})$. Assume that the dynamic pressure data at the inlet and the outlet boundaries are $P_{in}\in L^2(0,\infty), P_{out}\in L^2_{loc}(0,\infty)$. 
     Then there exists $T_0>0$ and at least one weak solution of the system \eqref{fluid_eqn}-\eqref{dyn_bc} given by $(\bu,h)\in \cal{W}(0,T_0)$  satisfying the outlet flux condition,
   \begin{align}\label{uoutlet}
       \int_{\Gamma_{out}}u_1 
       =1,\qquad\text{ for almost every }t< T_0 ,
   \end{align}
    and satisfying the weak formulation \eqref{weak_form_gen} on $[0,T_0)$ for any test function $(\bq,\zeta)\in \cal{T}(0,T)$ with,
\begin{align}\label{phioutlet}
 \int_{\Gamma_{out}}q_1
=1, \qquad\text{ for almost every }t< T_0 .
\end{align}

    This weak solution further satisfies the following energy inequality for some constant $C>0$ independent of $T_0$:
    \begin{align}\label{en_ineq}
    \sup\limits_{t\in[0,T_0]} & 
    \left( 
    \|\bu(t)\|^2_{L^2({F_h(t)})} 
    + \| \partial_t h (t)\|^2_{L^2(0,L)} 
    + \|\partial_{xx}h(t)\|^2_{L^2(0,L)}
    \right)
    \\ \nonumber
    & + \int\limits_0^{T_0} \int\limits_{{F_h(t)}}|\nabla\bu|^2
    + \frac{1}{\beta_b}\int\limits_0^{T_0} \int\limits_{\Gamma_b} |\bu\cdot\bftau^b|^2
    + \frac{1}{\beta_s}\int\limits_0^{T_0} \int\limits_{\Gamma_h} |(\bu-(0,\partial_th))\cdot\bftau^h|^2 
    + \gamma \int\limits_0^{T_0} \int\limits_0^L |\partial_{xt}h|^2 
    \\ \nonumber &
    \leq \int\limits_{F_{h_0}}\rho_f |\bu_0|^2 
    + \int\limits_0^L \rho_s |v_0|^2
   + \int\limits_0^L |\partial_{xx} h_0|^2  + C\int\limits_0^\infty \left( P_{in}  \right)^2dt\\
   &:=K_0.\nonumber
\end{align}

    Moreover, the time $T_0>0$ is the first instance of contact of the elastic structure with the bottom boundary. In other words, $h(x_0,T_0)=0$ for some $x_0\in(0,L)$.
\end{theorem}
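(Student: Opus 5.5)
The plan is to follow the constructive scheme of \cite{MC16, T24}: an operator splitting (Lie) scheme in time on a fixed reference domain, combined with a penalty for the outlet flux constraint \eqref{uoutlet}. The first step maps $F_h(t)$ to the reference rectangle $(0,L)\times(0,1)$ via $(x,y)\mapsto (x, y/h(x,t))$. This is admissible as long as $\min h>0$, and it turns \eqref{weak_form_gen} into a problem on a fixed domain with $h$-dependent coefficients.

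For fixed $\varepsilon>0$ I will drop the constraint $\int_{\Gamma_{out}}q_1=1$ from the test space. In its place I add to the weak formulation the penalty term
\[
\frac{1}{\varepsilon}\int_0^T\Big(\int_{\Gamma_{out}}u_1-1\Big)\Big(\int_{\Gamma_{out}}q_1\Big).
\]
On a time step $\Delta t$, the scheme alternates two subproblems. The first is a structure subproblem: the plate equation with viscosity, which updates $h$ and $\partial_t h$. The second is a fluid subproblem: a linearized Navier-Stokes step with the slip terms and the penalty term, posed on the domain frozen at the new $h$. Each subproblem is solved by Lax--Milgram in the joint space carrying the normal-velocity constraint. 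Each one dissipates the discrete analogue of the energy \eqref{en_eqn}, with an extra term $\varepsilon^{-1}\int_0^T|\int_{\Gamma_{out}}u_1-1|^2$.

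The inlet/outlet work is handled exactly as in the formal computation after \eqref{en_eqn}. Using the discrete flux identity \eqref{fluxdF}, I write
\[
\Pin\int_{\Gamma_{in}}u_1-\Pout\int_{\Gamma_{out}}u_1=\Pin\int_0^L\partial_th+(\Pin-\Pout)\int_{\Gamma_{out}}u_1 .
\]
The first piece is absorbed by $\gamma\|\partial_{tx}h\|^2$ via Poincaré and the $L^2(0,\infty)$ bound on $\Pin$. In the second piece I split $\int_{\Gamma_{out}}u_1=1+(\int_{\Gamma_{out}}u_1-1)$. The deviation part is absorbed into the penalty term, at a cost $C\varepsilon\|\Pin-\Pout\|^2_{L^2(0,T)}$, which vanishes as $\varepsilon\to0$. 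The remaining part is nonpositive whenever $\Pout\ge\Pin$ (this is the assumption behind \eqref{en_ineq}); otherwise it is bounded on bounded time intervals. This gives uniform-in-$(\Delta t,\varepsilon)$ bounds in $\cal W(0,T)$ and the energy inequality \eqref{en_ineq}, as long as $\min h\ge\delta>0$. The latter holds up to a time $T_\delta$ that is uniform in the parameters, by the $H^1$-in-time, $H^1_x$ bound on $\partial_th$ from the structural viscosity. The penalty bound also gives $\int_{\Gamma_{out}}u_1\to1$ in $L^2(0,T)$ as $\varepsilon\to0$, which yields \eqref{uoutlet} in the limit.

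The main obstacle is compactness, because the penalty is a singular perturbation. The approach of \cite{T24} derives fractional time-derivative bounds (Nikolskii-type estimates $\int_0^{T-\tau}\|u(t+\tau)-u(t)\|^2\lesssim\tau^{s}$) by testing with time-shifted, domain-adapted versions of the solution. Here those test functions must be corrected so that their outlet flux is unchanged. I would first try subtracting a fixed smooth divergence-free lifting $\bfphi$ with $\int_{\Gamma_{out}}\phi_1=1$, supported near $\Gamma_{out}$ and away from $\Gamma_h$. This is possible because $h=H$ near the clamped ends and $\min h\ge\delta$. The correction then makes the penalty term contribute only $\varepsilon^{-1}(\int u_1-1)\cdot(\text{deviation})$, which is controlled by the penalty dissipation independently of $\varepsilon$. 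With these bounds, an Aubin--Lions-type argument on the moving domains gives strong $L^2$ convergence of $(\bu,\partial_th)$, following \cite{T24}.

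Passing to the limit in the weak formulation needs two further ingredients. Traces converge because of the strong convergence of $h$ in $C([0,T];C^1)$. The tangential slip terms converge weakly thanks to the dissipation bound. Test functions satisfying \eqref{phioutlet} are approximated by $\bq_\varepsilon$ adapted to the approximate domains, with the flux fixed to $1$ again via $\bfphi$. Finally, the solution is extended in $\delta$ by a standard continuation argument, since the a priori bound $K_0$ is independent of time. Letting $\delta\to0$, the maximal time $T_0$ is characterized by $\min_x h(x,t)\to0$ as $t\to T_0$, and the continuity of $h$ gives $h(x_0,T_0)=0$ for some $x_0$, which lies in $(0,L)$ because $h=H$ at the endpoints.
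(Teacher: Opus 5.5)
There is a genuine gap, and it is in your penalty. Write $D_\varepsilon(t):=\int_{\Gamma_{out}}(u_\varepsilon)_1-1$ and $\Pi_\varepsilon:=D_\varepsilon/\varepsilon$. Your term $\frac1\varepsilon\int_0^T D_\varepsilon\int_{\Gamma_{out}}q_1$ is linear in $\bq$, so $\Pi_\varepsilon$ acts as an additional unknown outlet pressure. Two things then break.

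First, the energy estimate. Testing with $\bu_\varepsilon$, whose outlet flux is $1+D_\varepsilon$, gives $\frac1\varepsilon\int_0^T D_\varepsilon^2+\int_0^T\Pi_\varepsilon\,dt$, not the square alone. The second term is the work of the multiplier, and nothing you list controls it (Young gives only $T/(2\varepsilon)$). So neither your $\varepsilon$-uniform energy bound nor $\|D_\varepsilon\|^2_{L^2(0,T)}\le C\varepsilon$ is established, and your Nikolskii step rests on the latter.

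Second, and decisively, the limit. For a test function with $\int_{\Gamma_{out}}q_1=1$, your penalty equals $\int_0^T\Pi_\varepsilon\,dt$, and normalizing the flux of $\bq_\varepsilon$ to one does not remove it. Even granting compactness, your limit satisfies \eqref{weak_form_gen} only on flux-zero test functions, with an unknown $\Pi=\lim\Pi_\varepsilon$ entering for all others. The theorem asserts the identity on flux-one test functions with no such term. Section~\ref{sec:contact} uses exactly that, since the pressure drop enters only through a test function with nonzero outlet flux. Nothing in your argument makes $\Pi$ vanish.

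A minor point: your lifting cannot be supported near $\Gamma_{out}$ and away from $\Gamma_h$. A solenoidal field with zero normal trace on $\Gamma_h\cup\Gamma_b\cup\Gamma_{in}$ has zero flux through $\Gamma_{out}$. A strip field $(\varphi(y),0)$ with $\mathrm{supp}\,\varphi\subset(0,\delta)$ works while $\min h\ge\delta$. Also, $h=H$ holds only at $x=0,L$, not near them.

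The paper avoids the multiplier by penalizing with $\frac1\varepsilon\int_0^T D_\varepsilon\bigl(\int_{\Gamma_{out}}q_1-1\bigr)$ in \eqref{weak_form_pen}. This term vanishes on flux-one test functions and gives exactly $\frac1\varepsilon\int_0^T D_\varepsilon^2$ when tested with $\bu_\varepsilon$. The shift $-(\kappa-1)$ in its Nikolskii test functions makes their flux $1+\int_{t-\kappa}^tD_\varepsilon$.

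Do not simply adopt it, however. That term is affine rather than linear in $(\bq,\zeta)$. Testing \eqref{weak_form_pen} with $(\bq,\zeta)=(0,0)\in\mathcal{T}(0,T)$ gives $\int_0^T D_\varepsilon\,dt=0$, so the term relaxes nothing and cannot come out of a Lax--Milgram splitting step.

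The obstruction is in the statement itself: requiring $\Pi\equiv0$ means prescribing the pressures and the flux simultaneously. Two checks show this.
\begin{itemize}
\item For $\bu_0=0$, $v_0=0$, $h_0\equiv H$, $\Pin\equiv0$ one has $K_0=0$. Then \eqref{en_ineq} forces $\bu\equiv0$, contradicting \eqref{uoutlet}.
\item For flux-compatible data under \eqref{pressure_difference}, which the derivation of \eqref{en_ineq} uses, formally test with $\bu$ itself (it has flux one). This returns the computation after \eqref{en_eqn} with the term $\int_0^T(\Pin-\Pout)\,dt\le-p_0T$ kept. So no such solution can exist beyond time $K_0/p_0$. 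That holds even for a stiff plate, where the energy bound keeps $h$ away from $0$ and no contact can occur.
\end{itemize}

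No sharper estimate will close your missing step. What your penalty can honestly deliver is the flux-driven problem with $\Pi$ as an extra unknown, tested on flux-zero functions, with the work of $\Pi$ included in the energy balance.
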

Before moving on, we list below some important consequences of Theorem \ref{thm:exist}, specifically those of the energy inequality \eqref{en_ineq}:
   \begin{itemize} 
 \item We emphasize that the constant $K_0>0$ in \eqref{en_ineq} and hence all the forthcoming constants, do not depend on $T$.
   \item Note that due to the Sobolev embedding $H^2(0,L) \hookrightarrow C^{1,\frac12}([0,L])$ in one-dimension, any structure displacement satisfying the bounds \eqref{en_ineq} also satisfies
\begin{align}\label{hbounded}
 \|h\|_{L^\infty(0,T;C^{1,\frac12}([0,L]))} \leq \tilde C,
\end{align}
where the constant $\tilde C>0$ only depends on $L$ and $K_0$, and is independent of $T$.
\item Note that, since 
$$\bu|_{\Gamma_h} = (\bu|_{\Gamma_h}\cdot\bftau^h)\bftau^h + (\bu|_{\Gamma_h}\cdot \bn^h)\bn^h= ((\bu|_{\Gamma_h}-(0,\partial_th))\cdot\bftau^h)\bftau^h + (0,\partial_th),$$ we have, due to \eqref{en_ineq}, that
\begin{align}\label{uboundary}
    \|\bu\|_{L^2(0,T; \bL^2(\Gamma_h))} \leq     \|(\bu-(0,\partial_th))\cdot\bftau^h\|_{L^2(0,T; L^2(\Gamma_h))} + \|\partial_th\|_{L^2(0,T; L^2(0,L))} \leq C,
\end{align}
where $C>0$ depends only on the given data and is independent of $T$.
Here, we have additionally used the boundedness of the normal and the tangential defined in \eqref{normaltangent} i.e. the fact that for any $t\in [0,T], x\in[0,L]$ we have
\begin{align}\label{normtangbound}
    |\bftau^h|\leq C\qquad\text{and}\qquad|\bn^h| \leq C,
\end{align}
which is follows from \eqref{hbounded}.\\
The same can be deduced about the trace of $\bu$ on the bottom boundary $\Gamma_b$.
We remark here the importance of these bounds by noting that, the constant $C_{F_h}$ in the trace inequality $ \displaystyle  \|\bu\|_{L^2(0,T; \bL^2(\Gamma_h))} \leq C_{F_h}   \|\bu\|_{L^2(0,T; \bH^1(F_h))} $ depends inversely on the height $h$ of the tube. The lack of bounds on this constant in the near-contact case, is why using only the energy inequality \eqref{en_ineq} is necessary.
\end{itemize}

We will next state our first main result concerning the spatial regularity of the structure displacement.
\begin{theorem}\label{thm:regularity}
 Consider the weak solution $(\bu,h)$ obtained in Theorem \ref{thm:exist}. Then, until the first instance of contact $T_0>0$, the structure displacement satisfies the following additional spatial regularity:
        \begin{align} \label{reg_spatial}
        h-H\in {L^2(0,T;H_0^3(0,L))},\qquad\text{ for every } \,T<T_0.
        \end{align}
\end{theorem}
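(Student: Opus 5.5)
We prove \eqref{reg_spatial} by a difference-quotient (Nirenberg) argument in the $x$-variable. The test pair is built from the structure's own difference quotient. Fix $T<T_0$. By continuity of $h$ and the definition of $T_0$, there is $\delta>0$ with $h\ge \delta$ on $[0,L]\times[0,T]$; all constants below may depend on $\delta$. The whole argument is a gain of one derivative: the energy inequality \eqref{en_ineq} gives $h\in L^\infty H^2$, and we want $L^2H^3$.

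\textbf{Localization.} Since $\partial_{xxxx}h$ appears only in the weak sense and the boundary is clamped, we first localize. Take a cutoff $\chi\in C_c^\infty(0,L)$ and set
\[
\zeta=-D_{-s}\big(\chi^2 D_s(h-H)\big),\qquad D_sg(x)=\frac{g(x+s)-g(x)}{s}.
\]
Near the endpoints we use the clamped conditions \eqref{bc_h} and $h\ge\delta$ instead, so that interior and boundary-adapted estimates cover $[0,L]$. Alternatively, one can use the tangential difference quotient in $x$ and recover the normal derivative from the equation near $x=0,L$.

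\textbf{The fluid component.} We need $\bq$ with $\bq\cdot\bn^h=(0,\zeta)\cdot\bn^h$ on $\Gamma_h$, $q_2=0$ on $\Gamma_b,\Gamma_{in},\Gamma_{out}$, $\nabla\cdot \bq=0$ and $\int_{\Gamma_{out}}q_1=1$. Because $h\ge\delta$, we take the standard stream-function construction
\[
\bq=\nabla^\perp\Big(\psi_0(x,y)-\frac{y}{h(x,t)}\int_0^x\zeta\Big)\cdot\text{(smooth cutoff in }y\text{ near }\Gamma_h)
\]
plus a fixed flux carrier. We also subtract $\int_0^L\zeta$ compatibility terms, so that it stays divergence free with compatible fluxes. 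Since $\chi$ is compactly supported, $\zeta$ is supported away from the inlet and outlet.

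\textbf{Testing and the key bound.} Plugging $(\bq,\zeta)$ into \eqref{weak_form_gen}, discrete integration by parts gives:
\begin{itemize}
\item the elastic term $\alpha\int_0^T\|\chi D_s\partial_{xx}h\|^2$, which is what we want to bound;
\item the viscous term $\gamma\int \partial_{xt}h\,\partial_x\zeta$, which becomes $\tfrac{\gamma}{2}\|\chi D_s\partial_x h(T)\|^2$ plus controlled terms, good sign;
\item the inertial term $-\int\partial_th\,\partial_t\zeta$, bounded by $\|\partial_t h\|^2_{L^2H^1}\le K_0$ after summation by parts. We can use $\|D_s\partial_th\|_{L^2}\le\|\partial_{xt}h\|$ here because $\partial_{xt}h\in L^2L^2$.
\end{itemize}
The fluid terms all involve $\bq$, which satisfies
\[
\|\bq\|_{H^1(F_h)}+\|\bq\|_{L^2(\Gamma_h)}\le C_\delta\big(1+\|\zeta\|_{H^1}\big),\qquad \|\zeta\|_{H^1}\le C\|\chi D_s\partial_{xx} h\|_{L^2}+C.
\]
This bound on $\bq$ in terms of $\zeta$ is what the dependence on $\delta$ buys. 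With it, the viscous, convective, slip and pressure terms are bounded by $\epsilon\|\chi D_s\partial_{xx}h\|^2_{L^2L^2}+C_{\epsilon,\delta}$, using \eqref{en_ineq}, \eqref{uboundary} and the $L^4$ interpolation of $\bu$ in 2D. The convective term is the roughest: it needs $\bu\in L^4L^4$ and $\nabla\bq\in L^2L^2$, which is fine since $\bu\in L^\infty L^2\cap L^2H^1$. We then absorb the $\epsilon$-term into the elastic term and let $s\to0$ to get $\chi\partial_{xxx}h\in L^2(0,T;L^2)$.

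\textbf{Main obstacle.} The hardest term is $-\int\bu\cdot\partial_t\bq$. The test function $\bq$ depends on $\partial_t\zeta$ and on $\partial_t h$, and $\partial_t\zeta\sim D_s^2\partial_t h$ is not controlled. We plan to deal with it by writing $\partial_t\bq$ through the stream function, moving one difference quotient onto $\bu$ with discrete integration by parts in $x$, and using $\partial_t h\in L^2H^1$. What is left is of the form
\[
\int \bu\cdot\nabla^\perp\Big(\frac{y}{h}\int_0^x D_{-s}(\chi^2D_s\partial_th)\Big).
\]
Here $\int_0^x$ removes one derivative, so it is bounded by
\[
\|\bu\|_{L^2H^1}\,\|\partial_{xt}h\|_{L^2L^2}\,C_\delta .
\]
Summing the interior and near-boundary estimates gives $h-H\in L^2(0,T;H^3)$. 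The traces $\partial_x h=0$ and $h=H$ at the endpoints persist, which gives membership in $H^3\cap H^2_0$, the space written $H^3_0$ in the statement.
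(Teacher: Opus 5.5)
\textbf{The gap: the endpoints.} What your argument actually proves is $\chi\,\partial_{xxx}h\in L^2(0,T;L^2(0,L))$ for $\chi\in C_c^\infty(0,L)$, that is, $h\in L^2(0,T;H^3_{\mathrm{loc}}(0,L))$. Neither remedy you sketch extends this to $[0,L]$.

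On an interval the endpoint has no tangential direction, so there are no tangential difference quotients to take. An $x$-difference quotient that reaches $x=0$ gives a $\zeta$ that does not vanish there. For example, extending $h-H$ by zero gives $\zeta(0)=-\chi(0)^2(h(s)-H)/s^2\approx-\tfrac12\chi(0)^2\partial_{xx}h(0)$. Such a $\zeta$ is not an admissible test function for the clamped beam (nor compatible with $q_2=0$ on $\Gamma_{in/out}$ at the corners).

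Recovering $\partial_x^4h$ from the equation near the endpoint would need $\partial_{tt}h$ and the fluid load $f$ in something like $L^2(0,T;H^{-1})$ there. But $\partial_{tt}h$ is only a distribution in time, and $f$ exists only through the weak form; that is exactly why one must test and integrate by parts in time. So a globally admissible test function, or some other genuine boundary argument, is missing.

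\textbf{How the paper handles the endpoints.} It does not localize. It tests on the whole interval with $\overline\eta=\partial_{xx}h$ and $\overline\bfphi=\nabla^\perp(\partial_xh\,\Psi(y/h))$. The boundary contributions (the terms $[\partial_{xx}h\,\partial_{xxx}h]_0^L$, and the vanishing of $\overline\bfphi$ on $\Gamma_{in/out}$) are removed by the condition $\partial_{xx}h=0$ at $x=0,L$, which it imposes in its $\epsilon\partial_x^6h$ regularization. That same condition is the source of the subscript $0$ in $H^3_0$, which your argument cannot produce. You replaced the target by $H^3\cap H^2_0$ without saying so. That is arguably the natural space for a clamped end, since the bending moment $\alpha\partial_{xx}h$ need not vanish there. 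But it means the paper's endpoint mechanism is not available to you either.

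\textbf{The interior part is sound.} On one point it is cleaner than the paper's argument. For fixed $s$, your pair $\zeta=-D_{-s}(\chi^2D_s(h-H))$, $\bq$ is directly admissible, and the quantity $\|\chi D_s\partial_{xx}h\|_{L^2(0,T;L^2)}$ you absorb is finite. The paper instead absorbs $\frac{\alpha}{4}\|h\|^2_{L^2(0,T;H^3)}$, which is legitimate only through its regularization.

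Your treatment of $\int\bu\cdot\partial_t\bq$ is the paper's device $P_x\partial_t\overline\phi_2=\partial_t\overline\psi$: pass to the time derivative of the stream function, integrate by parts in $x$, and bound the $\Gamma_h$ term by \eqref{uboundary}. The remaining difference is minor. The paper's $\Psi$ satisfies $\Psi'(0)=\Psi'(1)=0$, so its fluid test function is no-slip compatible and the slip and pressure terms drop out. In your version those terms are present but bounded.
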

We will now state our second main result which proves the collapse of the compliant tube wall in finite time.
\begin{theorem}\label{mainresult}
Assume that the dynamic pressure data at the inlet and outlet satisfy $P_{in}\in L^2_{}(0,\infty)$ (i.e. \eqref{pressureL^2}) and $P_{out}\in L^2_{loc}(0,\infty)$.
We also suppose that \eqref{pressure_difference} holds, that is: For some $p_0>0$, the given dynamic inlet and outlet pressures satisfy,
$$
    P_{out}(t)-P_{in}(t) \geq p_0,\qquad \text{ for all }t\geq 0.
$$
Assume that the initial data satisfies $h_0-H\in H_0^2(0,L)$, $\bu_0\in \bL^2(F_{h_0})$ and $v_0\in H^1_0(0,L)$,
   then for the weak solution $(\bu,h)$ constructed in Theorem \ref{thm:exist}, there exists $0<T_0<\infty$, and $x_0\in (0,L)$ such that $h(x_0,T_0)=0$.
\end{theorem}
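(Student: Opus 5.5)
Argue by contradiction: suppose the weak solution of Theorem \ref{thm:exist} never touches $\Gamma_b$, so $h>0$ on $[0,L]\times[0,T]$ for every $T<\infty$. Then Theorem \ref{thm:regularity} applies on every $[0,T]$ and gives $h-H\in L^2(0,T;H^3_0(0,L))$, hence $\partial_{xx}h\in L^2(0,T;L^\infty)$. The plan is to insert into the weak formulation \eqref{weak_form_gen} a test pair $(\bq,\zeta)$ built from $h$ itself. The pressure drop \eqref{pressure_difference} will then produce a term growing linearly in $T$. Every other term will be bounded by a constant depending only on $K_0$ from \eqref{en_ineq}, uniformly in $T$ and in $\min h$. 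Letting $T\to\infty$ gives the contradiction, so $T_0<\infty$.

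The test pair will be a flux-carrying field for the frozen domain $F_{h(t)}$. Let $\Phi(x,y,t)=\psi\bigl(y/h(x,t)\bigr)$, where $\psi\in C^\infty([0,1])$, $\psi(0)=0$, $\psi(1)=1$, and $\psi'$ is concentrated near $s=1$. Set $\bq=\nabla^\perp\Phi=(\partial_y\Phi,-\partial_x\Phi)$ and $\zeta=\partial_t\!\int$-type correction chosen so that $\bq\cdot\bn^h=(0,\zeta)\cdot\bn^h$ on $\Gamma_h$. Since $\Phi$ is the constant $1$ on $\Gamma_h$, the only remaining freedom is $\zeta=0$ up to an $x$-dependent correction. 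So I will take $\zeta\equiv0$: $\bq$ is then tangent to $\Gamma_h$, it vanishes in normal component on $\Gamma_b$, and it has unit flux through $\Gamma_{out}$ (and through $\Gamma_{in}$), so \eqref{phioutlet} holds. Then $q_1=\psi'(y/h)/h$ and $q_2$ involves $\partial_xh$, so that $\nabla\bq$ involves $\partial_{xx}h$; this is exactly where Theorem \ref{thm:regularity} is needed to justify $\bq\in\mathcal T(0,T)$. Time derivatives bring in $\partial_th$, which is controlled by the energy. With this choice the right-hand side of \eqref{weak_form_gen} becomes $\int_0^T(\Pin-\Pout)\,dt\le -p_0T$.

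Next I bound each left-hand term using only \eqref{en_ineq}, \eqref{hbounded} and \eqref{uboundary}, never a trace constant depending on $\min h$. Such bounds involve negative powers of $h$ (for example $\int_{F_h}|\nabla\bq|^2\sim\int_0^L h^{-3}$), so the cutoff profile must be chosen so that these weights are tamed.

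This is the main obstacle. A pure Poiseuille-type choice fails, because $\int h^{-3}$ blows up near contact; that blow-up is the source of the no-slip paradox. The slip conditions save us. With Navier slip on both walls the natural profile is plug flow, $q_1\approx1/h$ independent of $y$, so $\nabla\bq$ contains only $x$-derivatives, of size $|\partial_xh|/h^2$ and $|\partial_{xx}h|$. Dissipation is then carried by the boundary terms $\frac1{\beta_b}\int(\bu\cdot\bftau^b)(\bq\cdot\bftau^b)$ and $\frac1{\beta_s}\int\ldots$, which require only $\int_0^L h^{-1}$ and $\int\bu\cdot\bq$ bounds.

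I would therefore take $\Phi=y/h$ exactly and bound the terms one by one:
\begin{itemize}
\item $\int\nabla\bu:\nabla\bq$ by Cauchy--Schwarz, $\|\nabla\bu\|_{L^2}\,\|\partial_xh/h^{3/2}\|_{L^2}$, plus similar pieces;
\item the time-derivative term, by integrating by parts in time and moving $\partial_t$ onto $h$;
\item the convective terms, via the $L^2$ bounds \eqref{uboundary} on traces together with $L^\infty_tL^2_x$ control of $\bu$.
\end{itemize}
The remaining difficulty is controlling integrals like $\int_0^T\!\int_0^L|\partial_xh|^2h^{-3}$ uniformly. For this I would use that $h\ge0$ is $C^{1,1/2}$ with bounded norm, so near a minimum $|\partial_xh|^2\lesssim h$. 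If these weights still diverge, I would instead cap $q_1$ via $\Phi=\min(y/h,\,y/\delta)$-type truncations and exploit the fact that the flux constraint only needs to hold at $\Gamma_{out}$, accepting a $T$-independent constant in place of exact plug flow.
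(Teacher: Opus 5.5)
Your argument has a genuine gap, and it lies in the choice of test pair rather than in the estimates. With $\Phi=1$ on $\Gamma_h$ and $\zeta\equiv0$, your $\bq$ carries unit flux through \emph{every} vertical section of $F_{h(t)}$. Whatever profile $\psi$ you choose, Cauchy--Schwarz on each section then gives $\int_{F_h}|\bq|^2\ge\int_0^Lh^{-1}$, and this is needed for $\int_{F_h}\bu\cdot\bq$ and the convective terms.

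For the plug profile $\Phi=y/h$ you also get $\|q_1\|^2_{L^2(\Gamma_b)}=\int_0^Lh^{-2}$, which is what the bottom slip term requires. So slip does not reduce matters to $\int h^{-1}$. In addition, $\|\nabla\bq\|^2_{L^2(F_h)}\ge2\int_0^L|\partial_xh|^2h^{-3}$.

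None of these is bounded uniformly in $\delta=\min_xh$. At an interior minimum point $x_0$, \eqref{hbounded} gives $h(x)\le\delta+C|x-x_0|^{3/2}$, hence $\int_0^Lh^{-1}\gtrsim\delta^{-1/3}$. Bounding $|\partial_xh|$ by a power of $h$ only trades one negative power of $h$ for another.

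What your scheme actually yields is $p_0T\le C_{\delta_T}(1+\sqrt T)$, with $\delta_T=\min_{[0,L]\times[0,T]}h$ and $C_\delta\to\infty$ as $\delta\to0$. Under the contradiction hypothesis this only shows that $\min_xh(t)$ cannot stay bounded away from zero. It does not exclude $\min_xh(t)\to0$ as $t\to\infty$ without contact, and that is exactly what has to be ruled out to get $T_0<\infty$. Your truncation $\min(y/h,y/\delta)$ does not fix this as stated: it destroys $\Phi=1$ on $\Gamma_h$, so $\zeta\equiv0$ is no longer admissible, and you do not say what replaces it.

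Following that thread leads to the missing idea, which is the core of the paper's proof: let the structure test function absorb the flux. The paper uses the stream function $h(x,t)\,\Phi(x,y/h(x,t),t)$, where $\Phi$ is a cubic in $\xi=y/h$ with $\Phi(\cdot,0,\cdot)=0$, $\Phi(\cdot,1,\cdot)=1$, and slip-adapted coefficients satisfying $|a_h|,|b_h|\le Ch$. Its trace on $\Gamma_h$ is $h=H+\int_0^x\partial_xh$, which forces $\eta=\partial_xh$.

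The flux through the section at $x$ is then $h(x,t)$. It vanishes where the gap closes but equals $H$ at $\Gamma_{in/out}$, so the pressure term still grows like $p_0HT$. Meanwhile $\phi_1=-\Phi_\xi$ is bounded and $|\nabla\bfphi|\le C(1+|\partial_{xx}h|)$ with $C$ independent of $\min h$, so every fluid term is $O(\sqrt T)$.

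The price is the structure terms generated by $\eta=\partial_xh$. It is the admissibility of $\eta\in L^2(0,T;H^2(0,L))$ that requires Theorem \ref{thm:regularity}. In your construction, $\nabla\bq$ only needs $\partial_{xx}h\in L^\infty(0,T;L^2(0,L))$, which the energy already provides.

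As an aside, the linear-in-$T$ term you are after is already present in the energy balance. By \eqref{fluxdF} and $\int_{\Gamma_{out}}u_1=1$, the pressure work equals $\Pin\int_0^L\partial_th+(\Pin-\Pout)\le\Pin\int_0^L\partial_th-p_0$. The paper discards the resulting $-p_0T$ when deriving \eqref{en_ineq} and \eqref{en_ineq_pen}. Keeping it gives $p_0T\le CK_0$ for every $T<T_0$, with no dependence on $\min h$ at all.
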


In what follows, we will prove these three theorems.
Firstly, in Section \ref{sec:exist}, we will give a brief proof of Theorem \ref{thm:exist}. This existence proof describes how the boundary condition \eqref{bc_inout}$_2$ is enforced. Then we will prove the extra spatial regularity result Theorem \ref{thm:regularity} in Section \ref{sec:regularity}. This result is crucial in the construction of an appropriate test function in the proof of Theorem \ref{mainresult} undertaken in Section \ref{sec:contact}.

\section{Existence result: Proof of Theorem \ref{thm:exist}}\label{sec:exist}

In this section, we present a brief proof of Theorem \ref{thm:exist} outlining all the key steps. The strategy we employ relies on\\
\indent (1) first proving the existence of a local-in-time weak solution,\\
\indent (2) and then extending this solution up to the time of first contact $T_0>0$.\\
We begin with step (1). While several components of our proof build on methods established in prior work, we explain here how the outlet boundary flow condition \eqref{uoutlet}, which has not been previously studied, is enforced.

To incorporate this restriction, we introduce a penalized weak formulation that approximates the weak form of the original system \eqref{fluid_eqn}-\eqref{dyn_bc}. 
That is, for any $\epsilon>0$, we augment the weak form \eqref{weak_form_gen} with a penalty term enforcing the flow rate condition \eqref{uoutlet} in the limit $\epsilon\to 0$. More precisely, for any $T>0$, we seek a solution $(\bu_\epsilon,h_\epsilon)$ satisfying the following weak form for any $(\bq,\zeta)\in\cal{T}(0,T)$:
 \begin{multline} \label{weak_form_pen}
   \left[\int\limits_{F_{h_\epsilon}(t)} \bu_{\epsilon}(t)\cdot\bq(t)+\int\limits_{0}^{L}\partial_th_\epsilon(t)\zeta(t)  \right]_{t=0}^{t=T} 
     -\int\limits_0^T\int\limits_{{F_{h_\epsilon}(t)}}\bu_{\epsilon}\cdot\partial_t \bq   
     + \int\limits_0^T\int\limits_{{F_{h_\epsilon}(t)}}\mu \nabla\bu_{\epsilon} : \nabla\bq
     \\
    +\frac12\int\limits_0^T\int\limits_{{F_{h_\epsilon}(t)}} (\bu_{\epsilon}\cdot\nabla)\bu_{\epsilon}\cdot\bq -\frac12\int\limits_0^T\int\limits_{{F_{h_\epsilon}(t)}} (\bu_{\epsilon}\cdot\nabla)\bq\cdot\bu_{\epsilon} -\frac12\int\limits_0^T\int\limits_{\Gamma_{h_\epsilon}(t)}(\bu_{\epsilon}\cdot\bq)(\bu_{\epsilon}\cdot \bn^h_{\epsilon}) \\
   +\frac{1}{\beta_b}\int\limits_0^T\int\limits_{\Gamma_b}(\bu_{\epsilon}\cdot\bftau_{\epsilon}^b)\cdot (\bq\cdot\bftau_{\epsilon}^b) 
        + \frac{1}{\beta_s}\int\limits_0^T\int\limits_{\Gamma_{h_\epsilon}(t)}((\bu_{\epsilon}-\bv_{\epsilon})\cdot\bftau_{\epsilon}^h)\cdot ((\bq-\boldsymbol{\zeta})\cdot\bftau_{\epsilon}^h)+\frac1{\epsilon}\int_0^T\left(\int_{\Gamma_{out}}(u_\epsilon)_1-1\right)\left(\int_{\Gamma_{out}}q_1-1\right)\\
         - \int\limits_0^T\int\limits_{0}^{L}\partial_t h_{\epsilon}\partial_t\zeta 
    + \int\limits_0^T\int\limits_{0}^{L}\alpha \partial_{xx}h_{\epsilon}\partial_{xx}\zeta 
    + \int\limits_0^T\int\limits_{0}^{L}\gamma\partial_{xt}h_\epsilon\partial_x\zeta 
= \int\limits_0^T \int\limits_{\Gamma_{in}} \Pin q_1 
	-\int\limits_0^T \int\limits_{\Gamma_{out}}  \Pout q_1.
    \end{multline}
For any fixed $\epsilon>0$, the existence of a weak solution $(\bu_{\epsilon},h_\epsilon)$, satisfying \eqref{weak_form_pen} locally-in-time weak is obtained by semi-discretizing the equation in time and applying a Lie-Trotter splitting scheme in the spirit of \cite{MC16, T24}. 
Following the steps in \cite{MC16, T24}, we can see that this solution satisfies the following energy inequality until the time of first contact $T^\epsilon_0$ which is proved to be strictly positive: For any $T<T^\epsilon_0$, we have
\begin{equation}
  \begin{split}\label{en_ineq_pen}
        &\sup_{t\in[0,T]} \left[ \int\limits_{{F_{h_\epsilon}(t)}}  |\bu_{\epsilon}(t)|^2
    +\int\limits_0^L  |\partial_t h_\epsilon(t)|^2 
    + \alpha \int\limits_0^L |\partial_{xx}h_{\epsilon}(t)|^2
    \right] 
    + \mu \int\limits_0^T \int\limits_{F(s)} |\nabla\bu_{\epsilon}|^2
    \\ 
    & + \frac{1}{\beta_b}\int\limits_0^T \int\limits_{\Gamma_b} |\bu_{\epsilon}\cdot \bftau_{\epsilon}^b|^2
    + \frac{1}{\beta_s}\int\limits_0^T \int\limits_{\Gamma_{h_\epsilon}} |(\bu_{\epsilon}-\bv_{\epsilon})\cdot \bftau_{\epsilon}^h|^2 +\frac{1}{2\epsilon}\int_0^T\left(\int_{\Gamma_{out}}(u_\epsilon)_1-1\right)^2
    + \frac{\gamma}2 \int\limits_0^T \int\limits_0^L |\partial_{xt}h_\epsilon|^2 
    \\  
    &\leq \frac{1}{2} \left[\int\limits_{F_{h_0}} |\bu_0|^2 
    + \int\limits_0^L  |v_0|^2
    + \alpha \int\limits_0^L |\partial_{xx}h_0|^2
    \right]
+  C {\epsilon}{}  (\|\Pin\|_{L^2(0,T)}^2+\|\Pout\|_{L^2(0,T)}^2) + \frac{C}{\gamma}\|P_{in}\|^2_{L^2(0,T)},
  \end{split}
\end{equation}
Recall our pressure data assumption that states that
$$\|\Pin\|_{L^2(0,T)} <C_0, \quad \|\Pout\|_{L^2(0,T)}< C_T,$$
for some $C_T>0$ depending on $T$.

In deriving this energy inequality, the inlet-outlet boundary terms appearing on the right hand side of \eqref{weak_form_pen} are handled as follows.
As earlier, the incompressibily condition implies that $\bu_{\epsilon}$ satisfies \eqref{fluxdF}, which then implies that,
\begin{align*}
    \int_0^T\left(\Pin\int_{\Gamma_{in}}(u_\epsilon)_1- \Pout\int_{\Gamma_{out}}(u_\epsilon)_1 \right)   
    \leq \int_0^T\left(\Pin\int_0^L  \partial_t h_\epsilon+ (\Pin-\Pout)\left(\int_{\Gamma_{out}}(u_\epsilon)_1-1\right)-p_0\right)
\end{align*}
Since $p_0>0$, we thus obtain, for some $C>0$ independent of $\epsilon$ that,
\begin{align*}
 \int_0^T&\left(\Pin\int_{\Gamma_{in}}(u_\epsilon)_1- \Pout\int_{\Gamma_{out}}(u_\epsilon)_1 \right) \leq \frac{C}{\gamma} \|\Pin\|_{L^2(0,T)}^2 +\frac{\gamma}{2}\int_0^T\|\partial_{tx}h_\epsilon\|^2_{L^2(0,L)}\\
    &\qquad\qquad+C {\epsilon}  \left(\|\Pin\|_{L^2(0,T)}^2+\|\Pout\|_{L^2(0,T)}^2\right) +\frac{1}{2\epsilon}\int_0^T\left(\int_{\Gamma_{out}}(u_\epsilon)_1-1\right)^2.
\end{align*}
The second and fourth terms on the right hand side of this inequality are absorbed by the left hand side, which gives us our desired energy inequality \eqref{en_ineq_pen} for the penalized problem.

We observe that one consequence of the energy \eqref{en_ineq_pen} is that,
\begin{align}
   \int_0^T\left(\int_{\Gamma_{out}}(u_\epsilon)_1-1\right)^2 \leq C\epsilon,
\end{align}
for some constant $C>0$ independent of $\epsilon$. This bound tells us that we recover the desired boundary behavior \eqref{uoutlet} in the limit $\epsilon\to0$.

Hence, at this stage our main goal is to pass $\epsilon\to0$ which, in the limit, will give us a local-in-time weak solution with the desired properties stated in Theorem \ref{thm:exist}.
However, this is not straightforward as the addition of the penalty term singularly perturbs the original problem \eqref{weak_form_gen}.
In particular, we emphasize that the penalty term is a hurdle in obtaining compactness of the approximate solutions. 
Therefore, in the following lemma we show how compactness result is obtained by showing that the fractional time derivatives of the fluid and structure velocities are bounded {\it independently} of $\epsilon$. As mentioned above, we first prove the existence of a local-in-time weak solution to the original problem \eqref{weak_form_gen} and then extend the solution in time until there is collapse of the elastic tube. Hence, for the purposes of passing $\epsilon\to0$, it suffices to analyze the approximate solutions $(\bu_\epsilon,h_\epsilon)$ only locally-in-time.

To that end, we consider the time $0<T^\delta_\epsilon\leq T$ that is decided by the following condition,
\begin{align}\label{hmin1}
  0<  \delta\leq h_\epsilon(x,t),\qquad \forall x\in[0,L],\,\, t\in [0,T^\delta_\epsilon],
\end{align}
ensuring a lower bound for the height of the fluid domain. 

Thanks to this lower bounds on the height of the fluid domain, it is convenient to establish the following result in a fixed reference domain. Hence, we introduce the Arbitrary Lagrangian-Eulerian (ALE) maps $\bA_{h_\epsilon}:\sO \to F_{h_\epsilon}$ that define a diffeomorphism between the fluid domain and the fixed domain $\sO=(0,L)\times(0,1)$ until time $T^\delta_\epsilon$ (see \eqref{hmin1}). These maps are given by the solution to:
\begin{equation}\label{ALE}
  \begin{split}
       \Delta \bA_{h_\epsilon}&= 0 \quad\text{ in }\sO,\\
    \bA_{h_\epsilon} = \Gamma_{h_\epsilon} \quad&\text{ on } \Gamma, \qquad \bA_{h_\epsilon} = {\bf Id}\quad\text{ on }\partial\sO\setminus\Gamma,
  \end{split}
\end{equation}
where $\Gamma=\{(x,1):x\in(0,L)\}$.

The reason behind for choosing the above harmonic extension is because of the added spatial regularity enjoyed by these ALE maps at any time:
$$\|\bA_{h_\epsilon}\|_{\bH^{2.5}(\sO)} \leq C\|h_\epsilon\|_{H^2(0,L)}$$

We now state our temporal regularity result.
\begin{lemma}\label{lem:compact}
Let $(\bu_{\epsilon},h_\epsilon)$ be a solution to \eqref{weak_form_pen} until some time $T^\epsilon_0>0$. Then, the fluid and structure velocities $(\bu_{\epsilon},\partial_t h_{\epsilon})$ satisfy the following Nikolski-space\footnote{
For any $0<m<1$ and $1\leq p<\infty$, the Nikolski space is defined as:
\begin{equation}\label{Nikolski}
	{N^{m,p}(0,T;\mathcal{Y})}=\{\bu\in L^p(0,T;\mathcal{Y}):\sup_{0<\kappa< T} \frac1{\kappa^m} \|\tau_\kappa\bu-\bu\|_{L^p(\kappa,T;\mathcal{Y})}<\infty\},
\end{equation}
where $\tau_\kappa f(t)=f(t-\kappa)$.
} 
bounds, for some constant $C_\delta>0$, depending on $\delta$ but independent of $\epsilon$, 
        	\begin{align}\label{reg_temp}
			\|\bu_{\epsilon}\circ \bA_{h_\epsilon}\|_ {N^{\frac18,2}(0,T^\delta_\epsilon;\bL^2((0,L)\times(0,1)))} + \|\partial_t h_{\epsilon}\|_{N^{\frac18,2}(0,T^\delta_\epsilon;L^2(0,L))}< C_\delta,
		\end{align}
        where the time $T^\delta_\epsilon>0$ is as defined in \eqref{hmin1}.
\end{lemma}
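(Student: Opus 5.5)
We prove the Nikolski bound \eqref{reg_temp} by the standard time-shift test function argument used in \cite{T24}, adapted so that the penalty term is controlled independently of $\epsilon$. Fix $0<\kappa<T^\delta_\epsilon$. Work on $[0,T^\delta_\epsilon]$, where \eqref{hmin1} makes the ALE maps $\bA_{h_\epsilon}$ uniformly (in $\epsilon$) bi-Lipschitz. The energy inequality \eqref{en_ineq_pen} then gives $\epsilon$-independent bounds on $\bu_\epsilon\circ\bA_{h_\epsilon}$ in $L^\infty_tL^2\cap L^2_tH^1$, on $h_\epsilon$ in $L^\infty_tH^2\cap W^{1,\infty}_tL^2\cap H^1_tH^1_0$, and on $\epsilon^{-1/2}(\int_{\Gamma_{out}}(u_\epsilon)_1-1)$ in $L^2(0,T)$.

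\textbf{Test pair.} We test \eqref{weak_form_pen} on the time interval $[t-\kappa,t]$ and integrate in $t\in(\kappa,T^\delta_\epsilon)$. The test pair is built from the difference
\[
\bigl(\bu_\epsilon(t)\circ\bA_{h_\epsilon(t)}-\bu_\epsilon(t-\kappa)\circ\bA_{h_\epsilon(t-\kappa)},\ \partial_th_\epsilon(t)-\partial_th_\epsilon(t-\kappa)\bigr).
\]
This difference is not admissible as it stands. Its fluid component must be divergence free on $F_{h_\epsilon(s)}$ for $s\in[t-\kappa,t]$, and the pair must match $\zeta$ in the normal direction on $\Gamma_{h_\epsilon(s)}$. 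We therefore
\begin{itemize}
\item mollify the structure component in space at scale $\kappa^{a}$;
\item transport the fluid component to $F_{h_\epsilon(s)}$ through a Piola-type transformation associated with $\bA_{h_\epsilon(s)}$, which preserves divergence-freeness;
\item add a correction, compactly supported near $\Gamma_{h_\epsilon}$, that fixes the normal trace, together with a Bogovskii-type correction for the residual divergence;
\item subtract a fixed divergence-free profile $\bq_{out}$ with $\int_{\Gamma_{out}}(q_{out})_1=1$ times the outlet flux of the difference.
\end{itemize}
The last step makes the test function satisfy the outlet flux requirement $\int_{\Gamma_{out}}q_1=0$ (for the homogeneous difference). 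Since $\delta$ bounds the geometry from below, every correction is bounded in $H^1$ by $C_\delta$ times the $H^1$ norm of the difference. The structure correction costs a factor $\kappa^{-a}$ in $H^2$, and the mismatch between $h_\epsilon(s)$ and $h_\epsilon(t)$ is controlled by $\kappa^{1/2}\|\partial_th_\epsilon\|_{L^2H^1}$.

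\textbf{Estimates.} Plugging the pair in, the time-derivative terms yield $\int_\kappa^{T}\|\tau_\kappa\bu_\epsilon-\bu_\epsilon\|^2+\|\tau_\kappa\partial_th_\epsilon-\partial_th_\epsilon\|^2$ modulo geometric commutator errors. The remaining terms (viscous, convective, slip, boundary, elastic, and pressure data) are each bounded by $C_\delta\kappa^{1/2}$ times norms controlled by the energy. The convective terms use the 2D Ladyzhenskaya inequality on the uniformly Lipschitz domains. The $\alpha\partial_{xx}h\partial_{xx}\zeta$ term is bounded by $C\kappa\cdot\kappa^{-a}$ using the mollification. The slip term on $\Gamma_h$ uses the trace bound \eqref{uboundary}.

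\textbf{The penalty term.} This is the main obstacle, because $\frac1\epsilon$ is singular. If the test function has zero outlet flux correction, the term $\frac1\epsilon\int(\int_{\Gamma_{out}}(u_\epsilon)_1-1)(\int_{\Gamma_{out}}q_1-1)$ becomes $\frac1\epsilon\int_\kappa^T\int_{t-\kappa}^t g_\epsilon(s)\,(g_\epsilon(t)-g_\epsilon(t-\kappa))\,ds\,dt$, where $g_\epsilon=\int_{\Gamma_{out}}(u_\epsilon)_1-1$. The plan is to \emph{choose} the test flux to be exactly $g_\epsilon(t)-g_\epsilon(t-\kappa)$, without subtracting $\bq_{out}$. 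By Cauchy--Schwarz the term is then bounded by $\kappa\,\epsilon^{-1}\|g_\epsilon\|^2_{L^2}\le C\kappa$, thanks to the bound $\int_0^T g_\epsilon^2\le C\epsilon$ from \eqref{en_ineq_pen}. The term is therefore $\epsilon$-independent and of order $\kappa$.

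Collecting all the estimates with $a$ chosen optimally (for example $a=3/4$) gives $\int_\kappa^T\|\tau_\kappa\cdot-\cdot\|^2\le C_\delta\kappa^{1/4}$, which is the $N^{1/8,2}$ bound \eqref{reg_temp}.
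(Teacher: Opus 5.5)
Your overall route is the Fubini-dual of the paper's. The paper applies \eqref{weak_form_pen} once on the whole interval with $\bq(t)=\int_{t-\kappa}^t(\tilde\bu_\epsilon)_{\mathrm{mod}}(s,t)\,ds$ (plus a shift), and recovers $\tilde\bu_\epsilon(t)-\tilde\bu_\epsilon(t-\kappa)$ from $\partial_t\bq$. You instead localize to windows $[t-\kappa,t]$, test with the transported and corrected difference, and integrate in $t$. Your mollification at scale $\kappa^{3/4}$ plays the role of the spectral cut-off $\Pi_M$ with $\lambda_M\sim\kappa^{-3/4}$. That part of the plan is fine, and your bookkeeping for the elastic term ($C\kappa^{1-a}$) is consistent with the rate $\kappa^{1/4}$.

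The gap is in the penalty term, which is exactly the step this lemma adds to \cite{T24}. In \eqref{weak_form_pen} the test-function factor is $\int_{\Gamma_{out}}q_1-1$, not $\int_{\Gamma_{out}}q_1$. Your window test function has outlet flux $g_\epsilon(t)-g_\epsilon(t-\kappa)$, so the factor is $g_\epsilon(t)-g_\epsilon(t-\kappa)-1$. Your display therefore omits
\[
-\frac1\epsilon\int_\kappa^{T^\delta_\epsilon}\int_{t-\kappa}^{t}g_\epsilon(s)\,ds\,dt ,
\]
and \eqref{en_ineq_pen} bounds this only by $O(\kappa\sqrt{T/\epsilon})$, which is useless as $\epsilon\to0$. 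Subtracting $\bq_{out}$ to make the flux zero, as you first considered, would not help: it leaves this same term. The paper deals with it through the constant shift $-(\kappa-1)$ in \eqref{test_f}--\eqref{test_s}. That shift gives the test function unit outlet flux on top of the time integral, so \eqref{qboundary} holds and the factor is exactly $\int_{t-\kappa}^t g_\epsilon$.

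The analogous fix in your setting is to add to each window test function a fixed admissible field $\bq_{out}$ with $\int_{\Gamma_{out}}(q_{out})_1=1$ and zero structure component. The factor then equals $g_\epsilon(t)-g_\epsilon(t-\kappa)$ exactly, and your $C\kappa$ bound applies. The extra terms produced by $\bq_{out}$ are $O(\kappa)$ after integrating in $t$:
\begin{itemize}
\item its endpoint terms telescope to $\int_{T-\kappa}^{T}-\int_0^{\kappa}$ of a bounded function;
\item all its other contributions carry the window length $\kappa$.
\end{itemize}

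A smaller point concerns the Bogovskii step. Once $v_\epsilon(t)-v_\epsilon(t-\kappa)$ is replaced by its mollification, the residual divergence no longer has zero mean. You therefore need a mean correction (or a mean-preserving mollifier); this is the role of $\mathsf{c}^M_\epsilon\bfxi_0\chi$ in the paper.
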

\begin{proof}
For the ease of readability, we temporarily set the height at inlet and outlet of the fluid domain $H=1$.

To introduce the Lagrangian formulation of the fluid problem we define,
$$\tilde\bu_{\epsilon}:=\bu_{\epsilon}\circ \bA_{h_\epsilon}.$$
For brevity, we denote 
$\nabla^{h}f=\nabla f(\nabla \bA_{h_\epsilon})^{-1},\text{ and } \nabla^{h}\cdot f=tr(\nabla^{h}f)$ and denote by $J_{h_\epsilon}$ the determinant of the map $\bA_{h_\epsilon}$.

Now, we consider the Lagrangian form of the weak formulation \eqref{weak_form_pen} given on the fixed domain $\sO$, which is obtained for $(\tilde\bu_\epsilon,h_\epsilon)$ by applying the change of variables $\bA_{h_\epsilon}$ defined in \eqref{ALE}. 

We obtain, for any $t$ for which $h_\epsilon(x,t) >0$, that the following equation holds for every $\bq\in H^1(0,T;\bL^2(\sO))\cap L^2(0,T;\bH^1(\sO))$ with $\nabla^h\cdot \bq=0$ and $\zeta \in \cal{T}_S(0,T)$ such that $\bq|_{\Gamma}\cdot\bn^h=(0,\zeta)\cdot\bn^h$:
	\begin{equation}\label{weak_form_fixed}
			\begin{split}
				&\left[{\int_{\sO}J_{h_\epsilon}(t)\tilde{\bu_{\epsilon}}(t)\bq(t) +\int_0^L\partial_t{h}_\epsilon(t)\zeta(t)}\right]_{t=0}^{t={T^\delta_*}} \\
                &
				=\int_0^{{T^\delta_*} }\int_{\sO}J_{h_\epsilon}\,\tilde{\bu_{\epsilon}}\cdot \partial_t\bq -\int_0^{T^\delta_*}\int_{\sO}J_{h_\epsilon}(\tilde{\bu_{\epsilon}}\cdot\nabla^{h }
		\tilde\bu_{\epsilon} \cdot\bq
		- \partial_t\bA_{h_\epsilon}\cdot\nabla^{h }\tilde{\bu_{\epsilon}}\cdot\bq )
		+\int_0^{T^\delta_*}\int_{\sO} 		
		\partial_tJ_{h_\epsilon}\tilde{\bu_{\epsilon}}\cdot\bq\\
        &- \nu\int_0^{{T^\delta_*} } \int_{\sO} J_{h_\epsilon}\nabla^h\tilde{\bu_{\epsilon}}\cdot \nabla^h\bq -\frac1{\alpha}\int_0^{T^\delta_*}\int_\Gamma \mathcal{S}_{h}((0,\partial_t{h}_\epsilon)-\tilde{\bu_{\epsilon}})\cdot\bftau_{\epsilon}^{h}((\bq-(0,\zeta))\cdot\bftau_{\epsilon}^{h})\\
			    &+ \int\limits_0^{T^\delta_*}\int\limits_{0}^{L}\partial_t h_{\epsilon}\partial_t\zeta 
    - \int\limits_0^{T^\delta_*}\int\limits_{0}^{L}\alpha \partial_{xx}h_{\epsilon}\partial_{xx}\zeta 
    -\int\limits_0^{T^\delta_*}\int\limits_{0}^{L}\gamma\partial_{xt}h_\epsilon\partial_x\zeta\\
    &-\frac1{\epsilon}\int_0^{T^\delta_*}\left(\int_{\Gamma_{out}} (\tilde u_\epsilon)_1-1\right)\left(\int_{\Gamma_{out}}q_1-1\right) 
				+\int_0^{{T^\delta_*} }\left( P_{{in}}\int_{\Gamma_{in}}q_1-P_{{out}}\int_{\Gamma_{out}}q_1\right).
		\end{split}\end{equation}
As earlier, we set $\bv_{\epsilon}= (0,v_\epsilon)=(0,\partial_t h_{\epsilon})$.

Our aim now is to pick appropriate test function for \eqref{weak_form_fixed} that will lead us to the semi-norm in \eqref{Nikolski}.
We fix $\kappa>0$ (see Definition \eqref{Nikolski}). Our method focuses on constructing test functions that take the following form:
\begin{align}\label{TFform}
    \bq(t)=\int_{t-\kappa}^t(\tilde\bu_{\epsilon})_{\text{mod}}(s,t)ds +k,\qquad \zeta(t)=\int_{t-\kappa}^t(v_\epsilon)_{\text{mod}}(s,t)ds+k,
\end{align}
where $(\tilde\bu_{\epsilon})_{\text{mod}}$ and $(v_\epsilon)_{\text{mod}}$ are appropriate modifications of $\tilde\bu_{\epsilon}$ and $v_\epsilon$, respectively and $k$ is a constant. This form of test functions leads to the time-translation term appearing in \eqref{Nikolski}. This motivation is further explained below in \eqref{lowerbound}.

The construction, in spirit, is similar to that in \cite{T24} but differs due to the outlet penalty term we have in \eqref{weak_form_fixed}. See also \cite{CMT26} in which a similar temporal regularity result is proved in the no-slip case, and \cite{G08} where the idea of using time integrals of appropriate modifications of the solution to construct the test function, was first developed for FSI. We only include certain key parts of the proof here-- We will give the definition the test functions, justify the motivation explained above, and then explain in particular how the penalty term is treated. The rest of the calculations are identical to those in \cite{T24, CMT26} and are thus skipped.

 Let ${\Pi_M}$ denote the orthonormal projector in $L^2(0,L)$ onto the space $\text{span}_{1\leq i\leq M}\{\varphi_i\}$, where $\varphi_i$ satisfies $-\Delta \varphi_i=\lambda_i\varphi_i$ and $\varphi_i=0$ on $x=0,x=L$. For any ${f}\in L^2(0,L)$ we notate $({f})_M={\Pi_M}{f}$.
Since we know that $\lambda_M \sim M^2$, we will choose
\begin{align*}
	\lambda_M = c\kappa^{-\frac34}.
\end{align*}

Then, the fluid test function is defined as follows
\begin{equation}
	\begin{split}\label{test_f}
		\bq(t)&:= \int_{t-\kappa}^t \left[ P_t^{-1} P_s (\tilde\bu_{\epsilon}(s)-\bw_\epsilon(s))	+\left( \bw^M_\epsilon(s)+ \mathsf{c}_\epsilon^M(s) \bfxi_0\chi \right) \right] ds \\
		&	-\int_{t-\kappa}^tP_t^{-1}\cal{B}\Big( \text{div} \left( P_s\bw_\epsilon(s)-P_t \bw^M_\epsilon(s)	-\mathsf{c}_\epsilon^M(s) P_t\bfxi_0\chi \right)\Big)ds -(\kappa-1)\\
		&:= \int_{t-\kappa}^t(\tilde\bu_{\epsilon})_{\text{mod}}(s,t)ds -(\kappa-1),
	\end{split}
\end{equation}
where,
\begin{itemize}
	\item	$\chi$ is a smooth function such that $\chi(x,1)= 1$ and $\chi(x,0)= 0$ for $x\in [0,L]$,
	\item $\bw_\epsilon=\bv_{\epsilon}\chi$ and $\bw^M_\epsilon=(\bv_{\epsilon})_M\chi$, are the extensions into the fluid domain of the structure velocity and its finite-dimensional projection, respectively.
\item $P_t({\bf f}):=(J_{h_\epsilon}\nabla \bA^{-1}_{h_\epsilon}\cdot {\bf f})(t)$ is the Piola transformation composed with the ALE map $\bA_{h_\epsilon}$ for any ${\bf f}\in\bL^2(\sO)$.
	\item $\sB$ is the Bogovski operator on the fixed domain $\sO$. Recall that (see e.g. \cite{ Galdi}) if $\int_\sO f=0$ then
	$$\nabla\cdot\sB(f)=f, \quad \text{and}\quad \|\sB(f)\|_{\bH^{1}_0(\sO)} \leq c\|{ f}\|_{L^2(\sO)}.$$
	\item The correction term $\mathsf{c}_\epsilon^M\bfxi_0\chi$ where $\mathsf{c}_\epsilon^M(s) 
    =\int_0^L(v_\epsilon(s)-(v_\epsilon)_M(s))\in L^\infty([0,T])$ is added to ensure that the condition $\int_\sO f=0$ motivated by the previous bullet point is satisfied. Note that, unlike in the no-slip case, these integral terms are not necessarily zero due to the nonzero fluid flux at the inlet and outlet.
	\item {$\bfxi_0 = (0,1)$. This choice ensures that $ \int_0^L (\bfxi_0 \cdot (-\partial_xh(t),1))= 1$ for any $t \in [0,T_\delta]$.} 
\end{itemize}
Next, we define the corresponding structure test function as follows:
\begin{align}\label{test_s}
	\zeta(t):=\int_{t-\kappa}^t\left( (v_\epsilon)_M(s)+
	\mathsf{c}_\epsilon^M(s)\right) ds  -(\kappa-1).
\end{align}
We remark here that, in the definition of $\bq$ and $\zeta$ above, we have extended $\tilde\bu_\epsilon, v_\epsilon$ and $h_\epsilon$ by 0 outside of $[0,T_\epsilon^\delta]$.

Since the Piola transform preserves the vanishing of the normal component of a vector field at the boundary, it can be verified that the kinematic coupling condition $\bq|_{\Gamma}\cdot \bn^{h_\epsilon} = (0,\zeta)\cdot \bn^{h_\epsilon}$ is satisfied.
Observe also, due to the fact that $v_\epsilon=(v_\epsilon)_M=0$ at $x=0,x=L$ and the boundary conditions \eqref{bc_h}, that we have, $$q_2\Big|_{\Gamma_{in/out}}=0,$$ and,
\begin{align}\label{qboundary}
    q_1\Big|_{\Gamma_{in/out}}-1=\int_{t-\kappa}^t\left((\tilde{u}_\epsilon)_1\Big|_{\Gamma_{in/out}}-1\right). 
\end{align}

Furthermore, $\nabla^h\cdot\bq=0$ and,
\begin{align}
	\|\bq\|_{L^\infty(0,T_\epsilon^\delta;\bH^{1}(\sO))} &\leq \sup_{0\le t\leq T_0}\left( \|P_t\|^2_{\bH^{1.5}(\sO)}\int_{t-\kappa}^t(\|\tilde\bu_\epsilon+\bw_\epsilon\|_{\bH^1(\sO)} +\|\bw^M_\epsilon\|_{\bH^1(\sO)} + |\mathsf{c}_\epsilon^M(s)|)ds\right) \notag\\
	&\leq \kappa^{\frac12} \|h_\epsilon\|^4_{L^\infty(0,T_0;\bH^{2}(\Gamma))} ( \|\tilde\bu_\epsilon\|_{L^2(0,T_\epsilon^\delta;\bH^1(\sO))}+\|\partial_th_\epsilon\|_{L^2(0,T_\epsilon^\delta;H^1(0,L))} )\notag\\
	&\leq C\kappa^{\frac12}.\label{boundsq}
\end{align}
Appropriate regularity of $\partial_t\bq$ and that of $\zeta$ can be verified.
Hence, $(\bq,\zeta)$ defined in \eqref{test_f}, \eqref{test_s} is a valid test function \eqref{weak_form_pen}. 

We next describe the motivation behind the choice of the test function of the form \eqref{TFform}; observe that the first term on the left hand side \eqref{weak_form_pen} containing the time derivative of $\bq$ gives us:
\begin{align*}
&\int_\kappa^{T^\delta_\epsilon}\int_{\sO}J_{h_\epsilon}\tilde\bu_\epsilon\cdot\partial_t\left( \int_{t-\kappa}^t{(\tilde\bu_\epsilon)_{\text{mod}}}-(\kappa-1)\right) \\
    &=	\int_\kappa^{T^\delta_\epsilon}\int_{\sO}J_{h_\epsilon}\tilde\bu_\epsilon\cdot\left( (\tilde\bu_\epsilon)_{\text{mod}}(t,t)-(\tilde\bu_\epsilon)_{\text{mod}}(t-\kappa,t)+\int_{t-\kappa}^t\partial_t(\tilde\bu_\epsilon)_{\text{mod}}(s,t)ds\right)\\
&=\frac12\int_\kappa^{T^\delta_\epsilon}\int_{\sO}J_{h_\epsilon}\left(|\tilde\bu_\epsilon(t)-\tilde\bu_\epsilon(t-\kappa)|^2+ |\tilde\bu_\epsilon(t)|^2 - |\tilde\bu_\epsilon(t-\kappa)|^2\right)\\
	&	+\int_\kappa^{T^\delta_\epsilon}\int_{\sO}J_{h_\epsilon}\tilde \bu_\epsilon\cdot\left( (\tilde\bu_\epsilon)_{\text{mod}}(t,t)-\tilde\bu_\epsilon(t)-(\tilde\bu_\epsilon)_{\text{mod}}(t-\kappa,t)+\tilde\bu_\epsilon(t-\kappa)+\int_{t-\kappa}^t\partial_t(\tilde\bu_\epsilon)_{\text{mod}}(s,t)ds\right).
\end{align*}
The first term on the left hand side of the equation above has the following lower bound: For some constant $C_\delta>0$ depending on $\delta$ we have
\begin{align}\label{lowerbound}
   C_\delta\int_\kappa^{T^\delta_\epsilon}\int_{\sO} |\tilde\bu_\epsilon(t)-\tilde\bu_\epsilon(t-\kappa)|^2 \leq \frac12\int_\kappa^{T^\delta_\epsilon}\int_{\sO}J_{h_\epsilon}\left( |\tilde\bu_\epsilon(t)-\tilde\bu_\epsilon(t-\kappa)|^2\right).
\end{align}
The left hand side of \eqref{lowerbound} is exactly the term (semi-norm) we must bound in terms of an appropriate power of $\kappa$ in order to prove boundedness of Nikolski norm given in \eqref{Nikolski}. Therefore, to prove \eqref{reg_temp} we will find bounds, for each term appearing in the equation above and the rest of the terms appearing in the weak form \eqref{weak_form_pen}, in terms of an appropriate power of $\kappa$.

We will only show how the penalty term appearing in the weak form \eqref{weak_form_pen} is treated. Observe, due to \eqref{qboundary}, that,
\begin{equation}
\begin{split}
   & \left| \frac1{\epsilon}\int_0^{T^\delta_\epsilon}\left(\int_{\Gamma_{out}}(\tilde u_\epsilon)_1-1\right)\left(\int_{\Gamma_{out}}q_1-1\right) \right| = \left| \frac1{\epsilon}\int_0^{T^\delta_\epsilon}\left(\int_{\Gamma_{out}}(\tilde u_\epsilon)_1-1\right)\left(\int_{t-\kappa}^t\int_{\Gamma_{out}}(\tilde u_\epsilon)_1-1\right)\right|\\
   & \qquad \qquad \qquad\leq   \frac1{\epsilon}\int_0^{T^\delta_\epsilon}\left(\int_{\Gamma_{out}}(\tilde u_\epsilon)_1-1\right)\left(\int_{0}^{T^\delta_\epsilon}\left(\int_{\Gamma_{out}}(\tilde u_\epsilon)_1-1\right)^2\right)^{\frac12}\sqrt{\kappa}\\
    &  \qquad \qquad \qquad\leq   \frac1{\epsilon}\int_{0}^{T^\delta_\epsilon}\left(\int_{\Gamma_{out}}(\tilde u_\epsilon)_1-1\right)^2\sqrt{\kappa{T^\delta_\epsilon}} \leq C\sqrt{\kappa{T^\delta_\epsilon}},
\end{split}
\end{equation}
where, thanks to the energy inequality \eqref{en_ineq_pen} the constant $C>0$ is {\it independent} of $\epsilon$.
The rest of the proof follows as in \cite{CMT26}.
\end{proof}
The purpose of proving Lemma \ref{lem:compact} is to then use the following classical compactness result:
Assume that $\mathcal{Y}_0\subset\mathcal{Y}$ are Banach spaces such that $\mathcal{Y}_0$ and $\mathcal{Y}$ are reflexive with compact embedding of $\mathcal{Y}_0$ in $\mathcal{Y}$. 
		Then for any $m>0$, the embedding	$$ 
		L^2(0,T;\mathcal{Y}_0)\cap N^{m,2}(0,T;\mathcal{Y})
		\hookrightarrow L^2(0,T;\mathcal{Y})$$	is compact.

Hence, combining Lemmas \ref{lem:compact} and the estimates \eqref{en_ineq_pen} with $\mathcal{Y}_0=\bH^1(\sO)\times H^{1}(0,L)$ and $\mathcal{Y}=\bL^2(\sO)\times L^2(0,L)$, we see that the sequence $\{(\tilde\bu_\epsilon, v_\epsilon)\}$ is relatively compact in $L^2(0,T_0;\bL^2(\sO)\times L^2(0,L))$. 
	Therefore, we obtained the following strong convergence result for fluid and structure velocities:
	\begin{proposition}\label{prop:strongbv}
		For a fixed $\delta>0$, the sequence 
		\begin{align*}
			(\tilde\bu_\epsilon, v_\epsilon) &\to (\tilde \bu, v), \quad\text{ strongly in } L^2(0,T_*^\delta;\bL^2(\sO)\times L^2(0,L)),\\
            h_{\epsilon} &\to h, \quad\text{ strongly in } L^\infty(0,T_*^\delta; C^1([0,L])),
		\end{align*}
       as $\epsilon\to 0$. Here, $T^\delta_*>0$ is the time until which the limiting structure function $h$ satisfies the minimum distance condition \eqref{hmin1}.
	\end{proposition}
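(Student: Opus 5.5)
The plan is to combine the uniform bounds of \eqref{en_ineq_pen}, which do not depend on $\epsilon$, with the fractional time regularity of Lemma \ref{lem:compact}, and then apply the Aubin--Lions--Simon type compactness result stated just above.

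\textbf{Step 1: fixing the time interval.} First I would make the interval of analysis independent of $\epsilon$. The energy bound \eqref{en_ineq_pen} controls $h_\epsilon$ in $L^\infty(0,T;H^2(0,L))$ and $\partial_t h_\epsilon$ in $L^2(0,T;H^1_0(0,L))$, uniformly in $\epsilon$. Hence $h_\epsilon$ is bounded in $C^{0,1/2}([0,T];H^1(0,L))$, and therefore equicontinuous in time with values in $C([0,L])$. Starting from $h_0\ge 2\delta$ (choose $\delta$ accordingly), there is a time $T_1>0$ independent of $\epsilon$ such that $h_\epsilon\ge\delta$ on $[0,T_1]$. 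Thus $T^\delta_\epsilon\ge T_1$ for all $\epsilon$, and I work on $[0,T_1]$; at the end this is identified with $T^\delta_*$ for the limit $h$.

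\textbf{Step 2: convergence of the displacements.} Next I would treat $h_\epsilon$ itself. The bounds in $L^\infty(0,T_1;H^2)\cap W^{1,\infty}(0,T_1;L^2)$ give compactness in $C([0,T_1];H^{2-s}(0,L))$ for any $s>0$, by Aubin--Lions--Simon with the compact embedding $H^2\hookrightarrow H^{2-s}$. In one dimension $H^{2-s}\hookrightarrow C^1([0,L])$ for $s<1/2$, which yields $h_\epsilon\to h$ in $L^\infty(0,T_1;C^1([0,L]))$ along a subsequence. Consequently $h\ge\delta$ on $[0,T_1]$. The ALE maps \eqref{ALE} also converge, in $L^\infty(0,T_1;\bH^{2.5-s}(\sO))$, because of the elliptic estimate $\|\bA_h\|_{\bH^{2.5}}\le C\|h\|_{H^2}$ together with linearity in $h$.

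\textbf{Step 3: convergence of the velocities.} For the velocities I would take $\mathcal{Y}_0=\bH^1(\sO)\times H^1(0,L)$ and $\mathcal{Y}=\bL^2(\sO)\times L^2(0,L)$. Boundedness of $\tilde\bu_\epsilon$ in $L^2(0,T_1;\bH^1(\sO))$ follows from $\nabla\tilde\bu_\epsilon=(\nabla\bu_\epsilon\circ\bA_{h_\epsilon})\nabla\bA_{h_\epsilon}$. One also needs the uniform $W^{1,\infty}$ bounds on $\bA_{h_\epsilon}$ and the lower bounds on $J_{h_\epsilon}$, both of which hold because $h_\epsilon\ge\delta$ and $\|h_\epsilon\|_{H^2}$ is bounded. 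Then $v_\epsilon$ is bounded in $L^2(0,T_1;H^1_0)$ by \eqref{en_ineq_pen}. Lemma \ref{lem:compact} supplies the uniform $N^{1/8,2}(0,T_1;\mathcal{Y})$ bound. The compactness theorem then gives a strongly convergent subsequence in $L^2(0,T_1;\mathcal{Y})$. I would identify its limit with the weak limits $(\tilde\bu,v)$, where $v=\partial_t h$ by distributional convergence of $\partial_t h_\epsilon$.

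\textbf{Main obstacle.} The main difficulty I expect is in Step 3: ensuring that the constant $C_\delta$ of Lemma \ref{lem:compact} and the $\bH^1$ bound on $\tilde\bu_\epsilon$ are genuinely uniform on the common interval $[0,T_1]$. This requires Step 1 carefully, namely uniform equicontinuity of $h_\epsilon$ in $C([0,L])$ via the interpolation $\|h(t)-h(s)\|_{L^\infty}\le C\|h(t)-h(s)\|_{L^2}^{1/4}\|h(t)-h(s)\|_{H^2}^{3/4}$. A secondary point is that the fluid domains vary with $\epsilon$, so the convergence must be stated for the pulled-back fields $\tilde\bu_\epsilon$ on $\sO$, which is exactly the formulation of the proposition.
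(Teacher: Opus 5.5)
Your proposal follows the same route as the paper: the $\epsilon$-independent energy bounds \eqref{en_ineq_pen} and the Nikolski estimate of Lemma~\ref{lem:compact} are fed into the compact embedding $L^2(0,T;\mathcal{Y}_0)\cap N^{\frac18,2}(0,T;\mathcal{Y})\hookrightarrow L^2(0,T;\mathcal{Y})$ with $\mathcal{Y}_0=\bH^1(\sO)\times H^1(0,L)$ and $\mathcal{Y}=\bL^2(\sO)\times L^2(0,L)$, and you also fill in what the paper leaves implicit (a common interval $[0,T_1]\subset[0,T^\delta_\epsilon]$ obtained from equicontinuity, and the $L^\infty_tC^1_x$ convergence of $h_\epsilon$ via Aubin--Lions--Simon). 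The only loose end is that this yields convergence on $[0,T_1]$ with $T_1\le T^\delta_*$, not an identification $T_1=T^\delta_*$; reaching $T^\delta_*$ needs a short continuation (pass to a subsequence with $T^\delta_\epsilon\to T^\sharp$ and argue on $[0,T']$ for each $T'<T^\sharp$), and the paper is no more precise on this point.
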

    The limiting function $(\tilde\bu, h)$ is the candidate solution for the equivalent Lagrangian formulation of our problem \eqref{weak_form_gen}. Due to Proposition \ref{prop:strongbv} and the weak convergence results we obtain owing to the energy bounds \eqref{en_ineq_pen}, we can pass $\epsilon\to0$ in \eqref{weak_form_pen}. The desired local-in-time weak solution stated in Theorem \ref{thm:exist} is then given by $(\tilde\bu\circ\bA^{-1}_h,h)$. We remind the reader here that this solution is constructed on $[0,T^\delta_*]$ which is the time interval until which the structure function $h$ satisfies the minimum distance condition \eqref{hmin1}.  
    
    Now, we proceed to Step (2) described at the beginning of this section. Since the existence result above holds true for any minimum distance $\delta>0$, the time of existence $T^\delta_*$ can then be extended to some time $T_0>0$ that satisfies either one of the following conditions: 
    $$T_0 =\infty,\quad\text{or}\quad\lim_{t\to T_0}h =0.$$
    In other words, our existence result is
either global in time, or, in case the walls of the tube collapse, it holds until the time of collapse. This can be proved by using an argument identical to that presented in Theorem 3 of \cite{MC13} or that on pp. 397-398 in \cite{CDEG}. This completes the proof of Theorem \ref{thm:exist}.

\section{Regularity result: Proof of Theorem \ref{thm:regularity}}\label{sec:regularity}

The aim of this section is to prove Theorem \ref{thm:regularity}.
The main idea behind the proof is to construct a test function $\overline{\eta}$ for the structure subproblem that is equivalent to $\partial_{xx}h$. The challenge, of course then lies in constructing an appropriate corresponding test function $\bq$ for the fluid equations satisfying appropriate coupling conditions at the fluid-structure interface $\Gamma_h$. Since our sole aim in this section is to obtain a higher regularity result for $h$, our construction of test functions will not involve a tangential jump at $\Gamma_h$ i.e. we will construct a test pair that satisfies the no-slip boundary condition at the fluid-structure interface.

The rest of the section is devoted to the derivation of this {\it a priori} estimate. To justify testing \eqref{weak_form_gen} with such the test-function $\overline{\eta}= \partial_{xx}h$ rigorously, we would consider a regularization of the structure equation \eqref{beam_eqn} by augmenting the left-side with the higher order term $\epsilon \partial^6_{x}h(x,t)$ for $\epsilon>0$ and by imposing 0 boundary conditions for $\partial_{xx}h$ at $x=0,L$. The estimates we find below are all independent of $\epsilon$ as they do not involve the $H^4$-norm of $h$. Hence, our desired result \eqref{reg_spatial} can then be proved by taking the limit $\epsilon\to0$ in the regularized problem using the usual techniques. 

Firstly, we recall that we discussed the existence of a weak energy solution in Theorem \ref{thm:exist}. For this solution,
as in Section \ref{sec:exist}, we let $T^\delta_*>0$ be such that 
\begin{align}\label{hmin}
  0<  \delta\leq h(x,t),\qquad \forall x\in[0,L],\,\, t\in [0,T^\delta_*].
\end{align}
We emphasize here that we do {\bf not} seek to find bounds for $h$ independently of $\delta$ and that our aim is only to prove the additional regularity of $h$ given in \eqref{reg_spatial} stating that the structure function $h$ belongs to the space $L^2_tH^3_x$ {\it until} the time of contact. More precisely, we will prove:
\begin{align}\label{h3delta}
 \|h\|^2_{L^2(0;T^\delta_*;H^3(0,L))} + \|\partial_{xx}h(T^\delta_*)\|_{L^2(0,L)}^2\leq C_\delta +\|\partial_{xx}h_0\|_{L^2(0,L)}^2,
\end{align}
for some constant $C_\delta>0$ that depends on $\delta>0$.
\subsection{Construction of test functions} \label{test2}
To find the desired bounds for the $H^3$-norm of the structural height function $h$, we will test the structure subproblem with $\overline{\eta}= \partial_{xx}h(x,t)$. In this subsection, our aim is to then construct an appropriate test function $\overline\bfphi$ for the fluid subproblem so that  $(\overline\bfphi,\overline{\eta})$ satisfies appropriate conditions. 

We let,
\begin{align*}
 \Psi(\xi)=-2\xi^3 +3\xi^2.
\end{align*}
Then, we define $(\overline{\bfphi},\overline{\eta})$ as
\begin{align}\label{testpair1} 
\overline{\eta}(x,t)=h_{xx}(x,t),\quad \quad \overline{\bfphi} = \nabla^\perp \overline{\psi},
\end{align}
where the stream function $\overline{\psi}$ describing the incompressible flow $\overline{\bfphi}$ is given by,
\begin{equation}\label{barpsi}
    {\overline{\psi}}(x,y,t)=\partial_{x}h(x,t) \Psi\left(\frac{y}{h(x,t)}\right).
\end{equation}

The motivation behind this test pair comes from the no-slip counterpart of the present work \cite{GH_noslip}. In this paper, the authors construct a test pair such that the test function for the fluid subproblem is an approximate solution of the steady Stokes equations, defined on every `frozen' instance of the fluid domain. We stress here that our aim is only to analyze the regularity of our weak solution, given by Theorem \ref{thm:exist}. Hence, we consider the solution
on $[0,T^\delta_*]$ where the time $T^\delta_*>0$ is given in \eqref{hmin}, for each $\delta>0$ and do not seek bounds uniformly in $\delta$ (see our claim \eqref{h3delta}). Hence,
our estimates are fundamentally different in nature and do not contradict the results of \cite{GH_noslip}.
See also \cite{CMT26} where a similar `hidden' regularity result is proved for a fluid-structure interaction model in 3D, by constructing the fluid test function by solving the time-dependent Stokes equation on the reference domain and using the Piola transform composed with the Arbitrary Lagrangian-Eulerian map.

 We will now proceed to find appropriate bounds for the test pair \eqref{testpair1}. For that purpose, we first find pointwise bounds for the function $\Psi$. Hereon, we will repeatedly use the energy estimate \eqref{en_ineq} and its consequence, given in \eqref{hbounded}, that gives us boundedness in time and space of the function $h$ and that of its spatial derivative $\partial_xh$.

Notice, for any $t\leq T^\delta_*$, and for every $x\in [0,L],\frac{y}{h(x,t)}\leq 1$, that 
\begin{align}\label{psibounded}
 |\Psi\left(\frac{y}{h(x,t)}\right)| \leq C,
\end{align}
for some constant $C$ depending only on $\delta,\beta_b,\beta_s$.
Similarly, we observe the same behavior for the derivatives ${\Psi_{\xi},}\Psi_{\xi\xi},\Psi_{\xi\xi\xi}$ and obtain,
\begin{equation}\label{psi2}
\begin{split}
     \left|\Psi_{\xi}\left(\frac{y}{h(x,t)}\right)\right|+     \left|\Psi_{\xi\xi}\left(\frac{y}{h(x,t)}\right)\right|
     +\left|\Psi_{\xi\xi\xi}\left(\frac{y}{h(x,t)}\right)\right|
    \leq     C.
\end{split}
\end{equation}
Another important observation is that,
$$\Psi_{\xi}\left(1\right) =  \Psi_{\xi}(0)= 0.$$

This completes the derivation of required bounds for the derivatives of $\Psi$. Using these estimates, we will establish appropriate bounds for the fluid test function $\overline{\bfphi}$ in the next section.
\subsection{Bounds for $\overline{\bfphi}$}
In this section, we will derive the necessary bounds for the test function $\overline{\bfphi}$. All the bounds in this section are given in terms of constants that {\bf depend on $\delta>0$}, where $\delta$ is defined according to the condition \eqref{hmin}.
We recall that, by definition, we have
\begin{align*}
    &\overline{\bfphi}(x,y,t) = \left[-\partial_y\left(\partial_xh(x,t)\Psi\left(\frac{y}{h(x,t)}\right)\right), \partial_x\left(\partial_xh(x,t)\Psi\left(\frac{y}{h(x,t)}\right)\right)\right]\\
    &= \Big[-\frac{\partial_xh(x,t)}{h(x,t)} \Psi_\xi\left(\frac{y}{h(x,t)}\right),\,
    \partial_{xx}h(x,t)\Psi\left(\frac{y}{h(x,t)}\right)-\partial_xh(x,t)\left( \partial_{x}h(x,t)\Psi_{\xi}\left(\frac{y}{h(x,t)}\right)\frac{y}{h(x,t)^2}
    \right)\Big] \\
    &=: [\overline{\phi}_1(x,y), \overline{\phi}_2(x,y)].
\end{align*}
Since $\partial_{x}h=\partial_{xx}h=0$ at $x=0,L$, it can be checked easily that $\overline{\bfphi}$ satisfies the no-slip boundary conditions on $\partial F_h\setminus \Gamma_h$:
$$\overline{\bfphi}(0,y,t)=0,\quad
\overline{\bfphi}(L,y,t) = 0,\quad \overline{\bfphi}(x,0,t)=0,$$
and the following kinematic coupling condition at the moving boundary $\Gamma_h$:
$$\overline{\bfphi}|_{\Gamma_h} = (0,\overline{\eta}).$$
Next, due to \eqref{psibounded}-\eqref{psi2} and \eqref{hbounded}, we observe that for any $t\leq T^\delta_*$ and $(x,y)\in \overline F_h(t)$ we have
$$|\overline{\bfphi}(x,y,t)| \leq C_\delta\left(1+|\partial_{xx}h(x,t)|\right). $$
This estimate, along with the energy inequality \eqref{en_ineq} further implies that,
\begin{align}\label{overlinephiboundary}
    \|\overline{\bfphi}\|_{L^\infty(0,T^\delta_*; \bL^2(\Gamma_h))}\leq C_\delta.
\end{align}
Moreover, due to the one-dimensional embedding $H^1(0,L)\hookrightarrow L^\infty(0,L)$ we also have,
\begin{align}\label{overlinephilinfty}
\|\overline{\bfphi}\|_{L^2(0,T^\delta_*;\bL^\infty({F_h}))} \leq \|\partial_{xx}h\|_{L^2(0,T^\delta_*;L^\infty(0,L))} \leq C_\delta \|h\|_{L^2(0,T^\delta_*;H^3(0,L))}.
\end{align}
Next, we proceed to finding bounds for the derivative $\nabla \overline{\bfphi}$. 

We compute these spatial derivatives below:
\begin{align*}
     \partial_x \overline{\phi}_1 (x,y)&=-\partial_y\overline{\phi}_2 (x,y)  = \partial_x\left[{ -}\frac{\partial_xh(x)}{h(x)}\Psi_{\xi}\left(\frac{y}{h(x)}\right) \right]\\
       & = \frac{\partial_xh(x)}{h(x)}\left( \partial_{x}h(x) \Psi_{\xi\xi}\left(\frac{y}{h(x)}\right)\frac{y}{h(x)^2}\right)
        +\left(\frac{\partial_xh(x)^2}{h^2(x)}-\frac{\partial_{xx}h(x)}{h(x)}\right)\Psi_{\xi}\left(\frac{y}{h(x)}\right), 
\end{align*}
and
$$\partial_y\overline{\phi}_1 (x,y)
        = \partial_y\left[{ -}\frac{\partial_xh(x)}{h(x)}\Psi_{\xi}\left(\frac{y}{h(x)}\right)\right] 
        = { -}\frac{\partial_xh(x)}{h(x)^2}\Psi_{\xi\xi}\left(\frac{y}{h(x)}\right).$$

Similarly, for the second component $\overline{\phi}_2$ we find, 
\begin{align*}
     &  \partial_x \overline{\phi}_2 (x,y)
        = \partial_x\left[\partial_{xx}h(x)\Psi\left(\frac{y}{h(x)}\right) 
        - (\partial_{x}h(x))^2\Psi_{\xi}\left(\frac{y}{h(x)}\right)\frac{y}{h(x)^2} \right]\\ 
        &= \partial_{xxx}h(x)\Psi\left(\frac{y}{h(x)}\right) 
      -y\left(\frac{3\partial_xh(x)\partial_{xx}h(x)}{h(x)^2}-\frac{2(\partial_xh(x))^3}{h(x)^3}\right)\Psi_{\xi}\left(\frac{y}{h(x)}\right)
        +\frac{\partial_xh(x)^3y^2}{h(x)^4}\Psi_{\xi\xi}\left(\frac{y}{h(x)}\right)
        .
\end{align*}

Using the bounds \eqref{psibounded}-\eqref{psi2} along with \eqref{hbounded} and \eqref{hmin}, we obtain for any $t\leq T^\delta_*$ and $(x,y)\in\overline{F}_h$ that
$$|\nabla\overline{\bfphi}(x,y,t)| \leq C_\delta(1+|\partial_{xx}h(x)|+|\partial_{xxx}h(x)|),$$
which tells us that
\begin{align}\label{overlinephiH1}
    \|\nabla\overline{\bfphi}\|_{L^2(0,T^\delta_*;\bL^2(F_h))} \leq C_\delta\|h\|_{L^2(0,T^\delta_*;H^3(0,L))}.
\end{align}

Next, we will apply the test pair $(\overline{\bfphi},\overline{\eta})$, defined in \eqref{testpair1}, in the weak form \eqref{weak_form_gen}. This yields,
\begin{equation} \label{weak_form_exist}
\begin{split}
& \int\limits_{F_{h_0}} \bu_0\cdot\overline{\bfphi}(0)=\int\limits_{F_h(T^\delta_*)} \bu(T^\delta_*)\cdot\overline{\bfphi}(T^\delta_*)
    -\int\limits_0^{T^\delta_*}\int\limits_{{F_h(t)}}\bu\cdot\partial_t \overline{\bfphi}   + \mu\int\limits_0^{T^\delta_*}\int\limits_{{F_h(t)}}\nabla\bu:\nabla\overline{\bfphi}\\
  & \int\limits_0^{T^\delta_*}\int\limits_{{F_h(t)}}  (\bu\cdot\nabla)\bu\cdot\overline{\bfphi}-\int\limits_0^{T^\delta_*}\int\limits_{\Gamma_h(t)}(\bu\cdot\overline{\bfphi})(\bu\cdot \bn^{h})+\int\limits_0^{T^\delta_*}\int\limits_{0}^{L}\left(\alpha \partial_{xx}h\partial_{xx}\overline{\eta} + \gamma\partial_{xt}h\partial_{x}\overline{\eta}-\partial_th\partial_{t}\overline{\eta} \right)\\
&:= J_1+...+J_6.
    \end{split}
\end{equation}
We will analyze each term $J_i; i=1,..,6$ starting with $J_2$ which is one of the most critical terms:
$$   \int\limits_0^{T^\delta_*}  \int\limits_{{F_h(t)}} \partial_t\overline{\bfphi} \cdot \bu 
=  \int\limits_0^{T^\delta_*}  \int\limits_{{F_h(t)}} \partial_t\overline{\phi}_1 u_1 
+  \int\limits_0^{T^\delta_*}  \int\limits_{{F_h(t)}} \partial_t\overline{\phi}_2 u_2 .$$
Observe that, due to the fact that $\|\partial_th\|_{L^2(0,T;H^1_0(0,L))}\leq C$, the 1D embedding $H^1(0,L)\hookrightarrow L^\infty(0,L)$ and that we have the lower bounds \eqref{hmin} for $h$, we have
$$\|\overline{\phi}_1\|_{L^2(0,T^\delta_*;L^2(F_h))} \leq C_\delta\|\partial_th\|_{L^2(0,T^\delta_*;H^1(0,L))}\leq C_\delta. $$
The same is not true of $\partial_t\overline{\phi}_2$.
Hence, to handle the second term we define the following anti-derivative,
\begin{equation*}
    P_x \partial_t\overline{\phi}_2(x,y,t) := \int\limits_0^x \partial_t\overline{\phi}_2(s,y,t)ds =\int\limits_0^x \partial_x\partial_t\overline{\psi}(s,y,t)ds = \partial_t\overline{\psi}(x,y,t)-\partial_t\overline{\psi}(0,y,t)=\partial_t\overline{\psi}(x,y,t),
\end{equation*}
where we used the fact that $\partial_xh(0,t)=\partial_th(0,t)=0$.
By definition we thus have,
\begin{equation*}
    \partial_x(P_x \partial_t\overline{\phi}_2) = \partial_t\overline{\phi}_2,
\end{equation*}
and so we substitute this into the integral and integrate by parts. Since $P_x\partial_t\bar\phi_2=0$ at the inlet boundary $x=0$ and the outlet boundary $x=L$, we obtain
\begin{align*}
  \int\limits_0^{T^\delta_*}   \int\limits_{{F_h(t)}} \partial_t\overline{\phi}_2 u_2=   \int\limits_0^{T^\delta_*}  \int\limits_{{F_h(t)}}\partial_x( P_x \partial_t\overline{\phi}_2)u_2
   = -  \int\limits_0^{T^\delta_*}  \int\limits_{{F_h(t)}} P_x\partial_t\overline{\phi}_2 \partial_x u_2
    +   \int\limits_0^{T^\delta_*}  \int\limits_{\Gamma_h(t)} \left(P_x \partial_t\overline{\phi}_2\right) u_2 n^x ,
\end{align*}
where $n^x=\frac{-\partial_xh}{\sqrt{1+\partial_xh^2}}$ is the horizontal component of the normal $\bn^h$ at the moving boundary $\Gamma_h$.

To treat the first term on the right-hand side we first observe, for $\overline{\psi}$ defined in \eqref{barpsi}, that,
$$|\partial_t\overline{\psi}| \leq C_\delta (|\partial_{xt}h|+|\partial_th|),$$
where $C_\delta>0$ depends on $\delta$ defined by the condition \eqref{hmin}. Hence, the first term on the right-hand side
is bounded, due to \eqref{en_ineq}, as follows,
\begin{align*}
   \int\limits_0^{T^\delta_*}  \int\limits_{{F_h(t)}} P_x \partial_t\overline{\phi}_2 \partial_x u_2 &\leq \|P_x \partial_t\overline{\phi}_2\|_{L^2(0,T^\delta_*;L^2(F_h))}\| \partial_x u _2\|_{L^2(0,T^\delta_*;L^2(F_h))}\\
   &=
   \|\partial_t\overline{\psi}\|_{L^2(0,T^\delta_*;L^2(F_h))}\| \partial_x u _2\|_{L^2(0,T^\delta_*;L^2(F_h))}
   \leq C.
\end{align*}
Now for the second term, by using the fact that $\Psi_{\xi}(1)=0$, we obtain that
\begin{align*}
    \partial_t\overline{\psi} (x,h(x,t))= \partial_{xt}h(x,t).
\end{align*}
Thus, since $n^x=\frac{-\partial_xh}{\sqrt{1+\partial_xh^2}}$ is bounded \eqref{normtangbound}, we have, thanks to \eqref{en_ineq}, that
\begin{align*}
   \int\limits_0^{T^\delta_*}  \int\limits_{\Gamma_{h}} \left(P_x \partial_t\overline{\phi}_2\right) u_2 n^x &\leq \int\limits_0^{T^\delta_*}\|u_2\|_{L^2(\Gamma_h)}\|\partial_t\overline{\psi}\|_{L^2(\Gamma_h)}\\
   &\leq C\|\bu\|_{L^2(0,T^\delta_*;\bL^2(\Gamma_h))}\|\partial_{tx}h\|_{L^2(0,T^\delta_*;L^2(0,L))} \leq C.
\end{align*}
Now we will continue with the rest of the fluid terms $J_3$ and $J_4$. For the dissipation term, using \eqref{overlinephiH1}, we obtain
\begin{align*}
&|J_3|=\left|\int_0^{T^\delta_*}\int_{{F_h(t)}}\nabla\bu:\nabla\overline{\bfphi}\right|    \leq C\|\nabla\bu\|_{L^2(0,T^\delta_*;\bL^2({F_h}))}\|\nabla\overline{\bfphi}\|_{L^2(0,T^\delta_*;\bL^2({F_h}))}\leq C+\frac{\alpha}{4}\|h\|^2_{L^2(0,T^\delta_*;H^3(0,L))}. 
\end{align*}
For the advection term $J_4$ we use \eqref{overlinephilinfty} to obtain,
\begin{align*}
\left| \int\limits_0^{T^\delta_*}\int\limits_{{F_h(t)}}  (\bu\cdot\nabla)\bu\cdot\overline{\bfphi}\right|
   & \leq C\|\bu\|_{L^\infty(0,T^\delta_*;\bL^2({F_h}))}\|\bu\|_{L^2(0,T^\delta_*;\bH^1({F_h}))}\|\overline{\bfphi}\|_{L^2(0,T^\delta_*;\bL^\infty({F_h}))} \\
   &\leq C+\frac{\alpha}{4}\|h\|^2_{L^2(0,T^\delta_*;H^3(0,L))}.
\end{align*}

Next, we will consider the boundary integral $J_5$.
 Note that the impermeability condition states that $\displaystyle \bu|_{\Gamma_h}\cdot \bn^h=(0,\partial_th)\cdot \bn^h$. Hence, by applying \eqref{overlinephiboundary}, we obtain
\begin{align*}
    |J_5| \leq \|\bu\|_{L^2(0,T^\delta_*;\bL^2(\Gamma_h))}\|\overline{\bfphi}\|_{L^\infty(0,T^\delta_*;\bL^2(\Gamma_h))}\|\partial_th\|_{L^2(0,T^\delta_*;H^1(0,L))} \leq C.
\end{align*}
Hence, after collecting all the bounds for the terms $J_i;i=1,...,5$ and applying integrating by parts to the structure terms $J_6$, we deduce, for some constant $C_\delta>0$ depending on $\delta>0$, that
\begin{align}\label{h3inequality}
 \|h\|^2_{L^2(0;T^\delta_*;H^3(0,L))} + \|\partial_{xx}h(T^\delta_*)\|_{L^2(0,L)}^2\leq C_\delta +\|\partial_{xx}h_0\|_{L^2(0,L)}^2.
\end{align}
Since the bound in \eqref{h3inequality}, holds for every $\delta>0$,
 we obtain the desired result \eqref{reg_spatial} until the time of existence $T_0>0$ given by Theorem \ref{thm:exist}. In other words, the result \eqref{reg_spatial} holds until the time of contact $T_0>0$ of the compliant top boundary with the bottom boundary of the tube. This completes the proof of Theorem \ref{thm:regularity}.

\section{Finite-time contact: Proof of Theorem \ref{mainresult}}\label{sec:contact}
In this section we will prove our main result stated in Theorem \ref{mainresult}. To do so, we will assume, for a {\bf contradiction} that the structure satisfies no-collapse condition. That is, for any $T>0$, 
\begin{align}\label{hypothesis}
\text{assume that $h(x,t) >0$ for all $t\in [0,T]$ and $x\in [0,L]$.}
\end{align}

To find an appropriate contradiction, our proof relies on the construction of a test function pair $(\bfphi,\eta)$ for the weak formulation \eqref{weak_form_gen}. The proof is divided into several steps: \\
(1) We first construct a test function pair $(\bfphi,\eta)\in\cal{T}(0,T)$, such that $\bfphi$ additionally satisfies the outflow condition \eqref{phioutlet} and approximates the solution to the steady Stokes equation,\\
(2) Then we find appropriate bounds for this test function and show that it is a valid test function for the weak form \eqref{weak_form_gen},\\
(3) Then, we will estimate each term appearing in \eqref{weak_form_gen}, corresponding to the constructed test pair, in terms of $T>0$.

We begin with Step (1) in the following subsection.

\subsection{Construction of test functions with slip}\label{test1} 
The aim of this subsection is to construct an appropriate pair of test functions $(\bfphi,\eta)$ for the fluid-structure system that will give us the desired contradiction. 
We will list here some important characteristics of the construction. Our main idea is based on the works of Gerard-Varet and Hillairet and Wang \cite{GH_slip}: that is, we aim to use the solution to the time-independent Stokes equation {with slip boundary conditions} solved on the fluid domain frozen at any time $t\geq 0$ as our fluid test function $\bfphi$. This naturally leads us to seek the minimizer of the associated energy functional, %
\begin{equation}
    \cal{E}_h(\bfphi, \bfeta) :=\int\limits_{{F_h}}|\nabla \bfphi|^2 
    +\int\limits_{\Gamma_h} \left[  \left(\frac{1}{\beta_s} + \kappa(h)\right)|(\bfphi-\bfeta)\cdot \bftau^h|^2 \right] + \frac{1}{\beta_b}\int\limits_{\Gamma_b}|\bfphi \cdot \bftau^b|^2.
    \label{en_func}
\end{equation}
As is also mentioned in \cite{GH_slip}, it is a non-trivial task to find the exact solution to the Stokes equations in terms of the structure displacement $h$. Hence, we must find a suitable approximation for the solution to the Stokes equation and thus to that of the energy functional \eqref{en_func} which is explicitly solvable in terms of $h$.
To find such an energy functional and its corresponding minimizer, we first breakdown the behavior of the desired solution.

Since, we are operating in two spatial dimensions, it is convenient to write the desired solenoidal test function $\bfphi$ in terms of a stream function $\psi$,
\begin{equation}\label{streamfunction}
    \bfphi=\nabla^{\perp}\psi=(-\partial_y \psi, \partial_x\psi).
\end{equation}
We will now take a closer look at the desired behavior of $(\bfphi,\eta)$ at the moving interface $\Gamma_h$. 
Recall that a valid fluid test function $\bfphi$ is required to satisfy impermeability condition \eqref{bottom_bc} on the bottom boundary $\Gamma_b$. This condition translates into $\partial_x \psi(x,0) =0$. Hence, we set, 
\begin{equation}
    \psi(x,0) = 0.
    \label{stream_bot}
\end{equation}
Similarly, the impermeability condition on the top boundary $\Gamma_h$, which gives us the continuity of normal velocities at the interface $\Gamma_h$, reads $\displaystyle\bfphi(x,h(x,t),t)\cdot \bn^h(x,t) = \eta(x,t)\cdot \bn^h(x,t)$. Hence, for $\bfeta=(0,\eta)$ we have,
$$\nabla^{\perp}\psi { (x,h(x,t),t)}\cdot \bn^h = \nabla\psi{ (x,h(x,t),t)}\cdot\bftau^h   = \bfeta(x,t)\cdot \bn^h=\frac{\eta}{\sqrt{1+(\partial_x h(x,t))^2}},$$
where $\bn^h$ and $\bftau^h$ are defined in \eqref{normaltangent}.

For simplicity, we denote  $\xi(x)=(x,h(x,t),t)$, and using this notation observe that
\begin{align}\label{dpsi}
    \frac{d}{dx}(\psi(\xi(x)) = \nabla\psi(\xi(x))\cdot\xi'(x) = \nabla\psi(\xi(x)) \cdot (1, \partial_xh(x)) = \sqrt{1+(\partial_x h(x))^2}(\nabla\psi(\xi(x))\cdot\bftau^h).
\end{align}
Hence, by combining the two identities above we obtain
$$\frac{d\psi(\xi(x))}{dx}=\eta(x).$$
We impose the corner condition $\psi(0,H,t)=H$ and integrate the equation above in $x$. This tells us that the stream function given in \eqref{streamfunction} must satisfy, 
\begin{equation}
    \psi(x,h(x,t),t)=\int\limits_0^x \eta(\zeta,t) d\zeta + H.
    \label{stream_top}
\end{equation}

Hence, we conclude that for the fluid-structure test functions pair $(\bfphi,\eta)$ to satisfy the impermeability conditions on the top and the bottom boundaries,
the stream function given in \eqref{streamfunction} must satisfy \eqref{stream_bot} and \eqref{stream_top} on the bottom and the top boundaries, respectively.

Next, using these two equations, we characterize the behavior of the tangential jumps, appearing in the energy functional \eqref{en_func}, at top and the bottom boundaries. 

We begin with the time-dependent top boundary $\Gamma_h$. Observe that,
\begin{equation*}
    \Big(\bfphi{(x,h(x,t),t)}-\bfeta(x,t)\Big)\cdot\bftau^h = \frac{-\partial_y\psi(x,h(x,t),t)+\partial_x h(x,t)(\partial_x\psi(x,h(x,t),t)-\eta(x,t))}{\sqrt{1+(\partial_x h(x,t))^2}},
\end{equation*}
where $\bn^h$ and $\bftau^h$ are defined in \eqref{normaltangent}.

 Now we recall \eqref{dpsi} and restate it below,
\begin{equation*}
    \frac{d\psi{(x, h(x,t))}}{dx}= \partial_x\psi(x, h(x,t))+\partial_xh(x,t)\partial_y\psi(x, h(x,t)) = \eta(x,t).
\end{equation*}
Notice that, this gives us on $\Gamma_h$, that
\begin{equation}\label{jumptop}
 (\bfphi(x, h(x,t))-\bfeta(x,t))\cdot\bftau^h 
 = -\partial_y\psi (x, h(x,t))\sqrt{(1+(\partial_x h(x,t))^2)}.
\end{equation}
Similarly, on the bottom boundary $\Gamma_b$ we obtain, 
\begin{equation}\label{jumpbottom}
    \phi_2(x,0,t) = -\partial_y\psi(x,0,t).
\end{equation}
We then finally substitute \eqref{jumptop}-\eqref{jumpbottom} in the energy functional \eqref{en_func} for the fluid-structure test pair $(\bfphi,\eta)$ in terms of the stream function $\psi$ defined in \eqref{streamfunction} that satisfies \eqref{stream_bot}-\eqref{stream_top}, which gives us that
\begin{equation}
    {\cal{E}}_h(\bfphi,\eta)
    :=\int\limits_{{F_h(t)}}|\nabla \bfphi|^2 
    + \int\limits_{\Gamma_h}\left[ \left(\frac{1}{\beta_s} + \kappa(h)\right) \left| \partial_y\psi\right|^2 \left|(1+(\partial_x h)^2)^{3/2}\right|\right]
    + \frac{1}{\beta_b}\int\limits_{\Gamma_b}\left|\partial_y\psi\right|^2,
    \label{en_func_inter}
\end{equation}
where $\kappa(h)$ is the curvature of $\Gamma_h$.

For any fixed $\eta$, to find a suitable approximation of the energy functional \eqref{en_func_inter}, we exploit ideas that are generally applied in thin-gap geometries. In the narrow region between the boundaries, we suppose that the vertical scale is much smaller than the horizontal scale. Consequently, variations of the flow in the horizontal direction are expected to be much weaker than the variations in the vertical direction. This motivates a `slow-$x$ approximation', in which the derivatives in the $x$-direction are neglected to leading order. Under this approximation, at each fixed horizontal position $x$, the flow profiled is approximated by a function of the vertical coordinate, $y$, only. We denote this approximation for the stream function by $\psi^x = \psi^x(y)$ and obtain the following reduced energy functional 
\begin{equation}
    \widetilde{\cal{E}}_h(\psi^x)
    = \int\limits_0^{h(x)}|(\psi^x)''(y)|^2dy 
    + \alpha_s(h(x))|(\psi^x)'(h(x))|^2
    + \alpha_b |(\psi^x)'(0)|^2,
\label{approx_en_func}
\end{equation}
where 
\begin{equation}    \label{alpha}
    \alpha_s(h(x))=\left(\frac{1}{\beta_s}+\kappa(h(x))\right)(1+\partial_xh^2(x))^{3/2}
\quad\text{ and }\quad
    \alpha_b = \frac{1}{\beta_b}.
\end{equation}
This reduced functional \eqref{approx_en_func} can be interpreted as the vertical dissipation energy associated with the approximate Stokes profile at a fixed horizontal location $x$. A similar functional is obtained in \cite{GH_slip}. The reason we use it is that it allows the minimization problem to become explicitly solvable, allowing us to construct a convenient approximate solution to the Stokes equations. We minimize \eqref{approx_en_func} with the Dirichlet boundary conditions \eqref{stream_bot} and \eqref{stream_top}. This leads us to the following form: 
\begin{equation}\label{def:streamfunction}
    {\psi}(x,y,t)=\left(\int\limits_0^x\eta(\zeta,t) d\zeta+H\right) \Phi\left(x,\frac{y}{h(x,t)},t\right),
\end{equation}
where $\Phi(x,\xi,t)$, satisfying the associated Euler-Lagrange equations,  is a cubic polynomial in $\xi$ given by: 
\begin{equation}\label{def:Phi}
        \Phi(x,\xi,t) = a_h(x,t)\xi^3 + b_h(x,t)\xi^2 + c( x,t)\xi .
\end{equation}
This equation is supplemented with the following boundary conditions at the top and the bottom boundaries of the reference domain,
\begin{align*}
      \Phi(x,0,t) =0,\quad
        \Phi_{\xi\xi}(x,0,t)-\lambda_b\Phi_{\xi}(x,0,t)=0 ,\\
          \Phi(x,1,t) =1, \quad
        \Phi_{\xi\xi}(x,1,t)+\lambda_s\Phi_\xi(x,1,t)=0. 
\end{align*}
Solving the system \eqref{def:Phi} for the boundary conditions above, we obtain the coefficients as follows:
\begin{equation}\label{def:abc}
    \begin{cases}
    a_h(x,t)=-\frac{2(\lambda_s + \lambda_b + \lambda_s\lambda_b)}{\lambda_s\lambda_b + 4(\lambda_s + \lambda_b) +12}(x,t), \\
    b_h(x,t)=\frac{3\lambda_b(\lambda_s+2)}{\lambda_s\lambda_b + 4(\lambda_s + \lambda_b) +12}(x,t),\\
    c_h(x,t)= \frac{6(\lambda_s+2)}{\lambda_s\lambda_b + 4(\lambda_s + \lambda_b) +12}(x,t),
\end{cases}
\end{equation}
where the functions $\lambda_s,\lambda_b$ are chosen to be,
\begin{equation}\label{def:lambdas}
        \lambda_s (x,t) = \frac{h(x,t)}{\beta_s}, \quad\text{ and}\quad 
        \lambda_b (x,t)=\frac{h(x,t)}{\beta_b}.
\end{equation}
We note here that while the minimization problem leads to a different set of values for $\lambda_s,\lambda_b$, some simplifications lead to \eqref{def:lambdas}, which are sufficient for our purposes.

We {\bf summarize} that, by construction, the fluid test function $\bfphi = \nabla^\perp \psi$, is given, for any $t\geq$, $(x,y)\in\overline{F_h(t)}$, by 
\begin{equation}\label{def:phi}
    \bfphi(x,y,t) = \left[-\partial_y\left(h(x,t)\Phi\left(x,\frac{y}{h(x,t)},t\right)\right), \partial_x\left(h(x,t)\Phi\left(x,\frac{y}{h(x,t)},t\right)\right)\right],
\end{equation}
where $\Phi$ is defined in \eqref{def:Phi}. We recall that the stream-function $\psi$, defined in \eqref{def:streamfunction} satisfies the boundary conditions \eqref{stream_bot} and \eqref{stream_top}.

Now, we strategically choose the structure test function $\eta$ as,
\begin{align}\label{def:eta}
    \eta (x,t) = \partial_x h(x,t).
\end{align}
Note, thanks to Theorem \ref{thm:regularity}, that this is valid choice of test function for the structure subproblem.
The main motivation behind this choice is that, after spatial integration (see \eqref{def:streamfunction}) the function $\eta$ yields $h$ which is used to counter/cancel the factors of $h$ that appear in the denominator of the test function $\bfphi$ and its derivatives.

Recall that, by construction, we know that the impermeability conditions 
$\bfphi|_{\Gamma_h}\cdot \bn^h=(0,\eta)\cdot \bn^h$ and $\phi_2|_{\Gamma_b}=0$ are satisfied.
We will next evaluate the behavior of this test function $\bfphi$ at $\Gamma_{in/out}$, the inlet and the outlet parts  of the fixed boundary.

For that purpose, we introduce the following notation:
\begin{equation*}
    \Phi_i = \partial_{v_i}\Phi(v_1,v_2), \quad i=1,2,
\end{equation*}
where for $i=1$ we take the derivative with respect to the first variable ($v_1$), and likewise when $i=2$ we take derivative with respect to the second variable ($v_2$). 

For simplicity, we temporarily suppress the notation showing the dependence of $\bfphi$ on $t$ in several places and using the above notation we write, 
\begin{align*}
    \bfphi(x,y) 
    &= \Big[-h(x) \Phi_2\left(x,\frac{y}{h(x)}\right)\frac{1}{h(x)},\\
    &\hspace{1in}
    \partial_{x}h(x)\Phi\left(x,\frac{y}{h(x)}\right)+h(x)\left( \Phi_1\left(x,\frac{y}{h(x)}\right)  + \partial_{x}h(x)\Phi_2\left(x,\frac{y}{h(x)}\right)\frac{-y}{h(x)^2}
    \right)\Big] \\
    &=\left[ -\Phi_2\left(x,\frac{y}{h(x)}\right),
    \partial_{x}h(x)\Phi\left(x,\frac{y}{h(x)}\right) + h(x)\Phi_1\left(x,\frac{y}{h(x)}\right) - \partial_{x}h(x)\Phi_2\left(x,\frac{y}{h(x)}\right)\frac{y}{h(x)}
    \right] \\
    &= [\phi_1(x,y), \phi_2(x,y)].
\end{align*}
Note that at the inlet and the outlet boundaries, due to \eqref{bc_h}, we have
$$\bfphi(0,y,t)= \left(-\Phi_2\left(0,\frac{y}H\right),H\Phi_1\left(0,\frac{y}H\right) \right),\quad\text{ and }\quad \bfphi(L,y,t)= \left(-\Phi_2\left(L,\frac{y}H\right),H\Phi_1\left(L,\frac{y}H\right) \right).$$
We observe that the functions $a,b,c$ are periodic in $x$: 
\begin{equation}\label{abc_periodic}
   \begin{split}
        a(0,t)=a(L,t)&  = -2\frac{\beta_sH+\beta_bH+H^2}{12\beta_s\beta_b+4(\beta_s+\beta_b)H+H^2},\\
    b(0,t)=b(L,t)& = 3\frac{2\beta_sH+H^2}{12\beta_s\beta_b+4(\beta_s+\beta_b)H+H^2}\\ 
    c(0,t) = c(L,t)&= 6\frac{\beta_bH+ 2\beta_s\beta_b}{12\beta_s\beta_b+4(\beta_s+\beta_b)H+H^2}.\\
   \end{split}
\end{equation}
Using $\phi_1(0,y,t)=\phi_1(L,y,t)=-\Phi_2(0,y/H,t)$,
we also evaluate 
$$-\int_0^H\Phi_2(0,y/H,t)dy= -H\int_0^1\Phi_2(0,y,t)dy= -H(\Phi(0,1,t)-\Phi(0,0,t))=-H.$$
Moreover, we have that $\displaystyle\Phi_1\left(0,\frac{y}H\right) =\Phi_1\left(L,\frac{y}H\right)=0 $ because
\begin{align}\label{boundaryabc_derivative}
     \partial_x a(0,t)=\partial_x a(L,t)=0,\quad     
     \partial_x b(0,t)=\partial_x b(L,t)=0,\quad 
     \partial_x c(0,t)=\partial_x c(L,t)=0.
\end{align}
Hence, by construction we have
$$\int_0^H\phi_1(0,y,t)dy=\int_0^H\phi_1(L,y,t)dy=-H,\quad\text{ and } \quad\phi_2(0,y,t)=\phi_2(L,y,t)=0.$$

{Hence, $\bfphi$ as constructed satisfies the required boundary conditions at $\Gamma_{in/out}$ and $\Gamma_b$.}

Next, we will establish the regularity of the pair $(\bfphi,\eta)$ thus proving its validity as a test function pair for the weak formulation \eqref{weak_form_gen}. For that purpose, in the next section we will first obtain appropriate bounds for the function $\Phi$ defined in \eqref{def:Phi}. 

\subsection{Bounds for $\Phi$}\label{sec:Phibounds}
In this subsection we list the behavior of the derivatives of $\Phi$, which depend on the derivatives of $a_h(x,t), b_h(x,t), c_h(x,t)$. These terms are needed to obtain bounds in particular for the fluid test function $\bfphi$ constructed in the previous section in \eqref{def:phi}. 

We emphasize that, throughout the remainder of the paper, unless otherwise specified, $C>0$ will denote a generic constant depending only on the prescribed data $\beta_s, \beta_b, \mathbf{u}_0, h_0, v_0, P_{in}, P_{out}$, and independent of both the functions under consideration and the time horizon $T$.

Recall that we use the following notation
\begin{equation*}
    \Phi_i = \partial_{v_i}\Phi(v_1,v_2), \quad i=1,2.
\end{equation*}

First we summarize our findings 
that gives detailed calculations for the derivatives of the functions $a_h(x,t),b_h(x,t),c_h(x,t)$ defined in \eqref{def:abc}.
Thanks to \eqref{hbounded} we find, for some constant $C>0$ depending only on the slip-lengths $\beta_s,\beta_b$, that for any $x\in[0,L], t\in[0,T]$, we have
\begin{equation}\label{abc_behavior}
    \begin{split}
 &   |a_h(x,t)| \leq Ch(x,t), \quad  |b(x,t)| \leq Ch(x,t), \quad |c(x,t)| \leq C, \\
  & |\partial_x a_h(x,t)|+ |\partial_x b_h(x,t)| +| \partial_x c_h(x,t)| \leq  C|\partial_x h(x,t)|,\\
      & |\partial_{xx} a_h(x,t)|+ |\partial_{xx} b_h(x,t)| +| \partial_{xx} c_h(x,t)| \leq C(1+ |\partial_{xx} h(x,t)|),\\
    & |\partial_{xt} a_h(x,t)|+ |\partial_{xt} b_h(x,t)| +| \partial_{xt} c_h(x,t)| \leq C(|\partial_th(x,t)|+| \partial_{xt} h(x,t)|).
 \end{split}
\end{equation}
   
Using these computations and the energy estimate \eqref{en_ineq}, we will now proceed to find bounds for the derivatives of the function $\Phi$.

Notice that for any $t\geq 0$, we have for every $x\in [0,L],\frac{y}{h(x,t)}\leq 1$, that
\begin{align*}
 |\Phi\left(x,\frac{y}{h(x,t)},t\right)| \leq |a_h(x,t)|+|b_h(x,t)|+|c_h(x,t)| \leq C(1+h(x,t)),
\end{align*}
for some constant $C$ depending only on $\beta_b,\beta_s$.
Thanks to \eqref{hbounded} we can then conclude, for some constant $C>0$ depending only on the given pressure data, initial data and the slip coefficients $\beta_s,\beta_b$, that
\begin{align}\label{phi_bound}
     \sup_{(x,y)\in \overline{F_h(t)},\, t\geq 0}  |\Phi\left(x,\frac{y}{h(x,t)},t\right)| \leq C .
\end{align}
Similarly, we observe the same behavior for the derivatives $\Phi_2,\Phi_{22},\Phi_{222}$ and obtain,
\begin{equation}
\begin{split}\label{phi_bound2h}
    &\left|\Phi_2\left(x,\frac{y}{h(x,t)},t\right)\right|
     \lesssim       |a_h(x,t)|+|b_h(x,t)|+|c_h(x,t)|\leq C      \left[ 1+h(x,t)\right] \leq C, \\
 &  \left[ 
     \left|\Phi_{22}\left(x,\frac{y}{h(x,t)},t\right)\right|
     +\left|\Phi_{222}\left(x,\frac{y}{h(x,t)},t\right)\right|
     \right]   \lesssim       |a_h(x,t)|+|b_h(x,t)|\leq     C     h(x,t).
\end{split}
\end{equation}

Next, due to \eqref{abc_behavior} and the bounds \eqref{hbounded} we find the following bounds for the derivatives $\Phi$,
\begin{equation}\label{phi_bound1}
    \begin{split}
           \sup_{(x,y)\in \overline{F_h(t)},\, t\geq 0} & \left[ \left|\Phi_1\left(x,\frac{y}{h(x,t)},t\right)\right|
     +\left|\Phi_{12}\left(x,\frac{y}{h(x,t)},t\right)\right|
     +\left|\Phi_{122}\left(x,\frac{y}{h(x,t)},t\right)\right|\right]\\
     & \leq      \sup_{(x,y)\in \overline{F_h(t)},\, t\geq 0}  |\partial_{x}a_h(x,t)|+|\partial_{x}b_h(x,t)|+|\partial_{x}c_h(x,t)|\\
     &\leq      \sup_{(x,y)\in \overline{F_h(t)},\, t\geq 0}  C      |\partial_xh(x,t)| \leq C.
    \end{split}
\end{equation}
Next, using \eqref{abc_behavior} we find, for any $(x,y)\in {F_h(t)}, t\geq 0$, that
\begin{equation}\label{phi_bound11}
    \begin{split}
        \left[ \left|\Phi_{11}\left(x,\frac{y}{h(x,t)},t\right)\right|
     +\left|\Phi_{112}\left(x,\frac{y}{h(x,t)},t\right)\right|\right]&
     \leq     |\partial_{xx}a_h(x,t)|+|\partial_{xx}b_h(x,t)|+|\partial_{xx}c_h(x,t)|\\
     &\leq     C   \left( 1 + |\partial_{xx} h(x,t)|\right).
    \end{split}
\end{equation}
 
This completes the derivation of bounds for the derivatives of $\Phi$. Using these estimates, we will establish appropriate bounds for the fluid test function $\bfphi$ in the next section.
\subsection{Bounds for the fluid test function $\bfphi$}\label{sec:phibounds}
The main aim of this section is to find appropriate bounds for $\bfphi$ and prove the validity of $\bfphi$ as a fluid test function. These bounds will be employed in the following subsection, Section \ref{sec:conclusion6}, where we will apply the test function and estimate each term appearing in the weak form.

We recall that our definition of $\bfphi$, by construction, is,
\begin{align*}
    \bfphi(x,y,t) 
     &=\left[ -\Phi_2\left(x,\frac{y}{h(x)}\right),
    \partial_{x}h(x)\Phi\left(x,\frac{y}{h(x)}\right) + h(x)\Phi_1\left(x,\frac{y}{h(x)}\right) - \partial_{x}h(x)\Phi_2\left(x,\frac{y}{h(x)}\right)\frac{y}{h(x)}
    \right] \\
    &= [\phi_1(x,y), \phi_2(x,y)],
\end{align*}
where we recall that $\Phi$ is defined in \eqref{def:Phi} in terms of the functions $a,b,c$ given in \eqref{def:abc}.

We first note that, due to \eqref{phi_bound}, \eqref{phi_bound2h}, and \eqref{phi_bound1}, we have
\begin{align}\label{phi_boundC}
      \sup_{(x,y)\in \overline{F_h(t)},\, t\geq 0} |\bfphi(x,y,t)|\leq C.
\end{align}

To find bounds for $\|\nabla \bfphi\|_{L^2({F_h(t)})}$ we first compute the spatial derivatives of the first component $\phi_1$:
\begin{equation}
\begin{split}\label{phi1_derivative}
     \partial_x \phi_1 (x,y)=- \partial_y\phi_2 (x,y)=
        &= \partial_x\left[{ -}\Phi_2\left(x,\frac{y}{h(x)}\right) \right]\\
       & = { -}\Phi_{21}\left(x,\frac{y}{h(x)}\right) + \partial_{x}h(x) \Phi_{22}\left(x,\frac{y}{h(x)}\right)\frac{y}{h(x)^2},\\
\partial_y\phi_1 (x,y)
        &= \partial_y\left[{ -}\Phi_2\left(x,\frac{y}{h(x)}\right)\right] 
        = { -}\Phi_{22}\left(x,\frac{y}{h(x)}\right)\frac{1}{h(x)}.
\end{split}
\end{equation}
Next, we compute the spatial derivatives of the second component $\phi_2$ and obtain,
\begin{align*}
       \partial_x \phi_2 (x,y)
        &= \partial_x\left[\partial_{x}h(x)\Phi\left(x,\frac{y}{h(x)}\right) 
        + h(x)\Phi_1\left(x,\frac{y}{h(x)}\right) 
        - \partial_{x}h(x)\Phi_2\left(x,\frac{y}{h(x)}\right)\frac{y}{h(x)} \right]\\
        &= \partial_{xx}h(x)\Phi\left(x,\frac{y}{h(x)}\right) 
        + 2\partial_{x}h(x)\left( \Phi_1\left(x,\frac{y}{h(x)}\right) \right)\\
        &\quad 
        + h(x) \Phi_{11}\left(x,\frac{y}{h(x)}\right) 
        - \partial_{x}h(x)\Phi_{12}\left(x,\frac{y}{h(x)}\right)\frac{y}{h(x)}-\partial_{xx}h(x)\Phi_2\left(x,\frac{y}{h(x)}\right) \frac{y}{h(x)} \\
        &\quad -
        \partial_{x}h(x)\left(\Phi_{21}\left(x,\frac{y}{h(x)}\right) 
        - \partial_{x}h(x)\Phi_{22}\left(x,\frac{y}{h(x)}\right)\frac{y}{h(x)^2}\right)\frac{y}{h(x)}.
\end{align*}

Now we will apply the bounds \eqref{phi_bound}-\eqref{phi_bound11} for the derivatives of $\Phi$ found in the previous section, and the Lipschitz bounds \eqref{hbounded} for $h$ to find the necessary pointwise bounds for the derivatives of $\bfphi$.  

Observe that due to \eqref{phi_bound2h}$_2$, \eqref{phi_bound1} and \eqref{hbounded} we obtain, for some constant $C>0$, that
\begin{align}\label{phi1derivatives}
    &|\partial_x\phi_1(x,y)| \leq C, 
    \quad
    |\partial_y\phi_1(x,y)| \leq  C,\qquad\text{ for any }(x,y)\in \overline{F_h(t)}, t\geq 0.
\end{align}
Furthermore, using \eqref{phi_bound2h}$_2$, \eqref{phi_bound1}, and \eqref{phi_bound11}  we obtain
\begin{align}\label{phi2derivatives}
       |\partial_x\phi_2(x,y)| \leq C(1+|\partial_{xx}h(x)|) 
       ,\qquad
    |\partial_y\phi_2(x,y)| \leq C,
    \quad\text{ for any }(x,y)\in \overline{F_h(t)}, t\geq 0.
\end{align}
We collect all the bounds above and apply the energy inequality \eqref{en_ineq} to obtain the desired estimate:
\begin{equation}\label{phi_supH1}
    \sup_{t\in [0,T]} \|\nabla \bfphi\|^2_{\bL^2({F_h(t)})} \leq C\sup_{t\in [0,T]} \int_0^L(1+|\partial_{xx}h(x,t)|^2) dx \leq C,
\end{equation}
where the constant $C>0$, independent of $T$, depends only on given data and the Lipschitz constant of $h$ as obtained in \eqref{hbounded}.

Next, we will prove that
\begin{align}\label{phit}
     \|\partial_t \bfphi\|_{L^1(0,T;\bL^2(F_h))} &\leq C ,
\end{align}
where the constant $C>0$ is independent of $T$.

For that purpose, we will find $\partial_t\bfphi$, which leads to the following straightforward yet cumbersome computations.
For the first component we obtain
\begin{align*}
    \partial_t \phi_1(x,y,t) &= 
    -\frac{3 y^2 \partial_{t} h(x,t) \partial_x a_h(x,t)}{h(x,t)^2}
    +\frac{6 y^2 \partial_{t} h(x,t) a_h(x,t)}{h(x,t)^3}
    -\frac{2 y
   \partial_{t} h(x,t) \partial_x b_h(x,t)}{h(x,t)}\\&
   +\frac{2 y \partial_{t} h(x,t) b_h(x,t)}{h(x,t)^2}
   -\partial_{t} h(x,t) \partial_x c_h(x,t).
\end{align*}
Now using \eqref{abc_behavior} and the $C^1$ bounds for $h$ in \eqref{hbounded}, we obtain 
\begin{align*}
    |\partial_t \phi_1| \leq C 
    |1+\partial_t h(x,t)|.
\end{align*}

For the second component we obtain
\begin{align*}
    \partial_t \phi_2 (x,y,t)&= \frac{y^3
   \partial_{t} h(x,t) \partial_{x} h(x,t) \partial_{xt} a_h(x,t)}{h(x,t)^2}
   -\frac{4 y^3 \partial_{t} h(x,t) \partial_{x} h(x,t)
   \partial_x a_h(x,t)}{h(x,t)^3}
   +\frac{y^3 \partial_{xt} h(x,t) \partial_x a_h(x,t)}{h(x,t)^2}\\&
   +\frac{6 y^3 \partial_{t} h(x,t) \partial_{x} h(x,t)
   a_h(x,t)}{h(x,t)^4}
   -\frac{2 y^3 \partial_{xt} h(x,t) a_h(x,t)}{h(x,t)^3}
   +\frac{y^2 \partial_{t} h(x,t) \partial_{x} h(x,t)
   \partial_{xt} b_h(x,t)}{h(x,t)}\\&
   -\frac{2 y^2 \partial_{t} h(x,t) \partial_{x} h(x,t) \partial_x b_h(x,t)}{h(x,t)^2}
   +\frac{y^2 \partial_{xt} h(x,t)
   \partial_x b_h(x,t)}{h(x,t)}
   +\frac{2 y^2 \partial_{t} h(x,t) \partial_{x} h(x,t) b_h(x,t)}{h(x,t)^3}\\&
   -\frac{y^2 \partial_{xt} h(x,t)
   b_h(x,t)}{h(x,t)^2}+y \partial_{t} h(x,t) \partial_{x} h(x,t) \partial_{xt} c_h(x,t)+y \partial_{xt} h(x,t) \partial_x c_h(x,t).
\end{align*}
We then use \eqref{abc_behavior} and \eqref{hbounded} to obtain 
\begin{align*}
    |\partial_t \phi_2| &\leq C 
    \left[ 
        |\partial_t h(x,t) ( \partial_t h(x,t) + \partial_{xt} h(x,t))| 
        +|\partial_t h(x,t) | 
        + | \partial_{xt} h(x,t)| 
    \right]
\end{align*}
Hence, using the embedding $H^1(0,L)\hookrightarrow L^4(0,L)$ in one-dimension and applying the energy inequality \eqref{en_ineq} we conclude that
\begin{align}\label{phi_t}
     \|\partial_t \bfphi\|_{L^1(0,T;\bL^2(F_h))} &\leq C \left(
     1
     +\|\partial_th\|^2_{L^2(0,T;H^1_0(0,L))}\right) \leq C,
\end{align}
where the constant $C>0$ does not depend on $T$.

\subsection{Conclusion of the proof}\label{sec:conclusion6}
 Recall that, due to Theorem \ref{thm:regularity}, we know that $\eta=\partial_xh\in L^2(0,T;H^2_0(0,L))\cap H^1(0,T;L^2(0,L))$ and that $\bfphi$ satisfies the bounds \eqref{phi_boundC}-\eqref{phit} along with appropriate boundary conditions. Furthermore, $\bfphi|_{\Gamma_h}\cdot\bn^h = (0,\eta)\cdot \bn^h$. Hence, we can use 
$(\bfphi,\eta)$,  defined in \eqref{def:phi}-\eqref{def:eta} as a test pair for the weak formulation \eqref{weak_form_gen} (technically, we must multiply these functions with the constant $\frac{-1}H$).
This yields,
\begin{multline} \label{weak_form}
\left[  \int\limits_{F_h(t)} \bu(t)\cdot\bfphi(t) +\int_0^L\partial_th(t)\zeta(t) \right]_{t=0}^{t=T}
    -\int\limits_0^T\int\limits_{{F_h(t)}}\bu\cdot\partial_t \bfphi   + 2\int\limits_0^T\int\limits_{{F_h(t)}}\nabla\bu:\nabla\bfphi
    \\+\frac12\left[ \int\limits_0^T\int\limits_{{F_h(t)}} \left[ (\bu\cdot\nabla)\bu\cdot\bfphi- (\bu\cdot\nabla)\bfphi\cdot\bu\right] -\int\limits_0^T\int\limits_{\Gamma_h(t)}(\bu\cdot\bfphi)(\bu\cdot \bn^h)\right] \\
   +\frac{1}{\beta_b}\int\limits_0^T\int\limits_{\Gamma_b}(\bu\cdot\bftau^b)\cdot (\bfphi\cdot\bftau^b) 
        + \frac{1}{\beta_s}\int\limits_0^T\int\limits_{\Gamma_h(t)}((\bu-(0,\partial_{t}h))\cdot\bftau^h)\cdot ((\bfphi-(0,\partial_xh))\cdot\bftau^h)\\
       +\int\limits_0^T\int\limits_{0}^{L}\Big(
     \alpha \partial_{xx}h\partial_{xxx}h + \gamma\partial_{xt}h\partial_{xx}h-\partial_th\partial_{tx}h \Big)=\int\limits_0^T \int\limits_{\Gamma_{in}} P_{in}\phi_1 
	-\int\limits_0^T \int\limits_{\Gamma_{out}}  P_{out}\phi_1.
\end{multline}
We write \eqref{weak_form} as
\begin{align}\label{weakformshort}
    \int_0^T P_{in}(t)\left(\int_{\Gamma_{in}} \phi_1(t) \right) dt
	-\int_0^T P_{out}(t)\left(\int_{\Gamma_{out}} \phi_1(t) \right) dt:= \sum_{i=1}^7 I_{i}(T).
\end{align}

Before continuing, we make the following observations for the left-hand side pressure terms.
As observed earlier, by construction, we have $\phi_1(0,y,t)=\phi_1(L,y,t)=-\Phi_2(0,y/H,t)$.
We evaluate 
$$-\int_0^H\Phi_2(0,y/H,t)dy= -H\int_0^1\Phi_2(0,y,t)dy= -H(\Phi(0,1,t)-\Phi(0,0,t))=-H.$$
We apply the assumption \eqref{pressure_difference} and find the following lower bound for the left hand side term of \eqref{weakformshort}:
\begin{align}\label{pressure_lowerbound}
    p_0TH \leq \int\limits_0^T \left(\left(P_{in}(t)-P_{out}(t)\right)\int\limits_{\Gamma_{in/out}}   \phi_1(t)\right)dt.
\end{align}
To come to the desired contradiction, we will now prove, for some constant $C>0$ independent of $T$, that
\begin{align}\label{claim_I}
    |\sum_{i=1}^7 I_{i}(T)| \leq C\sqrt{T}.
\end{align}

In what follows we will find bounds of the form \eqref{claim_I} for each term $I_i; i=1,...,7$ appearing \eqref{weak_form}. 

\noindent{\bf Time derivative term $I_2$:}\\
We first consider the term $I_2$ in \eqref{weak_form}. Using the energy estimate \eqref{en_ineq} and the bounds on the time derivative of $\bfphi$ given in \eqref{phi_t}, we immediately obtain
\begin{align*}
    |I_2(T)|=|\int\limits_0^T\int\limits_{{F_h(t)}} \bu \cdot \partial_t \bfphi  |   \leq C
    \|\bu\|_{L^\infty(0,T;\bL^2({F_h(t)}))} \|\partial_t \bfphi\|_{L^1(0,T;\bL^2({F_h(t)}))}\leq C.
\end{align*} 
Similarly, the first term $I_1$ is treated similarly using the energy bounds and \eqref{phi_boundC}.

\noindent{\bf Fluid dissipation term $I_3$:}\\
Next, we consider the term $I_3$ in \eqref{weak_form}.
\begin{align*}
&\left|\int_0^T\int_{{F_h(t)}}\nabla\bu:\nabla\bfphi\right|    \leq C\|\nabla\bu\|_{L^2(0,T;\bL^2({F_h}))}\|\nabla\bfphi\|_{L^2(0,T;\bL^2({F_h}))}. 
\end{align*}
Hence, thanks to \eqref{en_ineq} and \eqref{phi_supH1} we obtain for some constant $C>0$ independent of $T$ that,
\begin{align}
    \label{term_dissipation}
       | I_3(T)| \leq C\sqrt{T}.
\end{align}

\medskip

\noindent{\bf Advection term $I_4$:}\\
Next we consider the term $I_4$. An application \eqref{en_ineq}, and \eqref{phi_boundC}, yields
\begin{align*}
   | I_{4,1}(T)|= \left| \int\limits_0^T\int\limits_{{F_h(t)}}  (\bu\cdot\nabla)\bu\cdot\bfphi
    \right|
    \leq C\|\bu\|_{L^\infty(0,T;\bL^2({F_h}))}
    \|\bu\|_{L^2(0,T;\bH^1({F_h}))}
    \|\bfphi\|_{L^\infty(0,T;\bL^\infty({F_h}))} \leq C\sqrt{T}.
\end{align*}
For some universal $C$ independent of time.
Hence, thanks to \eqref{en_ineq} and \eqref{phi_supH1} we obtain for some constant $C>0$ independent of $T$ that,
\begin{align*}
       | I_{4,1}(T)| \leq C\sqrt{T}.
\end{align*}
Next, we will consider the boundary term: $\displaystyle I_{4,2}:=\int\limits_0^T\int\limits_{\Gamma_h(t)}(\bu\cdot\bfphi)(\bu\cdot \bn^h)$.
 Note that the impermeability condition states that $\displaystyle \bu|_{\Gamma_h}\cdot \bn^h=(0,\partial_th)\cdot \bn^h$. Hence, by applying the inequalities \eqref{en_ineq}, \eqref{phi_boundC}, and   \eqref{uboundary}, we obtain
\begin{align*}
    |I_{4,2}(T)| \leq \|\bu\|_{L^2(0,T;\bL^2(\Gamma_h))}\|\bfphi\|_{L^\infty(0,T;\bL^2(\Gamma_h))}\|\partial_th\|_{L^2(0,T;H^1(0,L))} \leq C.
\end{align*}
The remaining term is treated similarly using the following trace inequality at the inlet and outlet boundaries:
For any $(x,y)\in \sO:= (0,L)\times(0,1)$:
$$\tilde\bu(x,y) = h(x)^2\bu(x,yh(x)).$$
For this definition we have, for $i=1,2$, that,
\begin{align*}
&\partial_x\tilde{u}_i(x,y)=2h(x)\partial_xh(x)u_i(x,yh(x))+h(x)^2(\partial_xu_i(x,yh(x))+\partial_yu_i (x,yh(x))\partial_xh(x)y),\\ &\partial_y\tilde{u}_i(x,y)=h(x)^3\partial_yu_i(x,yh(x)) + 2h(x)\partial_xh(x)u_i(x,yh(x)).
\end{align*}
Hence, for this definition, using \eqref{hbounded} we have for any $(x,y)\in \sO$, that
\begin{align*}
    |\nabla\tilde\bu(x,y)| \leq \sup_{[0,L]}\Big(|\partial_xh|+|h|\Big)h(x)|\nabla\bu(x,yh(x))|\leq C h(x)|\nabla\bu(x,yh(x))|.
\end{align*}
Hence, for any $1\leq p<\infty$, using \eqref{hbounded} again, we obtain, 
\begin{align*}
    \|\nabla\tilde\bu\|^p_{\bL^p(\sO)}& \leq \int_0^L\int_0^1h(x)|\nabla\bu(x,yh(x))|^pdxdy= \int_0^L\int_0^{h(x)}|h(x)|^{p-1}|\nabla\bu(x,y)|^pdxdy\\
    &\leq C_{p,\sO}\|\nabla\bu\|_{\bL^p(F_h)}^p.
\end{align*}
Now, for any $1\leq p<\infty$, we apply the standard trace inequality on $\sO=(0,L)\times (0,1)$ and, thus obtain a constant $C_{\sO,p}>0$, independent of $h$ and $t$, such that
\begin{equation}\label{trace0}
    \begin{split}
H^{2-\frac1p} \|\bu(0,\cdot)\|_{L^p(0,H)}=\|\tilde\bu(0,\cdot)\|_{L^p(0,1)} &\leq C_{\sO,p}\left(\|\tilde\bu\|_{L^p(\sO)}+ \|\nabla\tilde\bu\|_{L^p(\sO)}\right)
 \\& \leq C_{\sO,p} \|\bu\|_{{\bf W}^{1,p}(F_h)}.
    \end{split}
\end{equation}
Hence, we apply \eqref{trace0} with $p=2$, the energy bounds \eqref{en_ineq}, \eqref{hbounded} and the bounds for $\bfphi$ given in \eqref{phi_boundC} as follows,
\begin{align*}
    \int\limits_0^T\int\limits_{{F_h(t)}} (\bu\cdot\nabla)\bfphi\cdot\bu &=
    \Big|\int\limits_0^T\int\limits_{{F_h(t)}} -(\bu\cdot\nabla)\bu\cdot\bfphi 
    +\int\limits_0^T\int\limits_{\Gamma_h(t)}(\bu\cdot\bq)(\bu\cdot \bn^{h}) -\int\limits_0^T\int_{\Gamma_{in}}|u_1|^2q_1+\int\limits_0^T\int_{\Gamma_{out}}|u_1|^2q_1\Big|\\
    & \leq |I_{4,1}(T)| + |I_{4,2}(T)| + \|\bu\|^{2}_{L^2(0,T;\bH^1(F_h))}\|\bfphi\|_{L^\infty([0,T]\times\Gamma_{in/out})}\\
    &\leq C\sqrt{T}.
\end{align*}
Hence, 
\begin{align}    \label{term_advection}
       | I_{4}(T)| \leq C\sqrt{T}.
\end{align}

\noindent{\bf Boundary terms with jump $I_5,I_6$}:\\
Next we consider the boundary integrals $I_5,I_6$ in \eqref{weak_form}:
$$ I_5+I_6=\frac{1}{\beta_b}\int\limits_0^T\int\limits_{\Gamma_b}(\bu\cdot\bftau^b)\cdot (\bfphi\cdot\bftau^b) 
        + \frac{1}{\beta_s}\int\limits_0^T\int\limits_{\Gamma_h(t)}((\bu-(0,\partial_{t}h))\cdot\bftau^h)\cdot ((\bfphi-(0,\partial_xh))\cdot\bftau^h).$$
We will first consider the term $I_6$. Recall that from \eqref{jumptop} we have
\begin{equation*}
 (\bfphi(x, h(x,t))-\bfeta(x,t))\cdot\bftau^h 
 = \phi_1(x, h(x,t))\sqrt{(1+(\partial_x h(x,t))^2)}.
\end{equation*}
Hence, thanks to \eqref{en_ineq},\eqref{hbounded}, and \eqref{phi_boundC}, we obtain
\begin{align*}
   &\left| \int\limits_0^T\int\limits_{\Gamma_h(t)}((\bu-(0,\partial_{t}h))\cdot\bftau^h)\cdot ((\bfphi-(0,\partial_xh))\cdot\bftau^h)\right|\\
  &\leq C\sqrt{T} \|(\bu-(0,\partial_th))\cdot\bftau^h\|_{L^2(0,T;L^2(\Gamma_h))}\|\phi_1\|_{L^\infty(0,T;L^2(\Gamma_h))} \leq C\sqrt{T}.
\end{align*}
The term $I_6$ is treated identically.

\noindent{\bf Structure dissipation terms $I_7$:}\\
To bound the final term $I_7$ in the weak form \eqref{weak_form}, we apply the energy estimate \eqref{en_ineq} and the additional structure regularity result \eqref{reg_spatial} as follows, 
\begin{align*}
&\int\limits_0^T\int\limits_0^L\partial_{xx}h(x,t)\partial_{xxx}h(x,t) = \frac12 \int\limits_0^T |\partial_{xx}h(L)|^2-|\partial_{xx}h(0)|^2=0,\\
&|\int\limits_0^T\int\limits_0^L\partial_th(x,t)\partial_{tx}h(x,t)|
   \leq C\|\partial_t h\|_{L^2(0,T;H^1(0,L))}^2 \leq C,\\
 &  \int\limits_0^T\int\limits_0^L \partial_{xt}h(x,t) \partial_{xx}h(x,t)  
 \leq C\|\partial_t h\|_{L^2(0,T;H^1(0,L))}\|h\|_{L^2(0,T;H^2(0,L))} \leq C\sqrt{T}.
\end{align*}

We have thus proved the desired bound \eqref{claim_I}. 

Combining \eqref{claim_I} with \eqref{pressure_lowerbound}, we obtain, 
\begin{align}
      p_0TH \leq \int\limits_0^T \left(\left(P_{in}(t)-P_{out}(t)\right)\int\limits_{\Gamma_{in/out}}   \phi_1(t)\right)dt \leq    |\sum_{i=1}^9 I_{i}(T)| \leq C\sqrt{T},
   \end{align}
which leads us to the desired contradiction of \eqref{hypothesis} for large $T$.
This completes the proof of Theorem \ref{mainresult}.

\section*{Acknowledgment}
The authors would like to acknowledge the support by the Applied Mathematics Department at the University of Washington provided for Nash Ward, and the support by the National Science Foundation grant DMS-2553666 (transferred from DMS-2407197) awarded to Krutika Tawri.

\bibliographystyle{plain}
\bibliography{Nash}

@article {CDEG,
    AUTHOR = {Chambolle, A. and Desjardins, B. and Esteban, M.
              J. and Grandmont, C.},
     TITLE = {Existence of weak solutions for the unsteady interaction of a
              viscous fluid with an elastic plate},
   JOURNAL = {J. Math. Fluid Mech.},
  FJOURNAL = {Journal of Mathematical Fluid Mechanics},
    VOLUME = {7},
      YEAR = {2005},
    NUMBER = {3},
     PAGES = {368--404},
      ISSN = {1422-6928},
   MRCLASS = {35Q35 (35D05 35Q30 74F10 76D03)},
  MRNUMBER = {2166981},
MRREVIEWER = {Changxing Miao},
}

@article {KTZ10,
    AUTHOR = {Kukavica, I. and Tuffaha, A. and Ziane, M.},
     TITLE = {Strong solutions for a fluid structure interaction system},
   JOURNAL = {Adv. Differential Equations},
  FJOURNAL = {Advances in Differential Equations},
    VOLUME = {15},
      YEAR = {2010},
    NUMBER = {3-4},
     PAGES = {231--254},
      ISSN = {1079-9389},
   MRCLASS = {35Q35 (74F10 76D05)},
  MRNUMBER = {2588449},
MRREVIEWER = {Michael\ J.\ Carley},
}

@incollection {F03,
    AUTHOR = {Feireisl, E.},
     TITLE = {On the motion of rigid bodies in a viscous incompressible
              fluid},
      NOTE = {Dedicated to Philippe B\'enilan},
   JOURNAL = {J. Evol. Equ.},
  FJOURNAL = {Journal of Evolution Equations},
    VOLUME = {3},
      YEAR = {2003},
    NUMBER = {3},
     PAGES = {419--441},
      ISSN = {1424-3199,1424-3202},
   MRCLASS = {35Q30 (76D03 76D05)},
  MRNUMBER = {2019028},
MRREVIEWER = {Luigi\ Carlo\ Berselli},
}

@article {SST02,
    AUTHOR = {San Mart\'in, J. A. and Starovoitov, V. and
              Tucsnak, M.},
     TITLE = {Global weak solutions for the two-dimensional motion of
              several rigid bodies in an incompressible viscous fluid},
   JOURNAL = {Arch. Ration. Mech. Anal.},
  FJOURNAL = {Archive for Rational Mechanics and Analysis},
    VOLUME = {161},
      YEAR = {2002},
    NUMBER = {2},
     PAGES = {113--147},
      ISSN = {0003-9527,1432-0673},
   MRCLASS = {35Q35 (35D05 35Q30 35Q72 76D03)},
  MRNUMBER = {1870954},
MRREVIEWER = {Denis\ Serre},
       DOI = {10.1007/s002050100172},
       URL = {https://doi.org/10.1007/s002050100172},
}

@article {CS10,
    AUTHOR = {Cheng, C. H. A. and Shkoller, S.},
     TITLE = {The interaction of the 3{D} {N}avier-{S}tokes equations with a
              moving nonlinear {K}oiter elastic shell},
   JOURNAL = {SIAM J. Math. Anal.},
  FJOURNAL = {SIAM Journal on Mathematical Analysis},
    VOLUME = {42},
      YEAR = {2010},
    NUMBER = {3},
     PAGES = {1094--1155},
      ISSN = {0036-1410,1095-7154},
   MRCLASS = {74F10 (35Q30 35Q74 74H20 74H25 74K25 76D05)},
  MRNUMBER = {2644917},
MRREVIEWER = {Natalia\ B.\ Chinchaladze},
    
}

@article {CS06,
    AUTHOR = {Coutand, D. and Shkoller, S.},
     TITLE = {The interaction between quasilinear elastodynamics and the
              {N}avier-{S}tokes equations},
   JOURNAL = {Arch. Ration. Mech. Anal.},
  FJOURNAL = {Archive for Rational Mechanics and Analysis},
    VOLUME = {179},
      YEAR = {2006},
    NUMBER = {3},
     PAGES = {303--352},
      ISSN = {0003-9527,1432-0673},
   MRCLASS = {74F10 (35Q30 35Q72 74B20 74H20 74H25 76D05)},
  MRNUMBER = {2208319},
MRREVIEWER = {Beno\^{i}t\ P.\ Desjardins},
}

@article {LR14,
    AUTHOR = {Lengeler, D. and Ru\v{z}i\v{c}ka, M.},
     TITLE = {Weak solutions for an incompressible {N}ewtonian fluid
              interacting with a {K}oiter type shell},
   JOURNAL = {Arch. Ration. Mech. Anal.},
  FJOURNAL = {Archive for Rational Mechanics and Analysis},
    VOLUME = {211},
      YEAR = {2014},
    NUMBER = {1},
     PAGES = {205--255},
      ISSN = {0003-9527,1432-0673},
   MRCLASS = {35Q35 (35D30 35Q74 74F10 74K25 76D05)},
  MRNUMBER = {3147436},
MRREVIEWER = {Zhaoyin\ Xiang},
}

@article {MC16,
    AUTHOR = {Muha, B. and {\v C}ani\'c, S.},
     TITLE = {Existence of a weak solution to a fluid-elastic structure
              interaction problem with the {N}avier slip boundary condition},
   JOURNAL = {J. Differential Equations},
  FJOURNAL = {Journal of Differential Equations},
    VOLUME = {260},
      YEAR = {2016},
    NUMBER = {12},
     PAGES = {8550--8589},
}

@article{MC19,
    AUTHOR = {Muha, B. and \v{C}ani\'{c}, S.},
     TITLE = {A generalization of the {A}ubin-{L}ions-{S}imon compactness
              lemma for problems on moving domains},
   JOURNAL = {J. Differential Equations},
  FJOURNAL = {Journal of Differential Equations},
    VOLUME = {266},
      YEAR = {2019},
    NUMBER = {12},
     PAGES = {8370--8418},
      ISSN = {0022-0396},
   MRCLASS = {35Q30 (35B45 35K90 35R37)},
  MRNUMBER = {3944259},
MRREVIEWER = {Michael J. Carley},
}

@misc{BKLM25,
      title={A no-contact result for a plate-fluid interaction system in dimension three}, 
      author={M. Bukal and I. Kukavica and L. Li and B. Muha},
      year={2025},
      note={arXiv:2510.09992},
      archivePrefix={},
      primaryClass={math.AP},
     
}

@misc{MR26,
      title={Existence of weak solutions for incompressible fluid-Koiter shell interactions with Navier slip boundary condition}, 
      author={C. Mindril\v{a} and A. Roy},
      year={2026},
      note={arXiv:2602.20016},
      archivePrefix={},
      primaryClass={math.AP},
}

@misc{CKS26,
      title={Fluid-Structure interactions with Navier- and full-slip boundary conditions}, 
      author={A. Cesik and M. Kampschulte and S. Schwarzacher},
      year={2026},
      note={arXiv:2603.12030},
      archivePrefix={},
      primaryClass={math.AP},
     
}

@article {T24,
    AUTHOR = {Tawri, K.},
     TITLE = {A stochastic fluid-structure interaction problem with the
              {N}avier-slip boundary condition},
   JOURNAL = {SIAM J. Math. Anal.},
  FJOURNAL = {SIAM Journal on Mathematical Analysis},
    VOLUME = {56},
      YEAR = {2024},
    NUMBER = {6},
     PAGES = {7508--7544},
      ISSN = {0036-1410,1095-7154},
   MRCLASS = {60H15 (35A01 35D30 35Q30 35R60 74F10)},
  MRNUMBER = {4823178},
MRREVIEWER = {Martin\ Ondrej\'at},
}

@article {MC13,
    AUTHOR = {Muha, B. and  \v{C}ani\'{c}, S.},
     TITLE = {Existence of a weak solution to a nonlinear fluid-structure
              interaction problem modeling the flow of an incompressible,
              viscous fluid in a cylinder with deformable walls},
   JOURNAL = {Arch. Ration. Mech. Anal.},
  FJOURNAL = {Archive for Rational Mechanics and Analysis},
    VOLUME = {207},
      YEAR = {2013},
    NUMBER = {3},
     PAGES = {919--968},
      ISSN = {0003-9527},
   MRCLASS = {35Q35 (35D30 76D05 76Z05 92C35)},
  MRNUMBER = {3017292},
MRREVIEWER = {Nader Masmoudi},
}

@incollection {NP10,
    AUTHOR = {Neustupa, J. and Penel, P},
     TITLE = {A weak solvability of the {N}avier-{S}tokes equation with
              {N}avier's boundary condition around a ball striking the wall},
 BOOKTITLE = {Advances in mathematical fluid mechanics},
     PAGES = {385--407},
 PUBLISHER = {Springer, Berlin},
      YEAR = {2010},
      ISBN = {978-3-642-04067-2},
   MRCLASS = {35Q30 (35D30 76D03 76D05)},
  MRNUMBER = {2665044},
MRREVIEWER = {Isabelle\ Gruais},
}

@article {CMT26,
  author = {\v{C}ani\'{c}, S. and Muha, B. and Tawri, K.},
  title = {Existence and Regularity Results for a Nonlinear Fluid-structure Interaction Problem with Three-dimensional Structural Displacement},
  journal = {to appear in {SIAM J. Math. Anal.}},
  note= {preprint at arXiv:2409.06939},
  year = {2026},
  }

@book {CKMT25,
    AUTHOR = {\v{C}ani\'{c}, S. and Kuan, J. and Muha, B. and
              Tawri, K.},
     TITLE = {Deterministic and {S}tochastic {F}luid-{S}tructure
              {I}nteraction},
    SERIES = {Advances in Mathematical Fluid Mechanics},
 PUBLISHER = {Birkh\"auser/Springer, Cham},
      YEAR = {2025},
     PAGES = {xvii+613},
      ISBN = {978-3-032-00897-8; 978-3-032-00898-5},
   MRCLASS = {74-01 (35 74B05 74F10 74Hxx 74S60 76Dxx)},
  MRNUMBER = {5033898},
}

@article {GH12,
    AUTHOR = {G\'erard-Varet, D. and Hillairet, M.},
     TITLE = {Computation of the drag force on a sphere close to a wall: the
              roughness issue},
   JOURNAL = {ESAIM Math. Model. Numer. Anal.},
  FJOURNAL = {ESAIM. Mathematical Modelling and Numerical Analysis},
    VOLUME = {46},
      YEAR = {2012},
    NUMBER = {5},
     PAGES = {1201--1224},
      ISSN = {2822-7840,2804-7214},
   MRCLASS = {76D07 (35Q35 74F10)},
  MRNUMBER = {2916378},
MRREVIEWER = {Mark\ Thompson},
       DOI = {10.1051/m2an/2012001},
       URL = {https://doi.org/10.1051/m2an/2012001},
}

@article {GH_slip,
    AUTHOR = {G\'erard-Varet, David and Hillairet, Matthieu and Wang, Chao},
     TITLE = {The influence of boundary conditions on the contact problem in
              a 3{D} {N}avier-{S}tokes flow},
   JOURNAL = {J. Math. Pures Appl. (9)},
  FJOURNAL = {Journal de Math\'ematiques Pures et Appliqu\'ees. Neuvi\`eme
              S\'erie},
    VOLUME = {103},
      YEAR = {2015},
    NUMBER = {1},
     PAGES = {1--38},
      ISSN = {0021-7824,1776-3371},
   MRCLASS = {35Q35 (35B44 74F10 76D03 76D05)},
  MRNUMBER = {3281946},
MRREVIEWER = {Byungsoo\ Moon},
}

@article{GH_noslip,
    title={Existence of global strong solutions to a beam–fluid interaction system}, 
    volume={220}, 
    DOI={10.1007/s00205-015-0954-y}, 
    number={3}, 
     JOURNAL = {Arch. Ration. Mech. Anal.},
  FJOURNAL = {Archive for Rational Mechanics and Analysis},
    author={Grandmont, C. and Hillairet, M.}, 
    year={2016}, 
    pages={1283-1333}}

@article {GH14,
    AUTHOR = {G\'erard-Varet, D. and Hillairet, M.},
     TITLE = {Existence of weak solutions up to collision for viscous
              fluid-solid systems with slip},
   JOURNAL = {Comm. Pure Appl. Math.},
  FJOURNAL = {Communications on Pure and Applied Mathematics},
    VOLUME = {67},
      YEAR = {2014},
    NUMBER = {12},
     PAGES = {2022--2075},
      ISSN = {0010-3640,1097-0312},
   MRCLASS = {35Q30 (35D30 74F10 76D05)},
  MRNUMBER = {3272367},
MRREVIEWER = {Emil\ Wiedemann},
}

@incollection {S04,
    AUTHOR = {Starovoitov, V.},
     TITLE = {Behavior of a rigid body in an incompressible viscous fluid
              near a boundary},
 BOOKTITLE = {Free boundary problems ({T}rento, 2002)},
    SERIES = {Internat. Ser. Numer. Math.},
    VOLUME = {147},
     PAGES = {313--327},
 PUBLISHER = {Birkh\"auser, Basel},
      YEAR = {2004},
      ISBN = {3-7643-2193-8},
   MRCLASS = {76D99 (35Q35)},
  MRNUMBER = {2044583},
MRREVIEWER = {Beno\^it\ P.\ Desjardins},
}

@article {GSST22,
    AUTHOR = {Gravina, G. and Schwarzacher, S. and Sou\v{c}ek, O. and Tůma, K.},
     TITLE = {Contactless rebound of elastic bodies in a viscous
              incompressible fluid},
   JOURNAL = {J. Fluid Mech.},
  FJOURNAL = {Journal of Fluid Mechanics},
    VOLUME = {942},
      YEAR = {2022},
     PAGES = {Paper No. A34, 46},
      ISSN = {0022-1120,1469-7645},
   MRCLASS = {76D08 (76T20)},
  MRNUMBER = {4427109},
}

@article {BFFG22,
    AUTHOR = {Burman, E. and Fern\'andez, M and Frei, S. and
              Gerosa, F.},
     TITLE = {A mechanically consistent model for fluid-structure
              interactions with contact including seepage},
   JOURNAL = {Comput. Methods Appl. Mech. Engrg.},
  FJOURNAL = {Computer Methods in Applied Mechanics and Engineering},
    VOLUME = {392},
      YEAR = {2022},
     PAGES = {Paper No. 114637, 28},
      ISSN = {0045-7825,1879-2138},
   MRCLASS = {74F10 (74M15 76S05)},
  MRNUMBER = {4383066},
MRREVIEWER = {Maia\ M.\ Svanadze},
}

@article {CGH21,
    AUTHOR = {Casanova, J. and Grandmont, C. and
              Hillairet, M.},
     TITLE = {On an existence theory for a fluid-beam problem encompassing
              possible contacts},
   JOURNAL = {J. \'Ec. polytech. Math.},
  FJOURNAL = {Journal de l'\'Ecole polytechnique. Math\'ematiques},
    VOLUME = {8},
      YEAR = {2021},
     PAGES = {933--971},
      ISSN = {2429-7100,2270-518X},
   MRCLASS = {35Q35 (74F10 74K10 76D03 76D05)},
  MRNUMBER = {4237023},
}

@book {Galdi,
    AUTHOR = {Galdi, G. P.},
     TITLE = {An introduction to the mathematical theory of the
              {N}avier-{S}tokes equations},
    SERIES = {Springer Monographs in Mathematics},
   EDITION = {Second},
      NOTE = {Steady-state problems},
 PUBLISHER = {Springer, New York},
      YEAR = {2011},
     PAGES = {xiv+1018},
      ISBN = {978-0-387-09619-3},
   MRCLASS = {35Q30 (35-02 76D03 76D05 76D07)},
  MRNUMBER = {2808162},
}

@article {G08,
    AUTHOR = {Grandmont, C.},
     TITLE = {Existence of weak solutions for the unsteady interaction of a
              viscous fluid with an elastic plate},
   JOURNAL = {SIAM J. Math. Anal.},
  FJOURNAL = {SIAM Journal on Mathematical Analysis},
    VOLUME = {40},
      YEAR = {2008},
    NUMBER = {2},
     PAGES = {716--737},
      ISSN = {0036-1410,1095-7154},
   MRCLASS = {35Q35 (35D05 74F10 76D03)},
  MRNUMBER = {2438783},
MRREVIEWER = {Dmitry\ A.\ Vorotnikov},
}

@article {S23,
    AUTHOR = {Sperone, G.},
     TITLE = {Homogenization of the steady-state {N}avier-{S}tokes equations
              with prescribed flux rate or pressure drop in a perforated
              pipe},
   JOURNAL = {J. Differential Equations},
  FJOURNAL = {Journal of Differential Equations},
    VOLUME = {375},
      YEAR = {2023},
     PAGES = {1--29},
      ISSN = {0022-0396,1090-2732},
   MRCLASS = {76M50 (35B27 35M12 35Q31 76D05)},
  MRNUMBER = {4642964},
MRREVIEWER = {Florian\ Oschmann},
}

@article {HRT96,
    AUTHOR = {Heywood, J. G. and Rannacher, R. and Turek, S.},
     TITLE = {Artificial boundaries and flux and pressure conditions for the
              incompressible {N}avier-{S}tokes equations},
   JOURNAL = {Internat. J. Numer. Methods Fluids},
  FJOURNAL = {International Journal for Numerical Methods in Fluids},
    VOLUME = {22},
      YEAR = {1996},
    NUMBER = {5},
     PAGES = {325--352},
      ISSN = {0271-2091,1097-0363},
   MRCLASS = {76M10 (76D05 76M25)},
  MRNUMBER = {1380844},
MRREVIEWER = {Vitoriano\ Ruas},
}

\end{document}